\documentclass[11pt]{amsart}
\usepackage{amssymb,enumitem}
\usepackage{amsthm}
\usepackage{tikz,graphicx}
\usepackage{xcolor}
\usetikzlibrary{arrows,positioning,shapes}
\usepackage[mathscr]{euscript}
\usepackage[margin=.89in]{geometry}
\usepackage{subcaption}
\usepackage{blkarray}

\usepackage[colorlinks,citecolor=cyan,linkcolor=purple]{hyperref}

\usepackage{cleveref}

\allowdisplaybreaks

\newtheorem{corollary}{Corollary}[subsection]
\newtheorem{theorem}[corollary]{Theorem}
\newtheorem{lemma}[corollary]{Lemma}
\newtheorem{prop}[corollary]{Proposition}
\theoremstyle{definition}
\newtheorem{defn}[corollary]{Definition}
\newtheorem{example}[corollary]{Example}
\newtheorem{remark}[corollary]{Remark}

\AddToHook{env/theorem/begin}{\crefalias{corollary}{theorem}}
\AddToHook{env/lemma/begin}{\crefalias{corollary}{lemma}}
\AddToHook{env/prop/begin}{\crefalias{corollary}{prop}}
\AddToHook{env/defn/begin}{\crefalias{corollary}{defn}}
\AddToHook{env/example/begin}{\crefalias{corollary}{example}}
\AddToHook{env/remark/begin}{\crefalias{corollary}{remark}}

\numberwithin{equation}{subsection}

\newcommand{\st}{\colon}
\newcommand{\into}{\hookrightarrow}		
\newcommand{\ol}[1]{\overline{#1}}

\makeatletter
\newcommand{\mylabel}[2]{(#2)\def\@currentlabel{#2}\label{#1}}
\makeatother

\DeclareMathOperator{\at}{at}
\DeclareMathOperator{\sgn}{sgn}
\DeclareMathOperator{\Hom}{Hom}
\DeclareMathOperator{\lcm}{lcm}

\newcommand{\join}{\vee}
\newcommand{\meet}{\wedge}
\newcommand{\I}{\mathcal{I}}
\renewcommand{\P}{\mathcal{P}}
\newcommand{\rk}{\rho}
\newcommand{\F}{\mathcal{F}}
\newcommand{\Q}{\mathbb{Q}}
\newcommand{\C}{\mathbb{C}}
\newcommand{\R}{\mathbb{R}}

\newcommand{\Z}{\mathbb{Z}}
\newcommand{\A}{\mathcal{A}}
\newcommand{\nbc}{\textsc{nbc}}
\newcommand{\G}{\mathbb{G}}
\newcommand{\M}{\mathcal{M}}
\newcommand{\zero}{\hat{0}}
\newcommand{\mo}{<_{\mathrm{grlex}}}
\newcommand{\ato}{<_{\mathrm{at}}}
\newcommand{\io}{\, {}_* \!\! >}
\newcommand{\B}{\mathcal{B}}

\DeclareMathOperator{\lt}{\textsc{LT}}
\DeclareMathOperator{\lm}{\textsc{LM}}
\DeclareMathOperator{\inv}{inv}

\title[The Orlik--Solomon algebra and cohomology of complex abelian arrangements]{The Orlik--Solomon algebra of a locally geometric poset and cohomology of complex abelian arrangements}

\author{Christin Bibby}
\address{Louisiana State University, Baton Rouge, LA, USA}
\email{\url{bibby@math.lsu.edu}}

\author{Peter Ramsey}
\address{Louisiana State University, Baton Rouge, LA, USA}
\email{\url{pramse3@lsu.edu}}

\keywords{matroid, geometric lattice, abelian arrangement, Orlik--Solomon algebra, graded-commutative algebra, Gr\"obner basis, Leray spectral sequence}

\subjclass[2020]{
Primary
05E16; 
Secondary
05B35, 
13P10, 
14N20, 
52C35, 
55N30, 
55T99
}

\thanks{C.B. was supported by NSF DMS-2204299. P.R. was supported by NSF DMS-2231492}

\begin{document}

\begin{abstract}
We construct an Orlik--Solomon algebra for any locally geometric poset as a natural generalization of the one for geometric lattices. This algebra has several interesting features, including a combinatorial no-broken-circuit vector space basis that we obtain through Gr\"obner basis theory. When the poset captures the intersection data of an arrangement of certain subgroups in a complex abelian Lie group, we infuse the Orlik--Solomon algebra with topological information to compute the rational cohomology of the arrangement complement.
In particular, when the Lie group is compact, we present an explicit differential graded algebra whose cohomology is the rational cohomology of the arrangement complement.
\end{abstract}

\maketitle

\vspace{-3mm}

\tableofcontents

\vspace{-3mm}

\section{Introduction}

Due to groundbreaking work of Brieskorn \cite{brieskorn} and Orlik and Solomon \cite{OS}, the cohomology ring of the complement to a union of hyperplanes in a complex vector space has  a purely combinatorial presentation.
This means that the algebra does not depend on the linear equations defining the hyperplanes, but only on the \emph{geometric lattice} or \emph{matroid} encoding 
their intersection data.
There is thus an Orlik--Solomon algebra associated to any geometric lattice, regardless of realizability by a hyperplane arrangement.
In this combinatorial context, Orlik and Solomon's algebra enjoys several interesting features corresponding to Brieskorn's topological discoveries in the cohomology ring. 
For instance, it admits a grading by the geometric lattice and a Hilbert series formula in terms of the lattice's characteristic polynomial. Later, Jambu and Terao \cite{JT} determined a  combinatorial vector space basis in terms of the \emph{no-broken-circuits} or $\nbc$ sets, which was observed by Yuzvinsky \cite{yuz} to correspond to a Gr\"obner basis in the defining ideal.

Motivated by the intersection pattern of an \emph{abelian arrangement}, or a collection of certain subgroups in an abelian Lie group (\Cref{def:arrangement}), 
we generalize Orlik and Solomon's construction to assign a combinatorial algebra $A(\P)$ to any \emph{locally geometric poset}  $\P$ (see Definitions \ref{defn:geometric} and \ref{defn:OS}). 
The main idea of the ``locally geometric'' property is that closed intervals in the poset are geometric lattices, and this corresponds to how the subgroups in an abelian arrangement \emph{locally intersect like hyperplanes}. 
We thus take a local-to-global approach:
the Orlik--Solomon algebras associated to the local geometric lattices together form a flasque sheaf, and our algebra $A(\P)$ arises as global sections of this Orlik--Solomon sheaf (\Cref{thm:sheaf}).
This perspective was inspired by a related sheaf used in \cite{yuz-sheaf} for geometric semilattices.

We further prove that the algebra $A(\P)$ shares many of the same interesting features as in the case of geometric lattices.
Notably, we extend the notion of $\nbc$ to this context (\Cref{defn:nbc}) and establish a combinatorial $\nbc$ vector space basis for $A(\P)$ using Gr\"obner basis theory (see \Cref{thm:gb,cor:vsbasis}).
This is then used to establish a $\P$-decomposition of $A(\P)$  and a Hilbert series formula in terms of the characteristic polynomial of $\P$ (\Cref{cor:brieskorn}), as well as the recursive tool of a deletion-contraction short exact sequence (\Cref{thm:del-con}).
With the Gr\"obner basis as the driving force here, our methods differ from those used for geometric lattices in \cite{OS,JT}. 

When the locally geometric poset $\P$ captures the intersection data of a complex abelian arrangement $\A$, the cohomology of the arrangement complement can then be computed using both the combinatorial data (in the form of $A(\P)$) and the topological data (from the cohomology of the ambient space).  
To this end, we define a \emph{topological Orlik--Solomon algebra} $B(\A)$ (\Cref{def:B}) and prove that it arises as the second page in a Leray spectral sequence (\Cref{thm:Leray}).
When the Lie group is noncompact, we use the cohomology calculation of \cite{BPP} to prove that in most cases,
the algebra $B(\A)$ is isomorphic to the rational cohomology of the arrangement complement (\Cref{thm:noncpt}).
There is, unfortunately, a family of noncompact Lie groups (including $\C^\times$) where the cup product cannot be recovered from the spectral sequence (see \Cref{rmk:noncpt}), and in this case we obtain an associated graded algebra of the rational cohomology ring (\Cref{thm:noncpt}).
When the Lie group is compact, the spectral sequence has a nontrivial differential separating $B(\A)$ from the cohomology.
In this case, we provide an explicit differential graded algebra, or dga, $(B(\A),\delta)$ whose cohomology is the rational cohomology ring of the arrangement complement (\Cref{thm:cpt}). 
This greatly generalizes the dga model for  elliptic arrangements from \cite{bibby-elliptic} in two ways: managing more complicated combinatorics and allowing higher-dimensional Lie groups.

\medskip

\noindent\textbf{Outline.} 
\Cref{sec:comb} sets the combinatorial foundations needed. We review poset terminology and define locally geometric posets in \Cref{sec:lgp}, then  associated matroidal notions in \Cref{sec:matroid}. 
We introduce the notion of unicircular elements in \Cref{sec:uni} and $\nbc$ elements in \Cref{sec:nbc}, proving several technical lemmas to be used later.
\Cref{sec:del-con} focuses on the deletion and contraction operations.

\Cref{sec:OS} is centered around the Orlik--Solomon algebra. We define the algebra in \Cref{sec:OSdef}, providing also an alternative presentation and comparing with the geometric lattice case.
In \Cref{sec:gb}, we review the theory of Gr\"obner bases in graded-commutative algebras and determine a Gr\"obner basis for the Orlik--Solomon ideal.
The $\nbc$ basis, $\P$-decomposition, and Hilbert series are proven in \Cref{sec:nbcbasis}. 
In \Cref{sec:sheaf}, we define the Orlik--Solomon sheaf and compute global sections.

\Cref{sec:H^*} focuses on the cohomology of complex abelian arrangements.
Abelian arrangements are defined in \Cref{sec:arrangement}, and the topological Orlik--Solomon algebra is defined in \Cref{sec:topOS}. 
We relate the topological Orlik--Solomon algebra to the Leray spectral sequence in \Cref{sec:Leray}, then compute cohomology in \Cref{sec:noncpt} for the noncompact case and in \Cref{sec:cpt} for the compact case.

Preliminary results of this paper were announced in the extended abstract \cite{BR} in the conference proceedings of Formal Power Series and Algebraic Combinatorics (FPSAC).

\section{Combinatorial foundations}\label{sec:comb}
\subsection{Locally geometric posets}\label{sec:lgp}

Let $\P$ be a finite partially ordered set, or poset. 
Given $x\in\P$ we write 
\[\P_{\leq x}:=\{y\in\P\colon y\leq x\}\quad\textup{and}\quad\P_{\geq x}:=\{y\in\P\colon y\geq x\}.\]
For a subset $T\subseteq\P$, let the $\textbf{join}$, $\bigvee T$, be the set of minimal upper bounds, and the $\textbf{meet}$, $\bigwedge T$, be the set of maximal lower bounds. That is,
\[\bigvee T:=\min\{u\in\P\colon u\geq a,\;\forall a\in T\}\quad\textup{and}\quad\bigwedge T:=\max\{l\in\P\colon l\leq a,\;\forall a\in T\}.\]
When $T=\{x,y\}$ we write $x\join y:=\bigvee T$ and $x\meet y:=\bigwedge T$. We say that $\P$ is a \textbf{(meet-)semilattice} if for all $x,y\in\P$, $|x\meet y|=1$. If additionally $|x\join y|=1$ for all $x,y\in\P$, we say that $\P$ is a \textbf{lattice}. 
Say that $\P$ is \textbf{locally-lattice} if $\P_{\leq x}$ is a lattice for all $x\in\P$.

\begin{example}
The three posets whose Hasse diagrams are depicted in \Cref{fig:ex:P} are locally-lattice posets. None are lattices, although \Cref{fig:P1} is a semilattice.
In the poset of \Cref{fig:P2}, observe that 
$\bigvee\{1,2,4\}=c\vee 4=\{u_1,u_2\}$ and $u_1\meet u_2=\{c,x,y,z\}$.

\begin{figure}[hbt]
\begin{subfigure}[t]{.3\textwidth}
\centering
\begin{tikzpicture}
\tikzstyle{every node}=[draw,circle,fill=black,minimum size=4pt,inner sep=0pt]
\node (0) at (0,0) {};
\node (1) at (-2,1) {};
\node (2) at (-1,1) {};
\node (3) at (0,1) {};
\node (4) at (1,1) {};
\node (5) at (2,1) {};
\node (a) at (-1,2) {};
\node (b) at (2,2) {};
\node (c) at (1,2) {};
\foreach \x in {1,2,3,4,5} {\draw[-] (0)--(\x);};
\foreach \x in {1,2,3} {\draw[-] (\x)--(a);};
\foreach \x in {4,5} {\draw[-] (b)--(\x)--(c);};
\end{tikzpicture}
\caption{A locally geometric poset}
\label{fig:lgp}
\end{subfigure}
\begin{subfigure}[t]{.3\textwidth}
\centering
\begin{tikzpicture}
\node (0) at (0,0) {\scriptsize$\hat{0}$};
\node (1) at (-1.5,1) {\scriptsize $1$};
\node (2) at (-.5,1) {\scriptsize $2$};
\node (3) at (.5,1) {\scriptsize $3$};
\node (4) at (1.5,1) {\scriptsize $4$};
\node (c) at (-1.5,2) {\scriptsize $c$};
\node (x) at (-.5,2) {\scriptsize $x$};
\node (y) at (.5,2) {\scriptsize $y$};
\node (z) at (1.5,2) {\scriptsize $z$};
\node (u1) at (-1,3) {\scriptsize $u_1$};
\node (u2) at (1,3) {\scriptsize $u_2$};
\foreach \x in {1,2,3,4} {\draw[-] (0.north)--(\x.south);};
\foreach \x in {1,2,3} {\draw[-] (\x.north)--(c.south);};
\foreach \x in {c,x,y,z} {\draw[-] (u1.south)--(\x.north)--(u2.south);};
\draw[-] (1.north)--(x.south)--(4.north);
\draw[-] (2.north)--(y.south)--(4.north);
\draw[-] (3.north)--(z.south)--(4.north);
\end{tikzpicture}
\caption{A geometric poset}
\label{fig:P2}
\end{subfigure}
\begin{subfigure}[t]{.33\textwidth}
\centering
\begin{tikzpicture}[scale=1.5]
\node (0) at (0,0) {\scriptsize$\hat{0}$};
\node (1) at (-1.5,1) {\scriptsize $1$};
\node (2) at (-.5,1) {\scriptsize $2$};
\node (3) at (.5,1) {\scriptsize $3$};
\node (4) at (1.5,1) {\scriptsize $4$};
\node (p) at (-1,2) {\scriptsize $p$};
\node (q) at (0,2) {\scriptsize $q$};
\node (r) at (1,2) {\scriptsize $r$};
\foreach \x in {1,2,3,4} {\draw[-] (0.north)--(\x.south);};
\foreach \x in {1,2,3} {\draw[-] (\x.north)--(p.south);};
\draw[-] (2.north)--(q.south)--(4.north)--(r.south)--(3.north);
\end{tikzpicture}
\caption{A geometric semilattice}
\label{fig:P1}
\end{subfigure}
\caption{Examples of locally geometric posets}
\label{fig:ex:P}
\end{figure}

\end{example}

Before moving on, we state a technical lemma about joins that will be needed later.
\begin{lemma}[Associativity of join]\label{lem:triplejoin}
Let $\P$ be a locally-lattice poset. Then, for any $T\subseteq\P$ and $x\in\P$,
\[ 
\bigvee(T\cup\{x\}) = \bigcup_{y\in\vee T} (y\vee x).
\]
\end{lemma}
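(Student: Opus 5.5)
We prove the two inclusions by unwinding the definitions of $\bigvee$ as a set of minimal upper bounds. The locally-lattice hypothesis enters once: below any fixed element $u$, the set $\P_{\leq u}$ is a lattice, so any collection of elements lying under a common upper bound $u$ has a \emph{unique} least upper bound inside $\P_{\leq u}$. We write $U(S)=\{u\in\P\colon u\geq a \ \forall a\in S\}$ for the set of upper bounds, so that $\bigvee S=\min U(S)$.

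\emph{Inclusion $\subseteq$.} Let $u\in\bigvee(T\cup\{x\})$.
\begin{itemize}
\item Since $u\in U(T)$ and $\P$ is finite, some minimal element $y$ of $U(T)$ satisfies $y\leq u$, so $y\in\bigvee T$.
\item Then $u\in U(\{y,x\})$, and we claim $u$ is minimal there. Suppose $v\in U(\{y,x\})$ with $v\leq u$. Then $v\geq y\geq a$ for all $a\in T$, and $v\geq x$, so $v\in U(T\cup\{x\})$.
\item Minimality of $u$ in $U(T\cup\{x\})$ forces $v=u$. Hence $u\in y\vee x$.
\end{itemize}
(This direction does not even need the lattice hypothesis.)

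\emph{Inclusion $\supseteq$.} Let $y\in\bigvee T$ and $u\in y\vee x$. Clearly $u\in U(T\cup\{x\})$. Suppose $v\in U(T\cup\{x\})$ with $v\leq u$; we must show $v=u$. This is the main step, since we do not yet know $v\geq y$.
\begin{itemize}
\item Work in the lattice $L=\P_{\leq u}$. In $L$, the elements of $T$ have a unique join $j$, computed in $L$.
\item We have $j\leq y$, since $y\in L$ is an upper bound of $T$. Also $j\leq v$, since $v\in L$ is an upper bound of $T$.
\item Now $j\in U(T)$ in $\P$ and $j\leq y$, with $y$ minimal in $U(T)$. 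Hence $j=y$, and so $y\leq v$.
\item Therefore $v\in U(\{y,x\})$ with $v\leq u$. Minimality of $u$ in $U(\{y,x\})$ gives $v=u$.
\end{itemize}
So $u$ is minimal in $U(T\cup\{x\})$, i.e.\ $u\in\bigvee(T\cup\{x\})$.

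The only subtle point is the second inclusion, where the lattice structure of $\P_{\leq u}$ is needed to compare $v$ with $y$. Without it, $v$ could lie above a \emph{different} minimal upper bound of $T$ than $y$. One should also note the edge case $T=\emptyset$: then $\bigvee T=\min\P$, and the same argument goes through, with the join of the empty set in $L$ being its bottom element.
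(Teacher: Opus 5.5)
Your proof is correct and takes essentially the same route as the paper: in both, the key point is that because $\P_{\leq u}$ is a lattice, $y$ is the only element of $\bigvee T$ lying below $u$. You make this explicit via the join $j$ of $T$ in $\P_{\leq u}$, whereas the paper simply asserts it. The one small difference is that you prove $\bigvee(T\cup\{x\})\subseteq\bigcup_{y\in\vee T}(y\vee x)$ directly, correctly noting that this inclusion needs no lattice hypothesis, while the paper deduces it from the reverse inclusion.
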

\begin{proof}
Suppose $u\in y\vee x$ for some $y\in\bigvee T$. 
Since $u$ lies above every element of $T\cup\{x\}$, there is some $v\in\bigvee(T\cup\{x\})$ with $v\leq u$. 
Now $v$ is a common upper bound of $T$, thus $v\geq y'$ for some $y'\in\bigvee T$.
Since $\P_{\leq u}$ is a lattice, the only element of $\bigvee T$ which lies below $u$ is $y$, hence $y=y'$. 
Then minimality of $u$ among common upper bounds of $y$ and $x$ implies $u=v\in\bigvee(T\cup\{x\})$.

Conversely, suppose $u\in\bigvee(T\cup\{x\})$. 
Since $u$ lies above every element of $T$, there is some $y\in\bigvee T$ such that $u\geq y$.
Now $u$ is a common upper bound of $x$ and $y$, thus $u\geq v$ for some $v\in x\vee y$. 
As was just proven, this implies $v\in \bigvee (T\cup \{x\})$ and, by minimality, $u=v\in\bigcup_{y\in \vee T} (y\vee x)$.
\end{proof}

Continuing to establish terminology,
a poset $\P$ is \textbf{bounded below} if it has a unique minimum element denoted $\hat{0}$. Say that $\P$ is \textbf{ranked} if there is an order-preserving map $\rk:\P\to\mathbb{Z}_{\geq 0}$, such that whenever $y$ covers $x$, $\rk(y)=\rk(x)+1$.
When a ranked poset $\P$ is bounded below, we may assume that $\rho(\hat{0})=0$, let $\rk(\P)=\max\{\rk(x)\st x\in\P\}$ be the maximum rank, and let the set of \textbf{atoms} of $\P$ be
\[\at(\P):=\{a\in\P:\rho(a)=1\}.\]
Abbreviate $\at(x):=\at(\P_{\leq x})$ whenever $x\in\P$.
When every element of $\P$ is contained in the join of some set of atoms, $\P$ is called \textbf{atomic}.

\begin{defn}\label{defn:geometric}
A ranked, atomic lattice $\P$ with rank function $\rho$ is  a \textbf{geometric lattice} if $\P$ is \textit{semimodular}, that is, for every $x,y\in\P$, 
    $\rk(x)+\rk(y)\geq\rho(x\join y)+\rho(x\meet y).$
    A ranked, bounded-below poset $\P$ is called \textbf{locally geometric} if $\P_{\leq x}$ is a geometric lattice for all $x\in\P$.
    A \textbf{geometric poset} is a locally geometric poset that satisfies the global condition:
whenever $x\in\P$, $Y\subseteq\at(\P)$,  $y\in\bigvee Y$, and $\rk(x)<\rk(y)=|Y|$, there exists an $a\in Y$ such that $a\not\leq x$ and $a\join x\neq\varnothing$. 
    \end{defn}

\begin{example}\label{ex:P}
The posets whose Hasse diagrams are depicted in \Cref{fig:ex:P} are locally geometric posets. 
While \Cref{fig:P1,fig:P2} satisfy the global geometric condition, \Cref{fig:lgp} does not.
\end{example}

There are several well-known classes of locally geometric posets.
For a finite set $T$, the \textbf{Boolean lattice} $2^T$, whose elements are the subsets of $T$ ordered by inclusion, is a geometric lattice. In this case, join corresponds to union; meet corresponds to intersection; and rank corresponds to cardinality. 
A \textbf{simplicial poset} is a bounded-below poset $S$ in which $S_{\leq \alpha}\cong 2^{\at(\alpha)}$ for all $\alpha\in S$. 
As such, a simplicial poset is a locally geometric poset. 
A \textbf{geometric semilattice} is a geometric poset that is also a meet-semilattice; this is equivalent to the definition of geometric semilattice from \cite{WW}.

The primary motivation for locally geometric posets is to capture the intersection data of an arrangement of submanifolds which \emph{intersect like hyperplanes}. This includes hyperplane arrangements, toric arrangements, and more generally abelian arrangements, which are the focus of \Cref{sec:H^*} and defined in \Cref{def:arrangement} (see \Cref{ex:arrangement} and \Cref{thm:P(A)geom}).
It also includes blowups of hyperplane arrangements \cite[Prop 2.3.6]{BDF} which arise in wonderful compactifications of hyperplane arrangement complements.

We will most often consider locally geometric posets, \emph{without} requiring the stronger condition of a geometric poset. However, the posets arising from abelian arrangements in \Cref{sec:H^*} happen to be geometric, and there is one place (see \Cref{rmk:geom-joinindep}) where the global geometric condition is needed.

\subsection{Matroid preschemes}\label{sec:matroid}

While the definitions in this section and the next are primarily pulled from \cite{bibby}, we redirect emphasis for the sake of this work. 
In particular, it will be most useful for us to define matroid preschemes in terms of independence, as in \cite[Thm. 6.2]{bibby}.

The ideas here use simplicial posets, for which additional notation is needed. 
Let $S$ be a simplicial poset. Use $|\alpha|:=|\at(\alpha)|$ to denote the rank of an element $\alpha\in S$. Furthermore, for $\alpha,\beta\in S$, if $\alpha\leq \beta$ then there is a unique element $\beta\setminus \alpha\in S$ such that $(\beta\setminus \alpha)\vee \alpha\ni \beta$ and $(\beta\setminus \alpha)\wedge \alpha=\{\zero\}$. Call this element the complement of $\alpha$ in $S_{\leq\beta}$, and note it satisfies $|\beta\setminus \alpha|=|\beta|-|\alpha|$.

\begin{defn}\label{defn:matroids}
A \textbf{matroid prescheme} is a pair $M=(S,\I)$, where $S$ is a finite simplicial poset and $\I\subseteq S$ is a subset of \textbf{independent} elements satisfying:
\begin{enumerate}[label=\textbf{(I\arabic*)}, ref=\textbf{(I\arabic*)}]
\item[\mylabel{I1}{\textbf{I1}}] $\I$ is nonempty.
\item[\mylabel{I2}{\textbf{I2}}] If $\alpha,\beta\in S$ with $\alpha\leq \beta$ and $\beta\in\I$, then $\alpha\in \I$.
\item[\mylabel{I3}{\textbf{I3}}] If $\alpha,\beta\in \I$, $|\alpha|<|\beta|$, and $\gamma\in \alpha\vee \beta$, then there is an atom $b\leq \beta$ such that $b\not\leq \alpha$ and $(\alpha\vee b)\cap S_{\leq \gamma}\subseteq \I$.
\item[\mylabel{I4}{\textbf{I4}}] If $\alpha,\beta\in S$, $\nu\in\max(\I\cap S_{\leq \alpha})$, and $\nu\leq \beta$, then $\alpha\vee \beta\neq\varnothing$.
\end{enumerate}
\end{defn}

Roughly speaking, conditions (I1) and (I2) say that $\I$ is an \textit{order ideal} of $S$; condition (I3) guarantees a \textit{local} matroid structure on each Boolean lattice $S_{\leq \gamma}$; and condition (I4) ensures reasonable gluing of these local matroids. There is a stronger (global) version of (I3) given in \cite[Thm. 6.2]{bibby} that one can adopt to define a \textit{matroid scheme}, related to the global geometric condition in the last section, but this is not used in the present work. 

Whenever the simplicial poset is a lattice (resp. semilattice), a matroid prescheme $(S,\I)$ is equivalent to a matroid (resp. semimatroid as in \cite{ardila}). 
In any matroid prescheme, the restriction of $\I$ to a Boolean lattice $S_{\leq \gamma}$ defines the independent sets of a matroid on the ground set $\at(\gamma)$.

\begin{example}\label{ex:M2}
Consider the simplicial poset $S$ whose Hasse diagram is depicted in \Cref{fig:ex2}. The pair $(S,\I)$, where $\I$ is the collection of elements outlined in rectangles, forms a matroid prescheme. 
\end{example}

\begin{figure}[ht]
\centering
\begin{tikzpicture}[scale=1.7]
\node[draw,rectangle,very thick] (0) at (0,.3) {\scriptsize$\hat{0}$};
\foreach \x in {1,2,3,4}
{\node[draw,rectangle,very thick] (\x) at (-2.5+\x,1) {\scriptsize\x};
\draw[-] (0.north)--(\x.south);};
\node[draw,rectangle,very thick] (12) at (-2.5,1.8) {\scriptsize$(c,12)$};
\node[draw,rectangle,very thick] (13) at (-1.5,1.8) {\scriptsize$(c,13)$};
\node[draw,rectangle] (23) at (-.5,1.8) {\scriptsize$(c,23)$};
\node[draw,rectangle,very thick] (14) at (.5,1.8) {\scriptsize$(x,14)$};
\node[draw,rectangle,very thick] (24) at (1.5,1.8) {\scriptsize$(y,24)$};
\node[draw,rectangle,very thick] (34) at (2.5,1.8) {\scriptsize$(z,34)$};
\node[draw,rectangle,very thick] (1124) at (-3,3) {\scriptsize$(u_1,124)$};
\node[draw,rectangle,very thick] (1134) at (-2,3) {\scriptsize$(u_1,134)$};
\node[draw,rectangle] (1234) at (-1,3) {\scriptsize$(u_1,234)$};
\node (123) at (0,3) {\scriptsize$(c,123)$};
\node[draw,rectangle,very thick] (2124) at (1,3) {\scriptsize$(u_2,124)$};
\node[draw,rectangle,very thick] (2134) at (2,3) {\scriptsize$(u_2,134)$};
\node[draw,rectangle] (2234) at (3,3) {\scriptsize$(u_2,234)$};
\node (u1) at (-1.5,3.7) {\scriptsize$(u_1,1234)$};
\node (u2) at (1.5,3.7) {\scriptsize$(u_2,1234)$};
\foreach \x in {1124,1134,1234,123} {\draw[-] (u1.south)--(\x.north);};
\foreach \x in {2124,2134,2234,123} {\draw[-] (u2.south)--(\x.north);};
\foreach \x in {12,13,14} {\draw[-] (1.north)--(\x.south);};
\foreach \x in {12,23,24} {\draw[-] (2.north)--(\x.south);};
\foreach \x in {13,23,34} {\draw[-] (3.north)--(\x.south);};
\foreach \x in {14,24,34} {\draw[-] (4.north)--(\x.south);};
\foreach \x in {1124,123,2124} {\draw[-] (12.north)--(\x.south);};
\foreach \x in {1134,123,2134} {\draw[-] (13.north)--(\x.south);};
\foreach \x in {1234,123,2234} {\draw[-] (23.north)--(\x.south);};
\foreach \x in {1124,2124,1134,2134} {\draw[-] (14.north)--(\x.south);};
\foreach \x in {1124,2124,1234,2234} {\draw[-] (24.north)--(\x.south);};
\foreach \x in {1134,2134,1234,2234} {\draw[-] (34.north)--(\x.south);};
\end{tikzpicture}
\caption{An example of a matroid scheme, with the independent elements in the simplicial poset outlined in rectangles (see \Cref{ex:M2}). 
The bolder rectangles mark those independent elements which are $\nbc$ (see later \Cref{ex:nbc}).}
\label{fig:ex2}
\end{figure}

\begin{defn}\label{def:rank}
The \textbf{rank function} $\rho=\rho_M\colon S\to\Z_{\geq0}$ of a matroid prescheme $M=(S,\I)$ is 
\[\rho(\alpha):=\max\{|\nu|: \nu\in\I\cap S_{\leq \alpha}\}.\]
\end{defn}

As stated in \cite[Prop 4.8]{bibby}, the rank function on a matroid prescheme $M=(S,\I)$ satisfies the following useful condition:
\begin{enumerate}
\item[\mylabel{M4'}{\textbf{M4'}}] If $\alpha,\beta\in S$, $\nu\in\alpha\meet\beta$, and $\rk(\alpha)=\rk(\nu)$, then $\alpha\join\beta\neq\varnothing$ and $\rk(\gamma)=\rk(\beta)$ for any $\gamma\in\alpha\join\beta$.
\end{enumerate}

The next definitions are equivalent to those in \cite[Defns 5.5, 8.1, 11.4]{bibby}.  

\begin{defn}\label{def:c-f}
Let $M=(S,\I)$ be a matroid prescheme.
A \textbf{circuit} is a minimal element of $S\setminus \I$.
An element $x\in S$ is called a \textbf{flat} if, for all circuits $\zeta$: if there exists an element $\alpha\leq x$ such that $\zeta$ covers $\alpha$, then $\zeta\leq x$. The \textbf{poset of flats} is the subposet of $S$ whose elements are the flats of $M$.
Say that $M$ is \textbf{simple} if every atom is a flat and not a circuit.
\end{defn}

Let us quickly record the useful \emph{circuit elimination axiom} from \cite[Thm 8.3]{bibby}.
\begin{enumerate}
\item[\mylabel{C3}{\textbf{C3}}] 
if $\zeta_1,\zeta_2$ are distinct circuits, $\omega\in \zeta_1\join\zeta_2$, and $a\in\at(\zeta_1)\cap\at(\zeta_2)$, then there exists a circuit $\xi$ with $\xi\leq\omega\setminus a$.
\end{enumerate}

\begin{example}\label{ex:CF}
    In \Cref{fig:ex2}, $(c,123)$ is the only circuit.
    The flats are given by $\hat{0}$, the atoms $1$, $2$, $3$, and $4$, the rank-$2$ elements $(x,14)$, $(y,24)$, and $(z,34)$, as well as $(c,123)$, $(u_1,1234)$, and $(u_2,1234)$.
    Notice that the poset of flats of this matroid prescheme is isomorphic to the geometric poset  
    in \Cref{fig:P2}, via $(x,\at(x))\mapsto x$.
    In fact, every locally geometric poset arises in this way, as indicated by the following. 
    \end{example}

\begin{theorem}[{\cite[Thm 11.6]{bibby}}]
    A poset is locally geometric if and only if it is isomorphic to the poset of flats of a matroid prescheme. Furthermore, each locally geometric poset is the poset of flats of a unique simple matroid prescheme, up to isomorphism.
\end{theorem}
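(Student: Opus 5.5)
We plan to prove the two directions separately and then handle uniqueness, working locally on intervals and gluing with the axioms \ref{I3}, \ref{I4} and \ref{M4'}.

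\emph{Flats form a locally geometric poset.} Let $M=(S,\I)$ be a matroid prescheme and $\F$ its poset of flats. Fix a flat $x$. By the remark after \Cref{defn:matroids}, $\I\cap S_{\leq x}$ is the family of independent sets of an ordinary matroid on $\at(x)$, since $S_{\leq x}\cong 2^{\at(x)}$.
\begin{enumerate}
\item First, show that $\F_{\leq x}$ is exactly the lattice of flats of this matroid. An element $y\leq x$ is a flat of $M$ iff it is closed with respect to the circuits lying below $x$. The key point is that a circuit $\zeta$ covering some $\alpha\leq y$ must already lie below $x$ when $x$ is a flat; this is immediate from the definition of a flat applied to $x$.
\item Hence $\F_{\leq x}$ is a geometric lattice, with rank given by $\rho$ of \Cref{def:rank}.
\item $\F$ is bounded below by the closure of $\zero$, which is $\zero$ itself when the matroid is loopless. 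In general we would restrict to the closure of $\zero$; to avoid this, we would first reduce to the simple case.
\item The rank functions on the various intervals agree, so $\F$ is ranked.
\end{enumerate}

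\emph{Every locally geometric poset arises.} Given $\P$, build a simplicial poset $S$ whose elements are pairs $(x,T)$ with $x\in\P$ and $T\subseteq\at(x)$, identifying $(x,T)\sim(x',T)$ whenever there is a $y\leq x,x'$ with $T\subseteq\at(y)$. This matches the picture in \Cref{fig:ex2}, where $(c,12)$ appears once but $(u_1,1234)\neq(u_2,1234)$. More precisely, take the elements to be pairs $(y,T)$ with $y\in\bigvee T$ and the order $(y,T)\leq(y',T')$ iff $y\leq y'$ and $T\subseteq T'$. Declare $(y,T)$ independent iff $\rk(y)=|T|$.
\begin{enumerate}
\item Check that $S_{\leq(y,T)}\cong 2^T$. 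This uses \Cref{lem:triplejoin} and the fact that $\P_{\leq y}$ is a lattice: each subset $T'\subseteq T$ has a unique join below $y$.
\item (I1) and (I2) are routine.
\item (I3) reduces to the matroid exchange property inside the geometric lattice $\P_{\leq\gamma}$.
\item Check that the flats are exactly the pairs $(y,\at(y))$, so that $(y,\at(y))\mapsto y$ is an isomorphism onto $\P$, and that $M$ is simple.
\end{enumerate}

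\emph{Uniqueness.} If $M'$ is simple with flats isomorphic to $\P$, then its atoms correspond to $\at(\P)$, because simplicity makes the atoms flats. Each element $\alpha$ of $S'$ determines its closure, a flat $y$, together with $\at(\alpha)\subseteq\at(y)$. We would show that the map $\alpha\mapsto(y_\alpha,\at(\alpha))$ is an isomorphism onto the construction above that preserves independence. Here $y_\alpha$ should be a minimal flat over the atoms of $\alpha$; it is unique because $S'_{\leq\alpha}$ is Boolean and closure is computed locally.

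\emph{Main obstacle.} The hard step is \ref{I4} in the existence direction: if $\nu$ is a maximal independent element below $\alpha$ and $\nu\leq\beta$, then $\alpha\vee\beta\neq\varnothing$. We plan to argue as follows.
\begin{enumerate}
\item The maximal independent element $\nu=(z,T)$ below $\alpha=(y,U)$ has $z=y$ as a flat, since the rank is full and the interval is geometric.
\item Write $\beta=(w,V)$. Then $\nu\leq\beta$ gives $y\leq w$.
\item Since $\P_{\leq w}$ is a lattice containing both $y$ and the atoms of $U\cup V$, the pair $(w,U\cup V)$ is a common upper bound, so the join is nonempty.
\end{enumerate}
Making the identifications in $S$ precise, so that this common bound genuinely lies above both $\alpha$ and $\beta$, is where we expect the most care to be needed.
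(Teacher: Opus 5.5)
Your existence direction is exactly the construction $M(\P)$ that the paper attributes to \cite{bibby}, and your \eqref{I4} check works as written. Once $z=y$, the pair $(w,U\cup V)$ is literally an element of $S$ lying above both elements, so no identifications are involved.

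\textbf{Main gap: well-definedness of $y_\alpha$ in the uniqueness part.} As you define it (``a minimal flat over the atoms of $\alpha$''), $y_\alpha$ is not well defined. In \Cref{fig:ex2}, the atoms $2,3,4$ of $\alpha=(u_1,234)$ lie below both minimal flats $(u_1,1234)$ and $(u_2,1234)$, but $\alpha$ itself lies below only the first. You need the least flat above the \emph{element} $\alpha$. Booleanness of $S'_{\le\alpha}$ says nothing about elements above $\alpha$, and closures computed inside two different flats $x_1,x_2\ge\alpha$ could a priori be different elements, even with the same atoms. This is exactly where the gluing axioms must enter.
\begin{itemize}
\item \emph{Existence of a flat above $\alpha$.} Take $\beta\ge\alpha$ maximal with $\rho(\beta)=\rho(\alpha)$. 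Suppose a circuit $\zeta$ covers $\zeta\setminus c\le\beta$ but $\zeta\not\le\beta$. Apply \eqref{M4'} with $\nu=\zeta\setminus c$. If $c\not\le\beta$, apply it to $\zeta$ and $\beta$; this produces an element strictly above $\beta$ of the same rank. If $c\le\beta$, apply it to $\zeta$ and the element of $S'_{\le\beta}$ with atom set $\at(\zeta)$; this produces a common upper bound of two distinct elements with the same atoms. Either way you get a contradiction, so $\beta$ is a flat.
\item \emph{Uniqueness.} Let $y_1,y_2\ge\alpha$ be flats of rank $\rho(\alpha)$. A basis $\nu$ of $\alpha$ is a maximal independent element below $y_1$, and $\nu\le y_2$. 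So \eqref{I4} gives some $\gamma\in y_1\vee y_2$. Inside the Boolean $S'_{\le\gamma}$, both $\at(y_i)$ equal the local closure of $\at(\alpha)$, hence $y_1=y_2$.
\item \emph{Minimality.} Running the same argument inside any flat $w\ge\alpha$ shows $y_\alpha\le w$.
\end{itemize}
Only with this in hand is $\alpha\mapsto(y_\alpha,\at(\alpha))$ a well-defined, order- and independence-preserving bijection onto $S(\P)$.

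\textbf{Smaller gap: bounded-belowness in the first direction.} For a prescheme with loops, you defer this to an unexplained ``reduction to the simple case''. For preschemes, that reduction is itself a nontrivial construction. It is easier to argue directly:
\begin{itemize}
\item Every flat $x$ lies above every loop, since a loop is a circuit covering $\zero\le x$.
\item Hence $S_{\le x}$ contains an element $\lambda_x$ whose atoms are exactly the loops, and $\lambda_x$ is the bottom of the flats below $x$.
\item The only independent element below $\lambda_x$ is $\zero$. So \eqref{I4} gives $\lambda_x\vee\lambda_{x'}\neq\varnothing$ for any two flats $x,x'$.
\item A Boolean interval cannot contain two elements with the same atoms, so $\lambda_x=\lambda_{x'}$ is the minimum of the poset of flats.
\end{itemize}

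In short, your plan uses \eqref{I4} only in the existence direction. The converse direction and the uniqueness statement are where it is indispensable.
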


The proof explicitly constructs, for any locally geometric poset $\P$, a simple matroid prescheme $M(\P)$ whose poset of flats is isomorphic to $\P$. This construction is as follows.

\begin{defn}
The \textbf{associated matroid prescheme} $M(\P)=(S,\I)$ of a locally geometric poset $\P$ consists of the simplicial poset
    \[S=S(\P)=\{(x,T) \colon T\subseteq\at(\P), x\in\vee T\}\]
    viewed as a subposet of $\P\times 2^{\at(\P)}$,
    and the independence poset
    \[\I=\I(\P)=\{(x,T)\in S \colon \rk_\P(x)=|T|\}.\]
    The rank function on $M(\P)$ is given by $\rk_M(x,T)=\rk_\P(x)$.
\end{defn}

\begin{example}\label{ex:M(P)}
The matroid prescheme $M(\P)$ associated to the locally geometric poset $\P$ in \Cref{fig:P2} is precisely the matroid prescheme in \Cref{fig:ex2}, where we abbreviate the atoms $(i,i)$ by $i$.
\end{example}

\subsection{Unicircular elements}\label{sec:uni}
In this section, we introduce the notion of a unicircular element and establish useful properties. These unicircular elements will play a crucial role in \Cref{sec:OSdef,sec:gb} while working with the Orlik--Solomon ideal.

\begin{defn}
In a matroid prescheme $(S,\I)$, an element $\omega\in S$ for which there exists a unique circuit $\zeta$ with $\zeta\leq \omega$ is called \textbf{unicircular}; we may write $\zeta$-\textbf{unicircular} to distinguish this unique circuit.
\end{defn}

\begin{example}
 In \Cref{fig:ex2}, $\zeta=(c,123)$ is the only circuit (see \Cref{ex:CF}).
    As a circuit, $\zeta$ is necessarily
    $\zeta$-unicircular. 
    Additionally, $(u_1,1234)$ and $(u_2,1234)$ are $\zeta$-unicircular.
    \end{example}

We now characterize unicircular elements and establish properties that will be needed later.

\begin{prop}\label{lem:u-a-ind}
Let $(S,\I)$ be a matroid prescheme with $\omega\in S\setminus\I$ and $a\in\at(\omega)$.
Then $\omega\setminus a$ is independent if and only if there exists a circuit $\zeta$ for which $\omega$ is $\zeta$-unicircular and $a\leq\zeta$.
\end{prop}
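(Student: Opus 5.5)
We prove the two implications separately. The tools are the order-ideal property \ref{I2}, the fact that $S_{\leq\omega}$ is a Boolean lattice $2^{\at(\omega)}$, and the circuit elimination axiom \ref{C3}.

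\emph{($\Leftarrow$)} Suppose $\omega$ is $\zeta$-unicircular with $a\leq\zeta$, and suppose for contradiction that $\omega\setminus a\notin\I$. Since $S$ is finite, $S_{\leq\omega\setminus a}$ contains a minimal non-independent element $\xi$. By \ref{I2}, every element below $\xi$ is independent, so $\xi$ is a circuit. Also $\xi\leq\omega\setminus a\leq\omega$, so uniqueness forces $\xi=\zeta$. But then $a\leq\zeta=\xi\leq\omega\setminus a$, which contradicts the defining property $(\omega\setminus a)\meet a=\{\zero\}$ of the complement in $S_{\leq\omega}$.

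\emph{($\Rightarrow$)} Suppose $\omega\setminus a\in\I$. Because $\omega\notin\I$, the same minimality argument gives at least one circuit $\zeta\leq\omega$. Any circuit $\zeta\leq\omega$ must satisfy $a\leq\zeta$. Otherwise, in the Boolean lattice $S_{\leq\omega}$ we would have $\at(\zeta)\subseteq\at(\omega)\setminus\{a\}$, so $\zeta\leq\omega\setminus a$, and \ref{I2} would make $\zeta$ independent, which is impossible.

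It remains to show this circuit is unique; this is the main step. Suppose $\zeta_1\neq\zeta_2$ are both circuits below $\omega$. By the previous paragraph, $a\in\at(\zeta_1)\cap\at(\zeta_2)$. To apply \ref{C3} we need some $\omega'\in\zeta_1\join\zeta_2$ with $\omega'\leq\omega$. This holds because $S_{\leq\omega}$ is a lattice: the join of $\zeta_1$ and $\zeta_2$ taken there is the element of $S_{\leq \omega}$ with atom set $\at(\zeta_1)\cup\at(\zeta_2)$. Since $S_{\leq\omega}$ is a down-set, every upper bound of $\zeta_1,\zeta_2$ lying below $\omega$ sits above this element, so it is a minimal upper bound in $S$ as well. (This is the local-lattice uniqueness already used in \Cref{lem:triplejoin}.)

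Applying \ref{C3} then gives a circuit $\xi\leq\omega'\setminus a$. Inside $S_{\leq\omega}$, the element $\omega'\setminus a$ has atom set $\at(\omega')\setminus\{a\}\subseteq\at(\omega)\setminus\{a\}$, so $\omega'\setminus a\leq\omega\setminus a$. Hence $\xi\leq\omega\setminus a$, and \ref{I2} makes $\xi$ independent, a contradiction. So $\omega$ is $\zeta$-unicircular, and $a\leq\zeta$ as shown above.

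The only delicate point is checking that the complements $\omega'\setminus a$ and $\omega\setminus a$ compare correctly inside the Boolean lattice $S_{\leq\omega}$. This reduces to comparing atom sets via $S_{\leq\omega}\cong2^{\at(\omega)}$.
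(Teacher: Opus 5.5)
Your proof is correct and follows the same route as the paper. Both directions rest on the existence of a circuit below a dependent element together with (I2), and uniqueness in the forward direction comes from (C3). You also make explicit two facts the paper leaves implicit: that $\zeta_1\vee\zeta_2$ contains an element below $\omega$, and that $\omega'\setminus a\leq\omega\setminus a$ in the Boolean lattice $S_{\leq\omega}$. (A trivial quibble: in the reverse direction, the fact that everything strictly below $\xi$ is independent follows from the minimality of $\xi$ within the down-set $S_{\leq\omega\setminus a}$, not from (I2).)
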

\begin{proof}

First, suppose $\omega\setminus a$ is independent. 
Since $\omega$ is not independent, there exists a circuit $\zeta\leq\omega$. 
By independence of $\omega\setminus a$, any circuit $\zeta'\leq\omega$ must have $a\in\at(\zeta')$. 
Then by \eqref{C3}, there must be a unique such circuit.
Therefore, $\omega$ is $\zeta$-unicircular and $a\leq\zeta$.

Conversely, suppose $\omega$ is $\zeta$-unicircular with $a\leq \zeta$.
If $\omega\setminus a$ were not independent, there would be a circuit $\zeta'\leq\omega\setminus a\leq\omega$.
Uniqueness of $\zeta$ would imply $\zeta=\zeta'$, but this is impossible since $a\leq \zeta$ and $a\not\leq\zeta'$.
Thus, $\omega\setminus a$ must be independent.
\end{proof}

\begin{prop}[Rank characterizes unicircular]\label{prop:rank-uni}
In a matroid prescheme $(S,\I)$ with rank function $\rk$, an element $\omega\in S$ is unicircular if and only if $\rk(\omega)=|\omega|-1$.
\end{prop}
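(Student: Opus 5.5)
The plan is to work locally. Fix $\omega\in S$. Since $S_{\leq\omega}\cong 2^{\at(\omega)}$ is Boolean, the restriction of $\I$ to $S_{\leq\omega}$ is the family of independent sets of a matroid on the ground set $\at(\omega)$, as noted after \Cref{defn:matroids}. In this matroid:
\begin{itemize}
\item the circuits are exactly the circuits $\zeta$ of $(S,\I)$ with $\zeta\leq\omega$, since minimality in $S\setminus\I$ can be tested inside the down-set $S_{\leq\omega}$;
\item the rank of the whole ground set is $\rk(\omega)$, by \Cref{def:rank}.
\end{itemize}
So the statement reduces to a fact about a single finite matroid on the ground set $E=\at(\omega)$: it has exactly one circuit if and only if its rank is $|E|-1$. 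I will nevertheless phrase the argument in terms of \Cref{lem:u-a-ind} and \eqref{C3}, to stay within the paper's framework.

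\emph{($\Rightarrow$)} Suppose $\omega$ is $\zeta$-unicircular. Then $\omega\notin\I$ (it lies above a circuit, and $\I$ is an order ideal by \eqref{I2}), so $\rk(\omega)\leq|\omega|-1$. Pick any atom $a\leq\zeta$. By \Cref{lem:u-a-ind}, $\omega\setminus a$ is independent. It lies in $S_{\leq\omega}$ and has size $|\omega|-1$, hence $\rk(\omega)\geq|\omega|-1$, giving equality.

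\emph{($\Leftarrow$)} Suppose $\rk(\omega)=|\omega|-1$, and choose $\nu\in\I\cap S_{\leq\omega}$ with $|\nu|=|\omega|-1$. Then $\omega\notin\I$, and $\nu=\omega\setminus a$ for the unique atom $a\in\at(\omega)\setminus\at(\nu)$. Here I use that $S_{\leq\omega}$ is Boolean, so every element of corank one below $\omega$ is the complement of an atom. Applying \Cref{lem:u-a-ind} to $\omega$ and $a$ shows that $\omega$ is $\zeta$-unicircular for some circuit $\zeta$ (with $a\leq\zeta$).

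The only point needing care is the identification $\nu=\omega\setminus a$ together with the fact that $\omega\notin\I$; both are immediate from the Boolean structure of $S_{\leq\omega}$ and from the definition of rank. The real content, namely uniqueness of the circuit via \eqref{C3}, is already packaged in \Cref{lem:u-a-ind}, so I expect no serious obstacle.
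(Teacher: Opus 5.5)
Your proof is correct and is essentially the paper's argument. Both directions go through \Cref{lem:u-a-ind}: for the forward direction you use the independent element $\omega\setminus a$ with $a\leq\zeta$, and for the converse you use an independent $\nu\leq\omega$ with $|\nu|=|\omega|-1$ to extract the atom $a$. Your explicit verification that $\omega\notin\I$ in each direction, which is a hypothesis of \Cref{lem:u-a-ind}, is slightly more careful than the paper's write-up, which leaves this implicit.
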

\begin{proof}

First suppose $\omega$ is $\zeta$-unicircular, and let $a\in\at(\zeta)$. Then $\omega\setminus a$ is independent by 
\Cref{lem:u-a-ind},
so 
$|\omega|-1=|\omega\setminus a|=\rk(\omega\setminus a)\leq\rk(\omega)<|\omega|$ and hence $\rk(\omega)=|\omega|-1$.

Conversely, suppose $\rk(\omega)=|\omega|-1$, and let $\nu\in\I_{\leq \omega}$ be such that $\rk(\nu)=\rk(\omega)$. 
Then $|\nu|=|\omega|-1$, meaning $a=\omega\setminus\nu$ is an atom.
Now as $\nu=\omega\setminus a$ is independent,
$\omega$ is unicircular by \Cref{lem:u-a-ind}.
\end{proof}

\begin{lemma}\label{lem:u-join}
Let $(S,\I)$ be a matroid prescheme with $\zeta$-unicircular  $\omega\in S$,  $a\in\at(\zeta)$, and $\nu\in S$ such that $\omega\setminus a\in\omega\meet\nu$. 
Then $\omega\vee\nu\neq\varnothing$ and, if $\nu\in\I$, then any element of $\omega\vee\nu$ is $\zeta$-unicircular.
\end{lemma}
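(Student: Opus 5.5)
The plan is to get both conclusions from \eqref{M4'} and \Cref{prop:rank-uni}, using \Cref{lem:u-a-ind} to compute ranks.

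\emph{Nonemptiness of the join.} Since $\omega$ is $\zeta$-unicircular and $a\in\at(\zeta)$, \Cref{lem:u-a-ind} gives $\omega\setminus a\in\I$. Hence
\[\rk(\omega\setminus a)=|\omega|-1=\rk(\omega),\]
where the last equality is \Cref{prop:rank-uni}. Now apply \eqref{M4'} with $\alpha=\omega$, $\beta=\nu$ and meet element $\omega\setminus a\in\omega\meet\nu$. Its hypothesis $\rk(\alpha)=\rk(\omega\setminus a)$ holds, so $\omega\join\nu\neq\varnothing$. Moreover, every $\gamma\in\omega\join\nu$ satisfies $\rk(\gamma)=\rk(\nu)$.

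\emph{Size of $\gamma$.} Now suppose $\nu\in\I$, so $\rk(\gamma)=\rk(\nu)=|\nu|$. To conclude via \Cref{prop:rank-uni} we need $|\gamma|=|\nu|+1$. Since $S_{\leq\gamma}$ is Boolean, atoms behave like sets, and $\at(\gamma)=\at(\omega)\cup\at(\nu)$: the set $\at(\omega)\cup\at(\nu)$ determines an element below $\gamma$ lying above both $\omega$ and $\nu$, so minimality of $\gamma$ forces equality. Also $\omega\meet\nu$ computed inside $S_{\leq\gamma}$ is the intersection $\at(\omega)\cap\at(\nu)$. This intersection must be $\at(\omega\setminus a)$, because $\omega\setminus a\in\omega\meet\nu$ and, in a lattice, the meet below $\gamma$ is unique. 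Thus $a\notin\at(\nu)$, and
\[|\gamma|=|\nu|+|\omega|-|\omega\setminus a|=|\nu|+1.\]
Therefore $\rk(\gamma)=|\gamma|-1$, and $\gamma$ is unicircular by \Cref{prop:rank-uni}.

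\emph{Identifying the circuit.} Since $\zeta\leq\omega\leq\gamma$ and $\gamma$ has a unique circuit, that circuit is $\zeta$, so $\gamma$ is $\zeta$-unicircular.

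The main obstacle is the atom-counting step. It relies on the meet $\omega\setminus a$, which is a priori taken in all of $S$, coinciding with the lattice meet inside $S_{\leq\gamma}$. This holds because $S_{\leq\gamma}$ is a lattice in which every lower bound of $\omega$ and $\nu$ lies below the lattice meet, and $\omega\setminus a$ is maximal among lower bounds in $S$.
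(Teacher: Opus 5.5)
Your proof is correct and follows the paper's argument. \Cref{lem:u-a-ind} and \Cref{prop:rank-uni} give $\rk(\omega\setminus a)=\rk(\omega)$, then \eqref{M4'} yields $\omega\vee\nu\neq\varnothing$ with $\rk(\gamma)=\rk(\nu)$, and \Cref{prop:rank-uni} finishes. The only difference is that you explicitly justify $|\gamma|=|\nu|+1$ via the Boolean interval $S_{\leq\gamma}$ and explain why the unique circuit below $\gamma$ is $\zeta$; the paper's proof uses both points without comment.
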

\begin{proof}
As $\omega\setminus a$ is independent by \Cref{lem:u-a-ind}, $\rk(\omega\setminus a)=|\omega\setminus a|=|\omega|-1=\rk(\omega)$. Then $\omega\join\nu\neq\varnothing$ and any $\gamma\in\omega\vee\nu$ has $\rk(\gamma)=\rk(\nu)$ by \eqref{M4'}. If, in addition, $\nu\in\I$, then  $\rk(\gamma)=\rk(\nu)=|\nu|=|\gamma|-1$ and $\gamma$ is $\zeta$-unicircular by \Cref{prop:rank-uni}. 
\end{proof}

\begin{lemma}\label{lem:i'-iii'}
Let $(S,\I)$ be a matroid prescheme with independent $\beta\in\I$ and $\zeta$-unicircular $\omega\in S$. 
If $a\in\at(\zeta)\cap\at(\beta)$ such that $(\omega\setminus a)\meet\beta=\{\zero\}$, then  $((\omega\setminus a)\join\beta)\cap\I=\varnothing$.
\end{lemma}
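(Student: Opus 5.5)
The plan is to argue by contradiction using \Cref{lem:u-join}, with the new element $\nu$ there taken to be an arbitrary $\gamma\in(\omega\setminus a)\join\beta$. The contradiction will come from counting atoms inside a Boolean interval.

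Fix $\gamma\in(\omega\setminus a)\join\beta$ and suppose, for contradiction, that $\gamma\in\I$. Since $a\leq\beta\leq\gamma$ and $\omega\setminus a\leq\gamma$, the element $\gamma$ lies above every atom of $\omega$.

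First I dispose of the case $\omega\leq\gamma$. Then $\zeta\leq\omega\leq\gamma$, so $\gamma\notin\I$ by \ref{I2}, which is a contradiction.

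Otherwise $\omega\not\leq\gamma$. I claim that $\omega\setminus a\in\omega\meet\gamma$.
\begin{itemize}
\item It is a common lower bound of $\omega$ and $\gamma$.
\item Any common lower bound lies in the Boolean interval $S_{\leq\omega}$, so it corresponds to a subset of $\at(\omega)$.
\item The only element of $S_{\leq\omega}$ strictly above $\omega\setminus a$ is $\omega$ itself, and $\omega\not\leq\gamma$.
\end{itemize}
So $\omega\setminus a$ is a maximal common lower bound. \Cref{lem:u-join}, applied with $\nu=\gamma$, then gives $\omega\join\gamma\neq\varnothing$. Because we assumed $\gamma\in\I$, it also says that every $\delta\in\omega\join\gamma$ is $\zeta$-unicircular.

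Fix such a $\delta$. By \Cref{prop:rank-uni}, $\rk(\delta)=|\delta|-1$. The proof of \Cref{lem:u-join} (via \eqref{M4'}) gives $\rk(\delta)=\rk(\gamma)$, and independence gives $\rk(\gamma)=|\gamma|$. Hence $|\delta|=|\gamma|+1$.

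On the other hand, $S_{\leq\delta}$ is a Boolean lattice, and in it $\delta$ is the join of $\omega$ and $\gamma$. Therefore
\[
\at(\delta)=\at(\omega)\cup\at(\gamma)=\at(\omega\setminus a)\cup\{a\}\cup\at(\gamma)=\at(\gamma),
\]
since $a\leq\beta\leq\gamma$ and $\omega\setminus a\leq\gamma$. This gives $|\delta|=|\gamma|$, contradicting $|\delta|=|\gamma|+1$. Hence $\gamma\notin\I$, and $((\omega\setminus a)\join\beta)\cap\I=\varnothing$.

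The main delicate point is verifying that $\omega\setminus a$ is genuinely a maximal element of $\omega\meet\gamma$, so that \Cref{lem:u-join} applies. This is exactly why the case $\omega\leq\gamma$ is split off first. The hypothesis $(\omega\setminus a)\meet\beta=\{\zero\}$ appears not to be needed in this argument; it would only give $|\gamma|=|\omega|-1+|\beta|$.
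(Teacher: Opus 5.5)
Your proof is correct and follows the paper's route: the same split into $\gamma\geq\omega$ versus $\omega\setminus a\in\omega\meet\gamma$, with \Cref{lem:u-join} supplying an element of $\omega\join\gamma$. The only difference is the finish: the paper simply notes that $\zeta\leq\gamma$ inside the Boolean lattice $S_{\leq\upsilon}$ because $\at(\zeta)\subseteq\at(\gamma)$, which avoids both the contradiction hypothesis $\gamma\in\I$ and the rank count (your atom count in fact already forces $\delta=\gamma$).
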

\begin{proof}
Suppose that $\gamma\in(\omega\setminus a)\join\beta$. 
Either $\gamma\geq\omega$ or $\omega\setminus a\in\gamma\meet\omega$,
so there exists some $\upsilon\in\omega\join\gamma$ (using \Cref{lem:u-join} in the latter case).

Now, $\zeta$ is the unique join of its atoms in the lattice $S_{\leq\upsilon}$, and similarly for $\gamma$.
 Since $\at(\zeta)\subseteq\at(\gamma)$, this implies $\zeta\leq\gamma$
 and thus $\gamma$ cannot be independent via \eqref{I2}.
\end{proof}

\begin{lemma}\label{lem:ii'-iii'}
Let $(S,\I)$ be a matroid prescheme with independent $\beta\in\I$ and $\zeta$-unicircular  $\omega\in S$. If $\omega\meet\beta=\{\zero\}$, then for any $a\in\at(\zeta)$,
\[ 
((\omega\setminus a)\join\beta)\cap\I=
\{\upsilon\setminus a\st\upsilon\in\omega\vee\beta \text{ and $\upsilon$ is $\zeta$-unicircular}\}.
\]
\end{lemma}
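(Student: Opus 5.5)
We prove the two inclusions separately. Throughout we use that $\omega\setminus a$ is independent (\Cref{lem:u-a-ind}), so $\rk(\omega\setminus a)=|\omega|-1=\rk(\omega)$ by \Cref{prop:rank-uni}. We also use that in the simplicial poset $S$, each $S_{\leq\upsilon}$ is Boolean. Hence joins and meets inside such an interval correspond to unions and intersections of atom sets.

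\textbf{Inclusion ``$\supseteq$''.} Let $\upsilon\in\omega\join\beta$ be $\zeta$-unicircular. Since $a\in\at(\zeta)$, \Cref{lem:u-a-ind} shows that $\upsilon\setminus a$ is independent.

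It remains to show $\upsilon\setminus a\in(\omega\setminus a)\join\beta$. Work in the Boolean lattice $S_{\leq\upsilon}$. The hypothesis $\omega\meet\beta=\{\zero\}$ gives $\at(\omega)\cap\at(\beta)=\varnothing$, and $\at(\upsilon)=\at(\omega)\cup\at(\beta)$. In particular $a\notin\at(\beta)$, so $\beta\leq\upsilon\setminus a$. Also $\omega\setminus a\leq\upsilon\setminus a$, and their atoms together give all of $\at(\upsilon\setminus a)$.

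For minimality, let $u\leq\upsilon\setminus a$ be any common upper bound of $\omega\setminus a$ and $\beta$. Inside the Boolean lattice, $u$ contains all these atoms, so $u=\upsilon\setminus a$. By the argument of \Cref{lem:triplejoin}, the only element of a join lying below a given element is determined locally. Hence $\upsilon\setminus a\in(\omega\setminus a)\join\beta$.

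\textbf{Inclusion ``$\subseteq$''.} Let $\gamma\in((\omega\setminus a)\join\beta)\cap\I$. We must produce a $\zeta$-unicircular $\upsilon\in\omega\join\beta$ with $\gamma=\upsilon\setminus a$.

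First, $a\not\leq\gamma$. Otherwise $\at(\zeta)\subseteq\at(\gamma)$, and the argument in the proof of \Cref{lem:i'-iii'} would give $\zeta\leq\gamma$, contradicting independence. (Concretely: $\omega\setminus a\in\gamma\meet\omega$, so some $\upsilon'\in\omega\join\gamma$ exists by \Cref{lem:u-join}. Inside the lattice $S_{\leq\upsilon'}$ this forces $\zeta\leq\gamma$.)

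Hence $\omega\setminus a\leq\gamma$ but $\omega\not\leq\gamma$. Since the interval $S_{\leq\omega}$ is Boolean, $\omega\meet\gamma=\{\omega\setminus a\}$.

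Now apply \Cref{lem:u-join} with $\nu=\gamma\in\I$. It shows that $\omega\join\gamma\neq\varnothing$ and that every $\upsilon\in\omega\join\gamma$ is $\zeta$-unicircular. Fix such a $\upsilon$. Then $\at(\upsilon)=\at(\gamma)\cup\{a\}$, so $\upsilon\setminus a=\gamma$ in the Boolean lattice $S_{\leq\upsilon}$.

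Finally, we show $\upsilon\in\omega\join\beta$. In $S_{\leq\upsilon}$, the atoms of $\omega$ and $\beta$ together cover $\at(\gamma)\cup\{a\}=\at(\upsilon)$. So any common upper bound of $\omega$ and $\beta$ below $\upsilon$ equals $\upsilon$, and $\upsilon$ is a minimal upper bound.

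\textbf{Main obstacle.} The delicate point is the local-to-global passage: identifying an element as a \emph{minimal} upper bound in all of $S$ from information inside a single interval. I would handle this uniformly: if $w\leq u$ is a common upper bound, then $w$ is determined by its atoms within the lattice $S_{\leq u}$, which forces $w=u$.

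The other subtle step is showing $a\not\leq\gamma$. This requires an element lying above both $\gamma$ and $\omega$, so that $\zeta$ and $\gamma$ can be compared inside a single lattice; \Cref{lem:u-join} is exactly what supplies that element.
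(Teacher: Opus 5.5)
Your proof is correct and follows essentially the same route as the paper: for $\subseteq$ you apply \Cref{lem:u-join} to $\omega$ and $\gamma$ (via $\omega\setminus a\in\omega\meet\gamma$) and read off $\gamma=\upsilon\setminus a$ in the Boolean interval $S_{\leq\upsilon}$, and for $\supseteq$ you combine \Cref{lem:u-a-ind} with the Boolean structure of $S_{\leq\upsilon}$. You are more careful than the paper in explicitly checking that $\upsilon\in\omega\join\beta$ and that $\upsilon\setminus a$ is a minimal upper bound. Your separate detour proving $a\not\leq\gamma$ is unnecessary, though: $\omega\not\leq\gamma$ (by independence of $\gamma$) already forces $a\notin\at(\gamma)$ inside $S_{\leq\upsilon}$.
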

\begin{proof}
Suppose $\gamma\in((\omega\setminus a)\vee\beta)\cap\I$. 
Then $\gamma\meet\omega\ni\omega\setminus a$, 
so there exists some $\zeta$-unicircular element $\upsilon\in\gamma\join\omega$ by \Cref{lem:u-join}. 
As $S_{\leq\upsilon}$ is a Boolean lattice and $\at(\gamma)=\at(\upsilon)\setminus \{a\}$, we conclude $\gamma=\upsilon\setminus a$.

Conversely, suppose $\upsilon\in\omega\join\beta$ and $\upsilon$ is $\zeta$-unicircular. Then $\upsilon\setminus a$ is independent by \Cref{lem:u-a-ind} and, 
 because $S_{\leq\upsilon}$ is a Boolean lattice, $((\omega\setminus a)\join\beta)\cap S_{\leq \upsilon} = \{\upsilon\setminus a\}$. 
\end{proof}

\begin{lemma}\label{lem:iii'-iii'}
Let $(S,\I)$ be a matroid prescheme with $\omega_i$ a $\zeta_i$-unicircular element for $i=1,2$, with $\omega_1\neq\omega_2$, and assume that $\omega_1\setminus a_1=\omega_2\setminus a_2$ for some atoms $a_i\in\at(\zeta_i)$. 
Then $\omega_1\vee\omega_2\neq\varnothing$ and, for all $\upsilon\in\omega_1\vee\omega_2$ and $a\in\at(\zeta_1)\cup\at(\zeta_2)$, there exists a circuit $\zeta^\upsilon_a$ so that
\begin{enumerate}
\item  
$\upsilon\setminus a$ is $\zeta^\upsilon_a$-unicircular,
\item $\at(\zeta^\upsilon_a)\subseteq \at(\zeta_1)\cup\at(\zeta_2)$, and
\item if $b\in\at(\zeta_1)\cup\at(\zeta_2)$ with $b\in\at(\zeta^\upsilon_a)$, then $a\in\at(\zeta^\upsilon_b)$.
\end{enumerate}
\end{lemma}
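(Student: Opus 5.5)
Throughout, set $\nu:=\omega_1\setminus a_1=\omega_2\setminus a_2$. By \Cref{lem:u-a-ind}, $\nu$ is independent, so $\rk(\nu)=|\nu|=|\omega_i|-1=\rk(\omega_i)$ for $i=1,2$. The plan is to produce $\upsilon$ using \eqref{M4'}, and then to work entirely inside the Boolean lattice $S_{\leq\upsilon}$. There the restriction of $\I$ is an ordinary matroid on $\at(\upsilon)$, so (1)--(3) reduce to elementary matroid facts, which I would translate back using \Cref{prop:rank-uni} and \Cref{lem:u-a-ind}.

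\emph{Step 1: nonemptiness and $a_1\neq a_2$.} First I show $\nu\in\omega_1\meet\omega_2$. Since $\nu\leq\omega_1$ and $\nu\leq\omega_2$, and $\nu$ has codimension one in both Boolean intervals, $\nu$ is a maximal common lower bound unless $\omega_1=\omega_2$. Applying \Cref{lem:u-join} with $\omega=\omega_1$, $a=a_1$ and $\omega_2$ in the role of $\nu$ then gives $\omega_1\join\omega_2\neq\varnothing$. Moreover, \eqref{M4'} gives $\rk(\upsilon)=\rk(\omega_2)=|\nu|$ for every $\upsilon\in\omega_1\join\omega_2$.

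Now suppose $a_1=a_2$. Then $\omega_1$ and $\omega_2$ would be two elements of the Boolean lattice $S_{\leq\upsilon}$ with the same atom set, forcing $\omega_1=\omega_2$. Hence $a_1\neq a_2$, and $|\upsilon|=|\nu|+2$. So the local matroid on $\at(\upsilon)$ has rank $|\upsilon|-2$, that is, nullity $2$.

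\emph{Step 2: the local matroid and parts (1), (2).} Fix $\upsilon\in\omega_1\join\omega_2$. Let $U=\at(\zeta_1)\cup\at(\zeta_2)$, viewed inside the matroid on $\at(\upsilon)$. Here $\zeta_1\neq\zeta_2$ are two circuits of this matroid: if $\zeta_1=\zeta_2$, then unicircularity together with $a_1,a_2\in\at(\zeta_1)$ would be incompatible with $\omega_1\neq\omega_2$, which I would check via \Cref{lem:u-a-ind}.

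Because $U$ contains two distinct circuits, its nullity is at least $2$. Since the whole matroid has nullity $2$, the nullity of $U$ is exactly $2$. Consequently $\rk(\upsilon)=\rk(U)+|\at(\upsilon)\setminus U|$, so every atom of $\upsilon$ outside $U$ is a coloop.

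For $a\in U$, the atom $a$ lies on a circuit, so it is not a coloop. Hence $\rk(\upsilon\setminus a)=\rk(\upsilon)=|\upsilon\setminus a|-1$, and \Cref{prop:rank-uni} shows that $\upsilon\setminus a$ is unicircular; call its circuit $\zeta^\upsilon_a$. This gives (1). Since circuits avoid coloops, $\at(\zeta^\upsilon_a)\subseteq U$, which gives (2).

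\emph{Step 3: part (3).} Suppose $b\in U$ and $b\in\at(\zeta^\upsilon_a)$. By \Cref{lem:u-a-ind} applied to $\omega=\upsilon\setminus a$ and the atom $b$, the element $(\upsilon\setminus a)\setminus b=(\upsilon\setminus b)\setminus a$ is independent. Applying \Cref{lem:u-a-ind} in the converse direction to $\omega=\upsilon\setminus b$, which is $\zeta^\upsilon_b$-unicircular by (1), and the atom $a$, we get $a\in\at(\zeta^\upsilon_b)$. The case $a=b$ is trivial.

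I expect the main obstacle to be Step 1 and the distinctness claims. These are the bookkeeping points where the simplicial poset differs from a genuine Boolean lattice: verifying that $\nu$ really lies in the meet $\omega_1\meet\omega_2$, that $a_1\neq a_2$, and that $\zeta_1\neq\zeta_2$. The local matroid argument in Steps 2 and 3 should be routine once everything has been placed inside the single Boolean lattice $S_{\leq\upsilon}$.
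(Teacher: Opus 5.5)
Your proof is correct, but for parts (1) and (2) it takes a different route from the paper.

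The paper argues atom by atom. For $a\in\at(\zeta_1)$ it notes that $(\upsilon\setminus a)\setminus a_2=\omega_1\setminus a$ is independent and applies \Cref{lem:u-a-ind} to get (1). For (2) it splits into cases. If $a\notin\at(\zeta_2)$ it takes $\zeta^\upsilon_a=\zeta_2$. Otherwise it applies circuit elimination \eqref{C3} to some $\gamma\in\zeta_1\vee\zeta_2$ with $\gamma\le\upsilon$, which produces a circuit inside $\gamma\setminus a$.

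You instead do one global count in the matroid on $\at(\upsilon)$. Its nullity is exactly $2$ and is already attained on $U$, so every atom outside $U$ is a coloop. Then (1) follows from \Cref{prop:rank-uni}, and (2) from the fact that circuits avoid coloops. This is uniform, with no case split and no use of \eqref{C3}. It also makes explicit several points the paper uses tacitly: $\nu\in\omega_1\meet\omega_2$; $a_1\neq a_2$, so $\omega_1=\upsilon\setminus a_2$; $\zeta_1\neq\zeta_2$, which \eqref{C3} needs; and $\upsilon\setminus a$ is dependent, which \Cref{lem:u-a-ind} needs. In your version the last of these comes for free from the rank computation.

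The price is that you import standard facts about ordinary matroids rather than staying within the prescheme lemmas: nullity is monotone, two distinct circuits force nullity at least $2$, and coloops lie on no circuit. This is legitimate, since $\I\cap S_{\le\upsilon}$ is an honest matroid. The nullity bound can even be read off from \Cref{prop:rank-uni} applied to the join of $\zeta_1$ and $\zeta_2$ in $S_{\le\upsilon}$: it is dependent but not unicircular.

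The check $\zeta_1\neq\zeta_2$ that you deferred is pure bookkeeping and needs nothing from \Cref{lem:u-a-ind}. If $\zeta_1=\zeta_2$, then $a_1\in\at(\zeta_2)\subseteq\at(\omega_2)=\at(\nu)\cup\{a_2\}$ while $a_1\notin\at(\nu)$, which forces $a_1=a_2$.

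Part (3) is the paper's argument in different form. You apply \Cref{lem:u-a-ind} twice, whereas the paper derives a contradiction directly from the unicircularity of $\upsilon\setminus a$.
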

\begin{proof}
Since $\omega_1\setminus a_1\in\omega_1\meet\omega_2$,  $\omega_1\join\omega_2\neq\varnothing$ by \Cref{lem:u-join}. Let $\upsilon\in\omega_1\join\omega_2$ and  $a\in\at(\zeta_1)\cup\at(\zeta_2)$.

Without loss of generality, assume $a\in\at(\zeta_1)$. 
Then $(\upsilon\setminus a)\setminus a_2=\omega_1\setminus a$ is independent, hence $\upsilon\setminus a$ is $\zeta^\upsilon_a$-unicircular for some circuit $\zeta^\upsilon_a$, by \Cref{lem:u-a-ind}.
If $a\notin\at(\zeta_2)$, then $\zeta^\upsilon_a=\zeta_2$ and (2) holds.
On the other hand, if $a\in\at(\zeta_1)\cap\at(\zeta_2)$, then let 
$\gamma\in\zeta_1\join\zeta_2$ such that $\gamma\leq\upsilon$. By \eqref{C3}, there exists a circuit $\zeta^\upsilon_a\leq \gamma\setminus a\leq\upsilon\setminus a$,
which necessarily satisfies (2). 

Now suppose $b\in\at(\zeta_1)\cup\at(\zeta_2)$ with $b\in\at(\zeta^\upsilon_a)$.
If $a\notin\at(\zeta^\upsilon_b)$, then $\zeta^\upsilon_b\leq\upsilon\setminus a$, contradicting $\zeta^\upsilon_a$-unicircularity of $\upsilon\setminus a$.
Thus, $a\in\at(\zeta^\upsilon_b)$.
\end{proof}

\subsection{No-broken-circuit elements}\label{sec:nbc}

In this section, we define and characterize \textit{no-broken-circuit ($\nbc$) elements} in a matroid prescheme. The $\nbc$ elements play an important role in \S\ref{sec:nbcbasis}.

\begin{defn}\label{defn:nbc}
Let $M=(S,\I)$ be a matroid prescheme with a linear order $\ato$ on the atoms of $S$.
An element of $S$ is called a \textbf{broken circuit} if it is of the form $\zeta\setminus a$ for some circuit $\zeta$ with $a=\min_{\ato}\at(\zeta)$.
An element $\beta\in S$ is called a \textbf{no-broken-circuit} element, or $\nbc$ for short, if there are no broken circuits in $S_{\leq\beta}$. We denote $\nbc(M)$ as the subposet of $S$ containing precisely the $\nbc$ elements.

When $M=M(\P)$ for a locally geometric poset $\P$, we write $\nbc(\P):=\nbc(M)$.
\end{defn}

\begin{example}\label{ex:nbc}
In \Cref{fig:ex2} with $1\ato2\ato3\ato4$, the only broken circuit is $(c,23)=(c,123)\setminus 1$. As such, the $\nbc$ elements are exactly the elements not above $(c,23)$, which are all boxed in bold.
\end{example}

The next few lemmas record basic but useful properties of $\nbc$ elements.
\begin{lemma}[$\nbc$ is independent]
\label{lem:nbc-ind}
If $(S,\I)$ is a matroid prescheme  with linear order $\ato$ on the atoms of $S$,  then $\nbc(M)\subseteq\I$
\end{lemma}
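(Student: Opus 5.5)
We argue by contrapositive: we show that every element $\beta\in S\setminus\I$ lies weakly above some broken circuit, so it cannot be $\nbc$.

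First, since $\beta\notin\I$, the set $S_{\leq\beta}\setminus\I$ is nonempty. By finiteness it has a minimal element $\zeta$. We check that $\zeta$ is a circuit, i.e.\ minimal in all of $S\setminus\I$. Suppose $\alpha<\zeta$ with $\alpha\notin\I$. Then $\alpha\leq\beta$, so $\alpha\in S_{\leq\beta}\setminus\I$, which contradicts the minimality of $\zeta$ there. (Equivalently, \eqref{I2} says that $\I$ is an order ideal, so minimal non-independent elements below $\beta$ are minimal non-independent elements overall.)

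Next, let $a=\min_{\ato}\at(\zeta)$. The set $\at(\zeta)$ is nonempty because $\zeta\neq\zero$: indeed $\zero\in\I$ by \eqref{I1} and \eqref{I2}, since $\I$ is nonempty and $\zero$ lies below every element. The simplicial poset gives a complement $\zeta\setminus a\in S_{\leq\zeta}$, and $\zeta\setminus a$ is a broken circuit by \Cref{defn:nbc}. Moreover $\zeta\setminus a\leq\zeta\leq\beta$, so $S_{\leq\beta}$ contains a broken circuit. Hence $\beta$ is not $\nbc$.

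Taking the contrapositive, every $\nbc$ element lies in $\I$, that is, $\nbc(M)\subseteq\I$. No step is a real obstacle. The only care needed is confirming that the minimal non-independent element below $\beta$ is a genuine circuit in $S$, and that circuits are nonzero so that $\min_{\ato}\at(\zeta)$ makes sense; \eqref{I1} and \eqref{I2} handle both.
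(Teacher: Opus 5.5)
Your proof is correct and follows the same approach as the paper: a non-independent $\beta$ lies above some circuit $\zeta$, and therefore above the broken circuit $\zeta\setminus a$ with $a=\min_{\ato}\at(\zeta)$. The paper states this in one line; you additionally spell out two details it leaves implicit, namely that a circuit below $\beta$ exists by finiteness and that $\zeta\neq\zero$, so $\at(\zeta)$ is nonempty.
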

\begin{proof}
If $\beta$ is not independent, then $\beta\geq\zeta\geq\zeta\setminus a$ for some circuit $\zeta$ and $a=\min_{\ato}\at(\zeta)$.
\end{proof}

\begin{lemma}[$\nbc$ is local]\label{lem:localnbc}
Let $\P$ be a locally geometric poset with linear order $\ato$ on $\at(\P)$, and let $x\in\P$ with induced order on $\at(x)$. Then
$\beta\in\nbc(\P_{\leq x})$ if and only if $\beta\in\nbc(\P)$ and $\beta\leq(x,\at(x))$.
\end{lemma}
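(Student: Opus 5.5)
First we pin down the setup. The poset $\P_{\leq x}$ is a geometric lattice with atoms $\at(x)$. Its associated matroid prescheme $M(\P_{\leq x})$ has simplicial poset
\[S(\P_{\leq x})=\{(y,T)\st T\subseteq\at(x),\ y\in\textstyle\bigvee_{\P_{\leq x}}T\}.\]
Since $\P_{\leq x}$ is a lattice, $\bigvee T$ computed in $\P_{\leq x}$ is a single element. It is the unique element of $\bigvee_\P T$ lying below $x$, because $\P_{\leq x}$ is a lattice and joins in a down-set agree with joins in $\P$ when they exist below $x$. We therefore identify $S(\P_{\leq x})$ with the down-set $S(\P)_{\leq (x,\at(x))}$. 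Independence, $\rk(y)=|T|$, is the same condition in both. Since this is a down-set of $S(\P)$, the order, the complements $\omega\setminus a$, and the whole lower interval $S_{\leq\beta}$ of any $\beta$ in it coincide in the two preschemes.

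The key step is to show that circuits agree. A circuit of $M(\P_{\leq x})$ is a minimal non-independent element of $S(\P)_{\leq(x,\at(x))}$. Because this set is a down-set, minimality there is equivalent to minimality in $S(\P)\setminus\I(\P)$. Hence the circuits of $M(\P_{\leq x})$ are exactly the circuits $\zeta$ of $M(\P)$ with $\zeta\leq(x,\at(x))$. The atom order on $\at(x)$ is the induced one, so $\min_{\ato}\at(\zeta)$ is the same computed in either setting. It follows that the broken circuits of $M(\P_{\leq x})$ are exactly the broken circuits $\zeta\setminus a$ of $M(\P)$ whose circuit satisfies $\zeta\leq(x,\at(x))$.

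Now we prove the two directions.

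($\Leftarrow$) Suppose $\beta\in\nbc(\P)$ and $\beta\leq(x,\at(x))$. Then $\beta\in S(\P_{\leq x})$. Any broken circuit of the local prescheme lying below $\beta$ is also a broken circuit of $M(\P)$ below $\beta$. There is none, so $\beta\in\nbc(\P_{\leq x})$.

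($\Rightarrow$) Suppose $\beta\in\nbc(\P_{\leq x})$. Then $\beta\leq(x,\at(x))$ automatically. Suppose for contradiction that some broken circuit $\zeta\setminus a\leq\beta$ in $M(\P)$, where $\zeta$ is a circuit of $M(\P)$ and $a=\min\at(\zeta)$. This is the main obstacle: $\zeta$ itself need not lie below $\beta$, so we must show $\zeta\leq(x,\at(x))$. If $a\in\at(\beta)$, then $\zeta\leq\beta$ in the Boolean lattice $S_{\leq\beta}$, since $\zeta$ and $\beta$ both lie above $\zeta\setminus a$ and $a$. This contradicts $\beta$ being independent, by \Cref{lem:nbc-ind} applied to the local prescheme.

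So assume $a\notin\at(\beta)$. Write $\zeta=(z,\at(\zeta))$. The element $\zeta\setminus a=(w,T)$ is independent, with $w$ the local join of $T$. By \Cref{prop:rank-uni} and the fact that $\zeta$ is a circuit, $\rk(z)=|T|=\rk(w)$. Since $w\leq z$ and the ranks are equal, $z=w$. Since $\zeta\setminus a\leq\beta\leq(x,\at(x))$, we get $z=w\leq x$ and $a\leq z\leq x$, so $a\in\at(x)$. Thus $\zeta=(z,T\cup\{a\})\leq(x,\at(x))$, because $z$ is the join of $T\cup\{a\}$ inside $\P_{\leq x}$. Therefore $\zeta\setminus a$ is a broken circuit of $M(\P_{\leq x})$ lying below $\beta$, a contradiction.
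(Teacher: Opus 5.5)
Your argument is correct and follows the paper's outline. You identify $M(\P_{\leq x})$ with $(S_{\leq\alpha},\I_{\leq\alpha})$ for $\alpha=(x,\at(x))$, note that local circuits are exactly the global circuits below $\alpha$, and reduce to showing that a global circuit $\zeta$ with $\zeta\setminus a\leq\beta$ satisfies $\zeta\leq\alpha$. Where you differ is in how you settle that key step.

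The paper does it in one line: $\zeta$ covers $\zeta\setminus a\leq\alpha$, and $\alpha$ is a flat of $M(\P)$, so $\zeta\leq\alpha$ by \Cref{def:c-f}. Flatness of $(x,\at(x))$ is imported from the cited correspondence between $\P$ and the flats of $M(\P)$.

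You instead compute ranks. By \Cref{prop:rank-uni} and independence of $\zeta\setminus a$, you get $\rk_\P(z)=|\zeta|-1=|T|=\rk_\P(w)$. Then $w\leq z$ forces $z=w\leq x$, hence $a\leq x$ and $\zeta\leq\alpha$ in the product order. Your route is self-contained: it uses only $\rk_M(x,T)=\rk_\P(x)$ and in effect reproves the relevant instance of flatness. The paper's route is shorter and makes the conceptual reason visible: a flat absorbs any circuit covering an element beneath it.

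One blemish: the justification of your first case ($a\in\at(\beta)$) does not work as written. Nothing places $\zeta$ in the Boolean interval $S_{\leq\beta}$. In a simplicial poset, two elements lying above the same pair need not be comparable; in \Cref{fig:ex2}, $(u_1,1234)$ and $(u_2,1234)$ both lie above $(c,123)$ and $4$. The conclusion $\zeta\leq\beta$ is true, but only because of the rank equality $z=w$. Since your second-case argument never uses $a\notin\at(\beta)$, drop the case split and apply the rank argument throughout. It gives $\zeta\leq\alpha$ in every case, and the contradiction with $\beta\in\nbc(\P_{\leq x})$ follows.
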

\begin{proof}
Write $M(\P)=(S,\I)$ and $\alpha=(x,\at(x))$, so that $M(\P_{\leq x})=(S_{\leq\alpha},\I_{\leq \alpha})$. It suffices to prove that an element $\beta\in S_{\leq \alpha}$ is a broken circuit in $M(\P_{\leq x})$ if and only if it is a broken circuit in $M(\P)$. 
The forward direction holds because every circuit of $M(\P_{\leq x})$ is also a circuit of $M(\P)$.
Conversely, suppose that $\beta=\zeta\setminus a$ where $\zeta$ is a circuit in $M(\P)$ and $a=\min_{\ato}\at(\zeta)$. 
Then $\zeta\leq\alpha$ by \Cref{def:c-f}, since $\alpha$ is a flat of $M(\P)$, and therefore $\beta=\zeta\setminus a$ is a broken circuit of $M(\P_{\leq x})$.
\end{proof}

\begin{lemma}\label{lem:nbc-uni}
Let $(S,\I)$ be a matroid prescheme with linear order $\ato$ on the atoms of $S$, and let $\beta\in\I$. Then $\beta$ is $\nbc$ if and only if there is no $\zeta$-unicircular element $\omega$ such that  $a=\min_{\ato}\at(\zeta)$ and $\beta=\omega\setminus a$.
\end{lemma}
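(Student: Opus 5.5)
We prove both directions by contraposition, so that each side of the equivalence becomes an existence statement: $\beta$ is not $\nbc$ if and only if $\beta=\omega\setminus a$ for some $\zeta$-unicircular $\omega$ with $a=\min_{\ato}\at(\zeta)$.

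\emph{The unicircular condition fails $\Rightarrow$ $\beta$ is not $\nbc$.} Suppose $\omega$ is $\zeta$-unicircular, $a=\min_{\ato}\at(\zeta)$, and $\beta=\omega\setminus a$. Since $\zeta\leq\omega$ and $S_{\leq\omega}$ is a Boolean lattice, the complement $\zeta\setminus a$ lies in $S_{\leq\omega}$. Its atoms are $\at(\zeta)\setminus\{a\}\subseteq\at(\omega)\setminus\{a\}=\at(\omega\setminus a)$. In a Boolean lattice, containment of atom sets gives $\zeta\setminus a\leq\omega\setminus a=\beta$. So $\beta$ lies above the broken circuit $\zeta\setminus a$ and is not $\nbc$. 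Independence of $\beta$ is not used here.

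\emph{$\beta$ is not $\nbc$ $\Rightarrow$ the unicircular condition fails.} Suppose $\beta\in\I$ is not $\nbc$. Then some broken circuit satisfies $\zeta\setminus a\leq\beta$, where $\zeta$ is a circuit and $a=\min_{\ato}\at(\zeta)$. We want a $\zeta$-unicircular $\omega$ with $\omega\setminus a=\beta$, built as a join of $\zeta$ and $\beta$.

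First, $a\not\leq\beta$. Otherwise $\zeta\setminus a$ and $a$ both lie in the Boolean lattice $S_{\leq\beta}$, and their join there is the unique element of $S_{\leq\beta}$ with atom set $\at(\zeta)$, namely $\zeta$. This gives $\zeta\leq\beta$, contradicting $\beta\in\I$ by \eqref{I2}.

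Next, $\zeta\setminus a\in\zeta\meet\beta$. It is a common lower bound, and any common lower bound $\lambda\geq\zeta\setminus a$ has $\at(\lambda)\subseteq\at(\zeta)\setminus\{a\}$, since $a\not\leq\beta$. Inside $S_{\leq\zeta}$ this forces $\lambda=\zeta\setminus a$.

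Now apply \Cref{lem:u-join} with $\zeta$ itself as the $\zeta$-unicircular element and $\nu=\beta\in\I$. It gives some $\omega\in\zeta\join\beta$, and $\omega$ is $\zeta$-unicircular. In the Boolean lattice $S_{\leq\omega}$ we have $\at(\omega)=\at(\zeta)\cup\at(\beta)=\at(\beta)\sqcup\{a\}$. Hence $\omega\setminus a$ and $\beta$ both lie in $S_{\leq\omega}$ with the same atom set, so $\omega\setminus a=\beta$, as required.

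The main obstacle is this last construction: producing $\omega$ above both $\zeta$ and $\beta$. In a matroid prescheme joins can be empty, so existence relies on the meet computation above together with \Cref{lem:u-join}, which rests on \eqref{M4'}. The rest is routine atom-set bookkeeping in the Boolean lattices $S_{\leq\omega}$.
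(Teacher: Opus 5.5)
Your route is the paper's route, step for step. The forward direction uses $\zeta\setminus a\leq\omega\setminus a$. For the converse you show $a\not\leq\beta$, compute $\zeta\setminus a\in\zeta\meet\beta$, apply \Cref{lem:u-join} with $\zeta$ as its own unicircular element and $\nu=\beta$, and read off $\omega\setminus a=\beta$ from atom sets. The meet computation, the use of \Cref{lem:u-join}, and the final identification are all correct.

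\textbf{The gap is your justification of $a\not\leq\beta$.} Booleanness of $S_{\leq\beta}$ only gives a unique $\zeta'\in S_{\leq\beta}$ with $\at(\zeta')=\at(\zeta)$, lying above $\zeta\setminus a$ and $a$. The words ``namely $\zeta$'' presuppose $\zeta\leq\beta$, which is exactly what you are trying to prove.

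In a simplicial poset, several elements can share an atom set and all lie above the same codimension-one face and the missing atom. In \Cref{fig:ex2}, both $(u_1,124)$ and $(u_2,124)$ lie above $(y,24)$ and $1$. Nothing in your argument rules out $\zeta'\neq\zeta$ with $\zeta'$ independent, and excluding that needs a matroid axiom, not just the Boolean structure.

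\textbf{A repair using only what you already have.} Suppose $a\leq\beta$ but $\zeta\not\leq\beta$.
\begin{enumerate}
\item Your meet argument gives $\zeta\setminus a\in\zeta\meet\beta$: the only element of $S_{\leq\zeta}$ strictly above $\zeta\setminus a$ is $\zeta$, which is not below $\beta$.
\item \Cref{lem:u-join} (that is, \eqref{M4'}) then yields some $\gamma\in\zeta\join\beta$.
\item In the Boolean lattice $S_{\leq\gamma}$, the inclusion $\at(\zeta)\subseteq\at(\beta)$ forces $\zeta\leq\beta$, a contradiction.
\end{enumerate}
Hence $\zeta\leq\beta$, which violates \eqref{I2}.

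The paper's proof simply asserts ``otherwise $\beta\geq\zeta$'' at this point. So you were right that something needed saying there; the Boolean-lattice reasoning you supplied just does not deliver it.
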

\begin{proof}
If $\omega$ is $\zeta$-unicircular and $a=\min_{\ato}\at(\zeta)$, then $\omega\setminus a\geq\zeta\setminus a$ hence $\omega\setminus a$ is  not $\nbc$.
Conversely, assume that $\beta\in\I$ is not $\nbc$. Then there exists a circuit $\zeta$ with $a=\min_{\ato}\at(\zeta)$ such that $\beta\geq\zeta\setminus a$.
Now $a\not\leq\beta$, since otherwise $\beta\geq\zeta$ contradicts independence, 
so $\zeta\setminus a\in\zeta\meet\beta$.
Then by \Cref{lem:u-join}, there exists some $\omega\in\zeta\join\beta$ and $\omega$ is $\zeta$-unicircular. 
Finally, $\beta=\omega\setminus a$ because $\at(\omega)=\at(\beta)\cup\{a\}$.
\end{proof}

\subsection{Deletion and contraction}\label{sec:del-con}
In this section, we consider the matroid operations of deletion and contraction and demonstrate the relationship between their $\nbc$ elements.

\begin{prop}
Let $M=(S,\I)$ be a simple matroid prescheme, and let $a$ be a distinguished atom of $S$.
Then $M'=(S_{\not\geq a},\I_{\not\geq a})$ is a matroid prescheme called the \textbf{deletion}, and $M''=(S_{\geq a}, \I_{\geq a})$ is a matroid prescheme called the \textbf{contraction}. 
Call $(M,M',M'')$ a \textbf{triple of matroid preschemes}.
\end{prop}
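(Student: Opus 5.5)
The plan is to check directly that both pairs satisfy the definition of a matroid prescheme. For each of $M'$ and $M''$ we first show that the underlying poset is a finite simplicial poset, then compare its joins with the joins in $S$, and finally verify \eqref{I1}--\eqref{I4} by transporting the corresponding axioms from $M$. Two facts drive everything. First, in $S$ every closed interval below an element is Boolean. Second, for $\gamma\in\alpha\join\beta$ we have $\at(\gamma)=\at(\alpha)\cup\at(\beta)$, because $\gamma$ is the join of $\alpha$ and $\beta$ inside the Boolean lattice $S_{\leq\gamma}$.

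\emph{Deletion.} The subposet $S_{\not\geq a}$ is an order ideal of $S$ containing $\zero$. Hence each $S_{\not\geq a}\cap S_{\leq\beta}=S_{\leq\beta}$ is Boolean, and $S_{\not\geq a}$ is simplicial with the same atoms as $S$ except $a$.

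For joins, take $\alpha,\beta\not\geq a$ and $\gamma\in\alpha\join\beta$ computed in $S$. Then $\at(\gamma)=\at(\alpha)\cup\at(\beta)\not\ni a$, so $\gamma\not\geq a$. It follows that joins in $S_{\not\geq a}$ coincide with joins in $S$, and in particular one is nonempty exactly when the other is.

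Now the axioms:
\begin{itemize}
\item \eqref{I1} holds because $\zero\in\I_{\not\geq a}$.
\item \eqref{I2} is inherited from $\I$, since $S_{\not\geq a}$ is downward closed.
\item For \eqref{I3}, apply \eqref{I3} in $M$ to get an atom $b\leq\beta$ with $b\not\leq\alpha$. Automatically $b\neq a$, and the set $(\alpha\join b)\cap S_{\leq\gamma}$ is the same in both posets.
\item For \eqref{I4}, note that $S_{\leq\alpha}$ lies entirely in the deletion. So a maximal independent $\nu\leq\alpha$ is the same in both settings, and $\alpha\join\beta\neq\varnothing$ in $S$ gives the same conclusion in the deletion by the join comparison above.
\end{itemize}

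\emph{Contraction.} Here $S_{\geq a}$ has minimum $a$. For $\beta\geq a$, the interval $[a,\beta]$ is Boolean on $\at(\beta)\setminus\{a\}$, so $S_{\geq a}$ is simplicial. Its atoms are the elements covering $a$, and its rank is $|\beta|-1$.

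Any common upper bound in $S$ of elements $\geq a$ is itself $\geq a$. Hence minimal upper bounds, and therefore joins, computed in $S_{\geq a}$ agree with those computed in $S$.

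Now the axioms:
\begin{itemize}
\item \eqref{I1} uses simplicity: $a$ is not a circuit, so $a\in\I$ and $a\in\I_{\geq a}$.
\item \eqref{I2} is restriction of the order-ideal property of $\I$.
\item For \eqref{I3}, take $\alpha,\beta\in\I_{\geq a}$ with $|\alpha|<|\beta|$ and $\gamma\in\alpha\join\beta$. Applying \eqref{I3} in $M$ gives an atom $b\leq\beta$ of $S$ with $b\not\leq\alpha$; in particular $b\neq a$. The unique element of $(a\join b)\cap S_{\leq\beta}$ is the required atom of the contraction. Moreover, the joins of $\alpha$ with it below $\gamma$ equal $(\alpha\join b)\cap S_{\leq\gamma}$, since $a\leq\alpha$, and these lie in $\I$ and above $a$.
\item For \eqref{I4}, let $\nu$ be maximal in $\I\cap[a,\alpha]$. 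Then $\nu$ is also maximal in $\I\cap S_{\leq\alpha}$, since any independent element above $\nu$ is automatically $\geq a$. Thus \eqref{I4} in $M$ gives $\alpha\join\beta\neq\varnothing$, which is the same set in $S_{\geq a}$.
\end{itemize}

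The main obstacle is the bookkeeping in \eqref{I3} for the contraction: the atom supplied by $M$ is an atom of $S$, not of $S_{\geq a}$. We must check that replacing $b$ by its lift $(a\join b)\cap S_{\leq\beta}$ does not change the set of joins with $\alpha$ below $\gamma$. This comes down to Lemma~\ref{lem:triplejoin}, whose locally-lattice hypothesis holds because each $S_{\leq x}$ is Boolean, together with the uniqueness of joins inside the Boolean lattices $S_{\leq\gamma}$.
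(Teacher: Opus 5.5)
Your proposal is correct, but it takes a different route from the paper. The paper does not check \eqref{I1}--\eqref{I4} directly. Instead it takes the deletion and contraction as defined in \cite{bibby} through their rank functions, $\rk'=\rk|_{S_{\not\geq a}}$ and $\rk''=\rk-1$, where simplicity gives $a\in\I$ and hence $\rk(a)=1$. It then invokes \cite[Thm 6.2]{bibby}, the equivalence between the rank-function and independence descriptions of a matroid prescheme. All that remains is a one-line check that the elements $\nu$ with $\rk'(\nu)=|\nu|$ (resp.\ $\rk''(\nu)=|\nu|_{S_{\geq a}}$) are exactly $\I_{\not\geq a}$ (resp.\ $\I_{\geq a}$). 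Your argument is self-contained and more elementary. Its real work is in two places: showing that joins in $S_{\not\geq a}$ and $S_{\geq a}$ agree with joins in $S$, and, for \eqref{I3} in the contraction, replacing the atom $b$ of $S$ by the atom $b''\in(a\vee b)\cap S_{\leq\beta}$ of $S_{\geq a}$. You then need $(\alpha\vee b'')\cap S_{\leq\gamma}=(\alpha\vee b)\cap S_{\leq\gamma}$, which does hold: $a\leq\alpha$, and $b''$ together with every element of either set lies in the Boolean lattice $S_{\leq\gamma}$. What the paper's route buys is that it simultaneously identifies the rank functions of $M'$ and $M''$ and shows these are the deletion and contraction of \cite{bibby}. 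Those rank formulas are exactly what is used later in \Cref{lem:nbc} and \Cref{thm:del-con}. With your approach you would still need a short extra argument for them. For $\rk'$ it is immediate, since $S_{\not\geq a}$ contains all of $S_{\leq\omega}$. For $\rk''=\rk-1$, you need that because $a\in\I$, augmentation in the local matroid on $S_{\leq\beta}$ gives a maximum-size independent element below $\beta$ that lies above $a$.
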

\begin{proof}
Note that $\I=\{\nu\in S\st \rk(\nu)=|\nu|\}$ where $\rk\colon S\to\Z_{\geq0}$ is the rank function of $M$ from \Cref{def:rank}. 
The deletion is defined in \cite[Defn 10.2]{bibby} by its rank function $\rk'=\rk|_{S\not\geq a}\colon S_{\not\geq a}\to\Z_{\geq 0}$. 
By \cite[Thm 6.2]{bibby}, it suffices to check that $\I_{\not\geq a} = \{\nu\in S_{\not\geq a} \st \rk'(\nu)=|\nu|\}$.
But this is true because, for $\nu\in S_{\not\geq a}$,  $\rk'(\nu)=|\nu|$ if and only if $\rk(\nu)=|\nu|$.
The contraction is defined in \cite[Defn 10.5]{bibby} by its rank function $\rk''\colon S_{\geq a}\to\Z_{\geq 0}$ with $\rk''(\beta)=\rk(\beta)-\rk(a)=\rk(\beta)-1$ (since $M$ is simple, $a\in\I$). Here, it suffices to check  $\I_{\geq a} = \{\nu\in S_{\geq a} \st \rk''(\nu)=|\nu|_{S_{\geq a}} = |\nu|_S-1\}$. 
But this is true because, for $\nu\in S_{\geq a}$, $\rk''(\nu)=|\nu|_S-1$ if and only if $\rk(\nu)=|\nu|_S$.
\end{proof}

\begin{example}
Using the matroid prescheme $M$ from \Cref{fig:ex2} and distinguished atom 4, the matroid preschemes $M'$ and $M''$ are shown in \Cref{fig:M',fig:M''}, respectively. In each, the independent elements are boxed. 
Note that the linear order $1\ato2\ato3\ato4$ on the atoms of $S$ induces the order $1\ato2\ato3$ on the atoms of $S_{\not\geq 4}$, and it induces the order $(x,14)\ato(y,24)\ato(z,24)$ on the atoms of $S_{\geq 4}$.
The corresponding $\nbc$ elements are boxed in bold in \Cref{fig:M',fig:M''}.
Observe that, as the next lemma states, $\nbc(M)=\nbc(M')\sqcup\nbc(M'')$.

\begin{figure}[ht]
\begin{subfigure}[t]{.35\textwidth}
\centering
\begin{tikzpicture}
\node[draw,rectangle,very thick] (0) at (0,0) {\scriptsize$\hat{0}$};
\node[draw,rectangle,very thick] (1) at (-1.5,1) {\scriptsize1};
\node[draw,rectangle,very thick] (2) at (0,1) {\scriptsize2};
\node[draw,rectangle,very thick] (3) at (1.5,1) {\scriptsize3};
\foreach \x in {1,2,3} {\draw[-] (0.north)--(\x.south);};
\node[draw,rectangle,very thick] (12) at (-1.5,2.5) {\scriptsize$(c,12)$};
\node[draw,rectangle,very thick] (13) at (0,2.5) {\scriptsize$(c,13)$};
\node[draw,rectangle] (23) at (1.5,2.5) {\scriptsize$(c,23)$};
\node (123) at (0,3.5) {\scriptsize$(c,123)$};
\foreach \x in {12,13} {\draw[-] (1.north)--(\x.south);};
\foreach \x in {12,23} {\draw[-] (2.north)--(\x.south);};
\foreach \x in {13,23} {\draw[-] (3.north)--(\x.south);};
\foreach \x in {12,13,23} {\draw[-] (\x.north)--(123.south);};
\end{tikzpicture}
\caption{$M'$}
\label{fig:M'}
\end{subfigure}
\begin{subfigure}[t]{.6\textwidth}
\centering
\begin{tikzpicture}
\node[draw,rectangle,very thick] (4) at (0,0) {\scriptsize4};
\node[draw,rectangle,very thick] (14) at (-1.5,1) {\scriptsize$(x,14)$};
\node[draw,rectangle,very thick] (24) at (0,1) {\scriptsize$(y,24)$};
\node[draw,rectangle,very thick] (34) at (1.5,1) {\scriptsize$(z,34)$};
\node[draw,rectangle,very thick] (1124) at (-4,2.5) {\scriptsize$(u_1,124)$};
\node[draw,rectangle,very thick] (1134) at (-2.5,2.5) {\scriptsize$(u_1,134)$};
\node[draw,rectangle] (1234) at (-1,2.5) {\scriptsize$(u_1,234)$};
\node[draw,rectangle,very thick] (2124) at (1,2.5) {\scriptsize$(u_2,124)$};
\node[draw,rectangle,very thick] (2134) at (2.5,2.5) {\scriptsize$(u_2,134)$};
\node[draw,rectangle] (2234) at (4,2.5) {\scriptsize$(u_2,234)$};
\node (u1) at (-2.5,3.5) {\scriptsize$(u_1,1234)$};
\node (u2) at (2.5,3.5) {\scriptsize$(u_2,1234)$};
\foreach \x in {1124,1134,1234} {\draw[-] (u1.south)--(\x.north);};
\foreach \x in {2124,2134,2234} {\draw[-] (u2.south)--(\x.north);};
\foreach \x in {14,24,34} {\draw[-] (4.north)--(\x.south);};
\foreach \x in {1124,2124,1134,2134} {\draw[-] (14.north)--(\x.south);};
\foreach \x in {1124,2124,1234,2234} {\draw[-] (24.north)--(\x.south);};
\foreach \x in {1134,2134,1234,2234} {\draw[-] (34.north)--(\x.south);};
\end{tikzpicture}
\caption{$M''$}
\label{fig:M''}
\end{subfigure}
\caption{The deletion and contraction of $M$ from \Cref{fig:ex2} with respect to atom 4.}
\end{figure}
\end{example}

\begin{prop}\label{lem:nbc}
Let $M=(S,\I)$ be a simple matroid prescheme with linear order $\ato$ on the atoms of $S$.
Let $(M,M',M'')$ be the triple of matroid preschemes with respect to the distinguished atom $a=\max_{\ato}\at(S)$,
with linear order on the atoms of $S_{\geq a}$ and $S_{\not\geq a}$ induced by $\ato$.
    Then the set of $\nbc$-elements of $M$ is the disjoint union of $\nbc$-elements of the deletion $M'$ and $\nbc$-elements of the contraction $M''$:
    \[ \nbc(M) = \nbc(M')\sqcup \nbc(M'')\]
\end{prop}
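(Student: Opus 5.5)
The plan is to split $\nbc(M)$ according to whether an element lies above the distinguished atom $a$. Since $S=S_{\not\geq a}\sqcup S_{\geq a}$, it suffices to prove two equivalences. First, for $\beta\not\geq a$: $\beta\in\nbc(M)$ if and only if $\beta\in\nbc(M')$. Second, for $\beta\geq a$: $\beta\in\nbc(M)$ if and only if $\beta\in\nbc(M'')$. Throughout I would use one consequence of simplicity: every circuit $\zeta$ of $M$ has at least $3$ atoms. Indeed, atoms are independent, and a $2$-atom circuit would make two atoms parallel, contradicting that atoms are flats. Since $a=\max_{\ato}\at(S)$, it follows that $a\neq\min_{\ato}\at(\zeta)$ for every circuit $\zeta$, and that $\min_{\ato}\at(\zeta)=\min_{\ato}(\at(\zeta)\setminus\{a\})$.

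\emph{Deletion.} The circuits of $M'$ are exactly the circuits of $M$ not lying above $a$: $\I_{\not\geq a}=\I\cap S_{\not\geq a}$, and $S_{\not\geq a}$ is a down-set. Hence every broken circuit of $M'$ is a broken circuit of $M$, giving $\nbc(M)\cap S_{\not\geq a}\subseteq\nbc(M')$. Conversely, suppose $\beta\not\geq a$ lies above a broken circuit $\zeta\setminus b$ of $M$, with $b=\min_{\ato}\at(\zeta)$. If $a\leq\zeta$, then $a\neq b$ by the remark above, so $a\leq\zeta\setminus b\leq\beta$, a contradiction. Thus $\zeta\not\geq a$ is a circuit of $M'$, and $\beta$ is not $\nbc$ in $M'$.

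\emph{Contraction.} Here I would work through \Cref{lem:nbc-uni} rather than directly with broken circuits. Both sides force independence, and $\I_{\geq a}=\I\cap S_{\geq a}$ (by \Cref{lem:nbc-ind}), so I may assume $\beta\in\I$ with $\beta\geq a$. The atoms of $S_{\geq a}$ below an element $\omega\geq a$ are the elements $\omega_c$ of $S_{\leq\omega}$ with atom set $\{a,c\}$, one for each $c\in\at(\omega)\setminus\{a\}$. The induced order compares them via $c$, and removing the atom $\omega_c$ in $M''$ corresponds to removing $c$ in $M$. Since $\rk''=\rk-1$ and $|\cdot|''=|\cdot|-1$, \Cref{prop:rank-uni} shows that $\omega\geq a$ is unicircular in $M''$ if and only if it is unicircular in $M$.

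I then need to match the unique circuits. If $\omega$ is $\zeta$-unicircular in $M$, I claim its unique $M''$-circuit $\zeta''$ is the element of $S_{\leq\omega}$ with atom set $\at(\zeta)\cup\{a\}$. This element is dependent because it lies above $\zeta$. It is minimal in $S_{\geq a}$ because removing any $c\neq a$ from $\at(\zeta)\cup\{a\}$ destroys $\zeta$, and $\zeta$ is the only circuit below $\omega$. Hence the $M''$-minimal atom of $\zeta''$ corresponds to $\min_{\ato}(\at(\zeta)\setminus\{a\})=\min_{\ato}\at(\zeta)$, using the first paragraph. Combining this with \Cref{lem:nbc-uni} in both preschemes gives the equivalence: $\beta=\omega\setminus c$ with $\omega$ $\zeta$-unicircular and $c=\min\at(\zeta)$ in $M$ if and only if $\beta=\omega\setminus\omega_c$ with the corresponding data in $M''$. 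Note also that $\omega\geq\beta\geq a$ automatically, so $\omega$ lies in $S_{\geq a}$.

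I expect the main obstacle to be this identification of $\zeta''$, in particular checking minimality carefully within the Boolean lattice $S_{\leq\omega}$. The other delicate point is ensuring that the minimum atom never coincides with $a$, and this is exactly where simplicity and the choice $a=\max_{\ato}\at(S)$ are used.
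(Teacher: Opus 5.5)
Your proof is correct and follows essentially the paper's route. You split by whether an element lies above $a$, use maximality of $a$ (with simplicity ruling out circuits on fewer than two atoms) to see that $a$ is never the minimal atom of a circuit, and handle the contraction via \Cref{lem:nbc-uni} and \Cref{prop:rank-uni}. The differences are minor: your deletion half argues directly with broken circuits rather than unicircular elements, and your explicit identification of the $M''$-circuit below $\omega$ (the element with atom set $\at(\zeta)\cup\{a\}$, whose minimal atom matches $\min_{\ato}\at(\zeta)$) fills in a step that the paper's contraction argument leaves implicit.
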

\begin{proof}
Since $\I=\I_{\geq a}\sqcup \I_{\not\geq a}$ and $\nbc$ elements are independent (\Cref{lem:nbc-ind}), it suffices to show that $\nbc(M')=\nbc(M)\cap\I_{\not\geq a}$ and $\nbc(M'')=\nbc(M)\cap\I_{\geq a}$.

First consider $\beta\in\I_{\not\geq a}$.
Suppose $\beta\notin\nbc(M')$. Then by \Cref{lem:nbc-uni}, there exists a $\zeta$-unicircular element $\omega$ in $M'$ such that $\beta=\omega\setminus a_0$ with $a_0=\min_{\ato}\at(\zeta)$.
Then, using \Cref{prop:rank-uni}, $\rk(\omega)=\rk'(\omega)=|\omega|_{S\not\geq a}-1=|\omega|_{S}-1$ and thus $\omega$ is $\zeta$-unicircular in $M$. Consequently, $\beta=\omega\setminus a_0\notin\nbc(M)$.
Conversely, suppose $\beta\notin\nbc(M)$.
Then by \Cref{lem:nbc-uni}, there exists a $\zeta$-unicircular element $\omega$ in $M$ such that $\beta=\omega\setminus a_0$ with $a_0=\min_{\ato}\at(\zeta)$. 
By maximality of $a$, $a\neq a_0$.
This implies $\omega\in S_{\not\geq a}$ and, since $\rk'(\omega)=\rk(\omega)=|\omega|-1$, $\omega$ is $\zeta$-unicircular in $M'$. 
Thus, $\beta=\omega\setminus a_0\notin\nbc(M')$.

Now consider $\beta\in\I_{\geq a}$. 
Suppose $\beta\notin\nbc(M'')$. 
Then by \Cref{lem:nbc-uni}, there exists a $\zeta$-unicircular element $\omega$ in $M''$ such that $\beta=\omega\setminus a_0$ with $a_0=\min_{\ato}\at(\zeta)$.
Then, using \Cref{prop:rank-uni}, $\rk(\omega)=\rk''(\omega)+1=|\omega|_{S_{\geq a}} = |\omega|_S-1$ and thus $\omega$ is $\zeta$-unicircular in $M$.
Consequently, $\beta=\omega\setminus a_0\notin\nbc(M)$.
Conversely, suppose $\beta\notin\nbc(M)$. 
Then by \Cref{lem:nbc-uni}, there exists a $\zeta$-unicircular element $\omega$ in $M$ such that $\beta=\omega\setminus a_0$ with $a_0=\min_{\ato}\at(\zeta)$. Now, $\omega\in S_{\geq a}$ because $\omega\geq\beta\geq a$ and, since $\rk''(\omega)=\rk(\omega)-1=|\omega|_S-2=|\omega|_{S\geq a}-1$, $\omega$ is $\zeta$-unicircular in $M''$. Thus, $\beta\notin\nbc(M'')$.
\end{proof}

\section{The Orlik--Solomon algebra}\label{sec:OS}
\subsection{The construction of the Orlik--Solomon algebra}\label{sec:OSdef}
Let $\P$ be a locally geometric poset, $(S,\I)$ its associated matroid prescheme, and $E=E(\P)$
the graded-commutative $\mathbb{Q}$-algebra generated by $e_\alpha$ for $\alpha\in\mathcal{I}\setminus\{\hat0\}$, where $\deg(e_\alpha)=|\alpha|$.
Fix a linear order on the atoms of $\I$, and write $\at(\alpha)$ in increasing order accordingly.
We may then define the function $\sgn:\mathcal{I}\times\mathcal{I}\to\{-1,1\}$ which takes $(\alpha,\beta)\in\I\times\I$ to the sign of the permutation needed to sort $(\at(\alpha),\at(\beta))$ into ascending order. Since all $\gamma\in\alpha\join\beta$ share the same set of atoms, given $\alpha,\beta,\nu\in\I$ we may write $\sgn(\alpha\join\beta,\nu)$ and this is well-defined.

\begin{defn}[OS Algebra]\label{defn:OS}
    Let $J\subseteq E$ be the ideal generated by the following:
    \begin{enumerate}
    \item[\mylabel{OSi}{i}] $e_\alpha e_\beta$ when $\alpha\meet\beta\neq\{\hat{0}\}$;
    \item[\mylabel{OSii}{ii}] $e_\alpha e_\beta-\sgn(\alpha,\beta)\sum\limits_{\gamma\in(\alpha\join\beta)\cap\I}e_\gamma$ when $\alpha\meet\beta=\{\hat{0}\}$; and
    \item[\mylabel{OSiii}{iii}] $\sum\limits_{a\in\at(\zeta)}\sgn(a,\omega\setminus a)e_{\omega\setminus a}$ when $\omega$ is a $\zeta$-unicircular element.
    \end{enumerate}
    We call $J$ the Orlik--Solomon ideal of $\mathcal{P}$, and $A(\mathcal{P}):=E/J$ the \textbf{Orlik--Solomon algebra} of $\mathcal{P}$.
\end{defn}

\begin{example}\label{ex:OS}
 Consider the matroid prescheme shown in \Cref{fig:ex2}. In this case, $E$ has 16 generators:
 $e_1$, $e_2$, $e_3$, and $e_4$ in degree 1; 
 $e_{(c,12)}$, $e_{(c,13)}$, $e_{(c,23)}$, $e_{(x,14)}$, $e_{(y,24)}$, and $e_{(z,34)}$ in degree 2; 
 and $e_{(u_1,124)}$, $e_{(u_1,134)}$, $e_{(u_1,234)}$, $e_{(u_2,124)}$, $e_{(u_2,134)}$, $e_{(u_2,234)}$ in degree 3. 
 The ideal $J$ is generated by 
 over 100
 elements; we will point out a few illustrative ones.
 For example, the type \eqref{OSii} generator  
\begin{equation}\label{eq:ex2:ii}
    e_4e_{(c,23)}-e_{(u_1,234)}-e_{(u_2,234)}
\end{equation}
arises from $4\vee(c,23)=\{(u_1,234),(u_2,234)\}$, so that in the quotient we have the product
$e_4e_{(c,23)} = e_{(u_1,234)}+e_{(u_2,234)}$. 
The two unicircular elements $(u_1,1234)$ and $(u_2,1234)$, which share the circuit $(c,123)$ and set of atoms $\{1,2,3,4\}$, give rise to the type \eqref{OSiii} generators 
    \begin{equation}\label{eq:ex2:iii}
    e_{(u_1,234)}-e_{(u_1,134)}+e_{(u_1,124)} \qquad \text{and} \qquad
   e_{(u_2,234)}-e_{(u_2,134)}+e_{(u_2,124)}. 
   \end{equation}
Additionally, the circuit $(c,123)$ is unicircular itself, and thus gives rise to its own type \eqref{OSiii} generator
\begin{equation}\label{eq:ex2:iiic}
    e_{(c,23)}-e_{(c,13)}+e_{(c,12)}.
\end{equation}
\end{example}

The following provides an alternative presentation that considers all dependent elements. This presentation, perhaps, makes the relationship to the classical case (\Cref{prop:classicalOS}) more apparent, but it is less efficient, meaning that it has many more generators and relations. 

\begin{prop}[Alternative presentation]\label{prop:altOS}
Let $\P$ be a locally geometric poset. Then the graded algebra $A(\P)$ is isomorphic to the quotient $\bar{E}/\bar{J}$ where $\bar{E}$ is the graded-commutative algebra generated by $\bar{e}_\alpha$ in degree $|\alpha|$ for $\alpha\in S\setminus\{\zero\}$, and $\bar{J}\subseteq\bar{E}$ is the ideal generated by
\begin{enumerate}
\item[(0)] $\bar{e}_\alpha$ whenever $\alpha\notin\I$,
\item[($\bar{\imath}$)] $\bar{e}_\alpha\bar{e}_\beta$ whenever $\alpha\meet\beta\neq\{\zero\}$,
\item[($\ol{\imath\imath}$)] $\bar{e}_\alpha\bar{e}_\beta-\sgn(\alpha,\beta)\displaystyle\sum_{\gamma\in\alpha\vee\beta} \bar{e}_\gamma$ whenever $\alpha\meet\beta=\{\zero\}$, and
\item[($\ol{\imath\imath\imath}$)]
$\displaystyle\sum_{a\in\at(\omega)} \sgn(a,\omega\setminus a) \bar{e}_{\omega\setminus a}$ whenever $\omega\notin\I$.
\end{enumerate}
The isomorphism is induced by the inclusion $E\into\bar{E}$ with $e_\alpha\mapsto\bar{e}_\alpha$.
\end{prop}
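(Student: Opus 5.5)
We construct mutually inverse maps. Let $\phi\colon E\to\bar E/\bar J$ be the composite $E\into\bar E\to\bar E/\bar J$, and let $\psi\colon\bar E\to E/J$ send $\bar e_\alpha\mapsto e_\alpha$ for $\alpha\in\I$ and $\bar e_\alpha\mapsto 0$ for $\alpha\notin\I$. Both are well-defined algebra maps because the sources are free graded-commutative algebras and generators go to elements of the same degree. We then show $\phi(J)=0$ and $\psi(\bar J)=0$. This gives induced maps $\bar\phi\colon E/J\to\bar E/\bar J$ and $\bar\psi\colon\bar E/\bar J\to E/J$. The composite $\bar\psi\bar\phi$ is the identity on the generators $e_\alpha$. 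The composite $\bar\phi\bar\psi$ is the identity on every $\bar e_\alpha$: for $\alpha\in\I$ this is clear, and for $\alpha\notin\I$ both $\bar e_\alpha$ and its image vanish by (0).

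\emph{Step 1: $\phi(J)=0$.} Type \eqref{OSi} maps to a ($\bar\imath$) generator. For type \eqref{OSii}, the element $\bar e_\alpha\bar e_\beta$ equals $\sgn(\alpha,\beta)\sum_{\gamma\in\alpha\vee\beta}\bar e_\gamma$ modulo ($\ol{\imath\imath}$). By (0) the terms with $\gamma\notin\I$ die, which leaves exactly the image of \eqref{OSii}. For type \eqref{OSiii}, take $\omega$ $\zeta$-unicircular; then $\omega\notin\I$, so ($\ol{\imath\imath\imath}$) applies to $\omega$. By \Cref{lem:u-a-ind}, $\omega\setminus a\in\I$ exactly when $a\in\at(\zeta)$. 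The terms with $a\notin\at(\zeta)$ therefore vanish by (0), and what remains is the image of \eqref{OSiii}.

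\emph{Step 2: $\psi(\bar J)=0$.} Relations (0) and ($\bar\imath$) are immediate: if $\alpha$ or $\beta$ is dependent the product already maps to $0$, and otherwise the image is an \eqref{OSi} generator. For ($\ol{\imath\imath}$), if $\alpha,\beta\in\I$ we get \eqref{OSii}. If instead, say, $\alpha\notin\I$, then every $\gamma\in\alpha\vee\beta$ lies above $\alpha$ and is dependent by \eqref{I2}, so everything maps to $0$. The real content is ($\ol{\imath\imath\imath}$). Take a dependent $\omega$ and let $r=|\omega|-\rk(\omega)\ge1$. If $r\ge2$, then no $\omega\setminus a$ is independent, by \Cref{prop:rank-uni} applied via \Cref{lem:u-a-ind} (an independent $\omega\setminus a$ would force $\omega$ to be unicircular), so the image is $0$. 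If $r=1$, then $\omega$ is unicircular by \Cref{prop:rank-uni}, and by \Cref{lem:u-a-ind} the surviving terms are exactly those with $a\in\at(\zeta)$; this is precisely \eqref{OSiii}.

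The main obstacle is Step 2 for ($\ol{\imath\imath\imath}$), namely checking that independence of $\omega\setminus a$ occurs only in the unicircular case and only for $a\in\at(\zeta)$. \Cref{lem:u-a-ind} settles this directly, since it states that $\omega\setminus a\in\I$ iff $\omega$ is $\zeta$-unicircular with $a\le\zeta$. The remaining checks are bookkeeping: the signs $\sgn(a,\omega\setminus a)$ agree on both sides, and degrees are preserved since $|\alpha|$ is the same grading in $E$ and $\bar E$.
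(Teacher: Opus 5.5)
Your proof is correct and is essentially the paper's argument. The paper sets $\bar J_0=\langle \bar e_\alpha : \alpha\notin\I\rangle$, uses $E\cong\bar E/\bar J_0$, and proves $\bar J = j(J)+\bar J_0$; its two inclusions are exactly your statements $\psi(\bar J)=0$ and $\phi(J)=0$, checked generator by generator with the same use of \eqref{I2} and \Cref{lem:u-a-ind}.
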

\begin{proof}
Let $j\colon E\into\bar{E}$ be the inclusion, and let $\bar{J}_0:=\langle\bar{e}_\alpha\st\alpha\notin\I\rangle\subseteq\bar{J}\subseteq\bar{E}$.
Using the isomorphism $E\cong \bar{E}/\bar{J}_0$, it suffices to see that $\bar{J}=j(J)+\bar{J}_0$.
Consider $\alpha,\beta\in S$.
If either $\alpha\notin\I$ or $\beta\notin\I$, then $\bar{e}_\alpha\bar{e}_\beta\in\bar{J}_0$ and $\bar{e}_\gamma\in\bar{J}_0$ for any $\gamma\in\alpha\join\beta$.
If $\alpha\in\I$ and $\beta\in\I$, then $\bar{e}_\alpha\bar{e}_\beta=j(e_\alpha e_\beta)\in j(J)$ and
\[ \bar{e}_\alpha\bar{e}_\beta - \sgn(\alpha,\beta)\sum_{\gamma\in\alpha\join\beta} \bar{e}_\gamma
= j\left( e_\alpha e_\beta-\sgn(\alpha,\beta)\sum_{\gamma\in(\alpha\join\beta)\cap\I} {e}_\gamma \right) - \sgn(\alpha,\beta)\sum_{\gamma\in(\alpha\join\beta)\setminus\I} \bar{e}_\gamma
\in j(J)+\bar{J}_0.\]
Now consider $\omega\in S\setminus\I$.
If $\omega$ is not unicircular, then $\bar{e}_{\omega\setminus a}\in\bar{J}_0$ for all $a\in\at(\omega)$.
If $\omega$ is $\zeta$-unicircular, then by \Cref{lem:u-a-ind},
\[  \sum_{a\in\at(\omega)} \sgn(a,\omega\setminus a) \bar{e}_{\omega\setminus a} 
= j\left( \sum_{a\in\at(\zeta)} \sgn(a,\omega\setminus a)e_{\omega\setminus a} \right) + \sum_{a\in\at(\omega)\setminus \at(\zeta)} \sgn(a,\omega\setminus a)\bar{e}_{\omega\setminus a} 
\in j(J)+\bar{J}_0.\]
Therefore, $\bar{J}\subseteq j(J)+\bar{J}_0$. Since $\bar{J}_0\subseteq\bar{J}$, rearranging the above equalities yields $j(J)\subseteq\bar{J}+\bar{J}_0=\bar{J}$.
\end{proof}

The next proposition relates our algebra to the classical Orlik--Solomon algebra for a geometric semilattice (see \cite{OS,yuz}), proving that ours is a true generalization.

\begin{prop}[Classical OS Algebra]\label{prop:classicalOS}
Let $\P$ be a geometric semilattice with a fixed linear order on $\at(\P)$. Then the graded algebra $A(\P)$ is isomorphic to the quotient $\tilde{E}/\tilde{J}$, where $\tilde{E}$ is the exterior algebra generated in degree 1 by $\tilde{e}_a$ for $a\in\at(\P)$, and $\tilde{J}\subseteq\tilde{E}$ is the ideal generated by
\begin{enumerate}
\item[($\tilde{\imath}$)] $\tilde{e}_{a_1}\cdots \tilde{e}_{a_k}$ whenever $a_{1}\vee\cdots\vee a_{k}=\varnothing$, and
\item[($\widetilde{\imath\imath\imath}$)] $\sum_{j=1}^k (-1)^{j-1} \tilde{e}_{a_1}\cdots\hat{\tilde{e}}_{a_j}\cdots\tilde{e}_{a_k}$ whenever $a_{1}\vee\cdots\vee a_{k}\not\subseteq\I$ and $a_1\ato\cdots\ato a_k$.
\end{enumerate}
The isomorphism is induced by the inclusion $\tilde{E}\into E$ with $\tilde{e}_a\mapsto e_{a}$ for $a\in\at(\P)$.
\end{prop}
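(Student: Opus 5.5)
The plan is to build mutually inverse algebra maps between $\tilde{E}/\tilde{J}$ and $A(\P)=E/J$. The key simplification is that in a geometric semilattice every nonempty join is a single element. So an element $\alpha=(x,T)$ of $S(\P)$ is determined by its atom set $T$, and $S(\P)$ is the simplicial complex of subsets $T\subseteq\at(\P)$ with $\bigvee T\neq\varnothing$. I will write $\alpha_T$ for this element, and $\tilde{e}_T:=\tilde{e}_{a_1}\cdots\tilde{e}_{a_k}$ when $T=\{a_1\ato\cdots\ato a_k\}$.

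\textbf{Step 1: the map $\phi$.} Define $\phi\colon\tilde{E}\to A(\P)$ by $\tilde{e}_a\mapsto e_a$. This is well defined because the $e_a$ have degree $1$ in a graded-commutative $\Q$-algebra, so they anticommute and square to zero. I will prove by induction on $k$, repeatedly applying relations \eqref{OSi} and \eqref{OSii} to $e_{\alpha_{T'}}e_{a_k}$, that
\[
\phi(\tilde{e}_T)=\begin{cases} e_{\alpha_T} & \text{if } \bigvee T\neq\varnothing \text{ and } \alpha_T\in\I,\\ 0&\text{otherwise.}\end{cases}
\]
The sign $\sgn(\alpha_{T'},a_k)$ is $+1$ because the atoms are listed in increasing order. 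At each stage the join is either empty, giving $0$ by \eqref{OSii} with an empty sum, or a single element that is kept only if it is independent.

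From this formula, the following hold.
\begin{itemize}
\item $\phi$ is surjective, since every generator $e_\alpha$ is hit.
\item $\phi$ kills the generators ($\tilde{\imath}$).
\item For a generator ($\widetilde{\imath\imath\imath}$) with $\omega=\alpha_T\notin\I$, the term for $a_j$ survives only when $\omega\setminus a_j\in\I$. By \Cref{lem:u-a-ind} this happens exactly when $\omega$ is $\zeta$-unicircular and $a_j\in\at(\zeta)$.
\end{itemize}
The image of ($\widetilde{\imath\imath\imath}$) is therefore $0$ if $\omega$ is not unicircular, and is the \eqref{OSiii} relation for $\omega$ otherwise. Here I use that $(-1)^{j-1}=\sgn(a_j,\omega\setminus a_j)$. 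Hence $\phi$ descends to a surjection $\bar\phi\colon\tilde{E}/\tilde{J}\to A(\P)$.

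\textbf{Step 2: the inverse map $\psi$.} Define $\psi\colon E\to\tilde{E}/\tilde{J}$ by $e_{\alpha_T}\mapsto\tilde{e}_T$, which is a degree-preserving assignment. First I need a lemma: if $\bigvee T\neq\varnothing$ and $\alpha_T\notin\I$, then $\tilde{e}_T\in\tilde{J}$. To prove it, choose a circuit $\alpha_C\leq\alpha_T$ and $a\in C$. Multiplying the ($\widetilde{\imath\imath\imath}$) relation for $C$ by $\tilde{e}_a$ kills every term except the one omitting $a$, which leaves $\pm\tilde{e}_C\in\tilde{J}$. Since $\tilde{e}_C$ divides $\tilde{e}_T$ up to sign, the lemma follows.

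Then I check that $\psi$ sends each generator of $J$ into $\tilde{J}$.
\begin{itemize}
\item \eqref{OSi}: $\alpha\meet\beta\neq\{\zero\}$ forces a shared atom, so $\tilde{e}_T\tilde{e}_{T'}=0$ already in the exterior algebra.
\item \eqref{OSii}: $\tilde{e}_T\tilde{e}_{T'}=\sgn(\alpha,\beta)\tilde{e}_{T\cup T'}$. If the join is empty this lies in ($\tilde{\imath}$). If the join is a dependent element, it lies in $\tilde{J}$ by the lemma and the sum over $(\alpha\join\beta)\cap\I$ is empty. Otherwise the two sides agree.
\item \eqref{OSiii}: the image differs from the ($\widetilde{\imath\imath\imath}$) relation for $\omega$ only by the terms with $a\notin\at(\zeta)$. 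These have $\omega\setminus a$ dependent by \Cref{lem:u-a-ind}, so they lie in $\tilde{J}$ by the lemma.
\end{itemize}
Thus $\psi$ descends to $\bar\psi\colon A(\P)\to\tilde{E}/\tilde{J}$.

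\textbf{Step 3: the maps are inverse.} The composite $\bar\psi\bar\phi$ is the identity on the generators $\tilde{e}_a$. The composite $\bar\phi\bar\psi$ sends $e_{\alpha_T}$ to $\phi(\tilde{e}_T)=e_{\alpha_T}$ for independent $\alpha_T$. Hence the two maps are mutually inverse isomorphisms.

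I expect the main obstacle to be sign bookkeeping: matching $(-1)^{j-1}$ and the $\sgn$ factors, and confirming that the inductive formula in Step 1 introduces no stray signs. The substantive inputs are the semilattice property (joins are unique, so $T$ determines $\alpha_T$) and the circuit-multiplication lemma in Step 2.
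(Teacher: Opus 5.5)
Your proof is correct, and it differs from the paper's mainly in how the argument is organized; the key ideas are the same.

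\textbf{The paper's route.} The paper does not compare $\tilde{E}/\tilde{J}$ with $E/J$ directly. It passes through the alternative presentation $\bar{E}/\bar{J}$ of \Cref{prop:altOS}. There every element of $S$, dependent ones included, has a generator, and relation $(\ol{\imath\imath\imath})$ sums over all of $\at(\omega)$, so it already matches $(\widetilde{\imath\imath\imath})$ term by term. The paper then introduces the ideal $\bar{J}_m$, which identifies $\bar{e}_\alpha$ with the ordered product of its atom generators. It notes $\tilde{E}\cong\bar{E}/\bar{J}_m$ and asserts $\bar{J}=\iota(\tilde{J})+\bar{J}_m$. The semilattice hypothesis enters only to see that $\bar{J}_m$ absorbs the relations $(\bar{\imath})$ and the relations $(\ol{\imath\imath})$ with nonempty join.

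\textbf{Your route.} You work directly with the efficient presentation of \Cref{defn:OS} and construct $\bar{\phi}$ together with an explicit inverse $\bar{\psi}$. This forces you to redo, inside this proof, the bookkeeping that \Cref{prop:altOS} already performs. Concretely, you use \Cref{lem:u-a-ind} to handle the terms with $a\notin\at(\zeta)$: they vanish under $\phi$, and under $\psi$ they land in $\tilde{J}$.

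\textbf{Shared ingredients.} Both arguments rest on two facts:
\begin{itemize}
\item nonempty joins are unique in a semilattice;
\item multiplying a $(\widetilde{\imath\imath\imath})$ relation by one of its atom generators puts $\tilde{e}_T$ in $\tilde{J}$ when $\alpha_T$ is dependent.
\end{itemize}
On the second point, the paper applies the trick to $T$ itself, multiplying by $\tilde{e}_{a_1}$. Since $\bigvee T\neq\varnothing$ and $\alpha_T\notin\I$, the $(\widetilde{\imath\imath\imath})$ relation for $T$ is already in $\tilde{J}$. So your detour through a circuit $\alpha_C\leq\alpha_T$ is harmless but unnecessary.

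\textbf{What each buys.} The paper's version is shorter because it reuses \Cref{prop:altOS}, but it only lists the key congruences and leaves one containment largely implicit. Yours is self-contained relative to \Cref{defn:OS}. It gives an explicit formula for $\phi(\tilde{e}_T)$ and an explicit two-sided inverse, and it verifies both directions in full.
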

\begin{proof}
Consider the presentation $A(\P)\cong\bar{E}/\bar{J}$ from \Cref{prop:altOS} and let 
\[\bar{J}_m:=\langle \bar{e}_{\alpha}-\bar{e}_{a_{1}}\cdots\bar{e}_{a_{k}}\st \at(\alpha) = \{a_{1}<\cdots<a_{k}\}\rangle\subseteq\bar{E}.\]
The inclusion $\iota\colon \tilde{E}\to\bar{E}$ induces an isomorphism $\tilde{E}\cong\bar{E}/\bar{J}_m$. We claim that $\bar{J}=\iota(\tilde{J})+\bar{J}_m$.

Since $\P$ is a geometric semilattice, so is $S$, thus every element $\alpha\in S$ is determined by its set of atoms, explicitly   $\bigvee\at(\alpha)=\{\alpha\}$.
This implies that $\bar{J}_m$ contains the elements of type $(\bar{\imath})$, and the elements of type $(\ol{\imath\imath})$ when $\alpha\vee\beta\neq\varnothing$.
Now if $\at(\alpha)=\{a_{1}<\dots<a_{k}\}$ and $\at(\beta)=\{b_1<\dots<b_l\}$, then
$\iota(\tilde{e}_{a_1}\cdots\tilde{e}_{a_k}\tilde{e}_{b_1}\cdots\tilde{e}_{b_l}) 
= \bar{e}_{\alpha}\bar{e}_\beta$  modulo $\bar{J}_m$. Moreover, if $\alpha\notin\I$, then $\sum_a\sgn(a,\alpha\setminus a)\bar{e}_{\alpha\setminus a}\in\iota(\tilde{J})+\bar{J}_m$, and 
$\bar{e}_\alpha\in \iota(\tilde{J})+\bar{J}_m$
since
 $\tilde{e}_{a_1}\cdots\tilde{e}_{a_k} = \tilde{e}_{a_1}\left( \sum_{j=1}^k(-1)^{j-1} \tilde{e}_{a_1}\cdots\hat{\tilde{e}}_{a_j}\cdots\tilde{e}_{a_k} \right)\in\tilde{J}$. 
\end{proof}

\begin{example}
Consider the example in \Cref{fig:P1}, which is a geometric semilattice.
In the presentation of \Cref{defn:OS}, there are 9 generators and 46 relations. 
There is, however, a more efficient presentation with 4 generators and 3 relations: the generators $\tilde{e}_i$ correspond to the atoms; two relations arise from $1\vee4=\varnothing$ and $2\vee3\vee4=\varnothing$; one relation $\tilde{e}_2\tilde{e}_3-\tilde{e}_1\tilde{e}_3+\tilde{e}_1\tilde{e}_2$ arises from the circuit. 

In fact, for a geometric semilattice,
the circuits are enough to generate the relations $(\widetilde{\imath\imath\imath})$. This is not, in general, true in our context.
For example, the ideal in \Cref{ex:OS} requires consideration for the unicircular elements. Additionally, the algebra in \Cref{ex:OS} cannot be generated in degree 1, unlike the geometric semilattice case.
\end{example}

\subsection{A Gr\"obner basis for the Orlik--Solomon ideal}\label{sec:gb}
There is a well-known combinatorial basis for the  Orlik--Solomon algebra of a geometric lattice, called the $\nbc$ basis, corresponding to a Gr\"obner basis in the Orlik--Solomon ideal (\cite[\S2.5]{yuz}). 
Our goal is to extend this to locally geometric posets, first requiring some generalities on the theory of Gr\"obner bases in graded-commutative algebras.

Let $F$ be a free graded-commutative $\Q$-algebra with generators $f_1,\dots,f_\ell$. As a vector space, $F$ has a basis $\B(F)$ given by monomials
\[ f^{\ol{m}} := f_1^{m_1}\cdots f_\ell^{m_\ell} \qquad 
\text{where } (m_1,\dots,m_\ell)\in\Z_{\geq0}^\ell \text{ and } m_i<2 \text{ whenever } \deg(f_i) \text{ is odd}. \] 
The degree of the monomial $f^{\ol{m}}$ is $\deg(f^{\ol{m}})=\sum_{i=1}^\ell m_i\deg(f_i)$.
Equip the monomial basis with a graded-lexicographic ordering, meaning that $f^{\ol{m}}\mo f^{\ol{n}}$ if either
\begin{itemize}
\item $\deg(f^{\ol m})<\deg(f^{\ol n})$, or 
\item $\deg(f^{\ol m})=\deg(f^{\ol n})$ and the leftmost nonzero entry of $\ol{n}-\ol{m}$ is positive. 
\end{itemize}
For a nonzero polynomial $g=\sum c(\ol{m})f^{\ol m}$, its \textbf{leading monomial} $\lm(g)$ is the largest monomial $f^{\ol m}$ in the support of $g$, and its \textbf{leading term} is $\lt(g):=c(\ol{m}) f^{\ol m}$ where $\lm(g)=f^{\ol m}$. 

\begin{defn}
A finite set $G$ of polynomials generating an ideal $J\subseteq F$ is a \textbf{Gr\"obner basis} of $J$ if
\[  \langle \lm(g) \st g\in G\rangle = \lm(J),
\]
where $\lm(J) := \langle \lm(g) \st g\in J\setminus \{0\} \rangle$.
\end{defn}

Following the perspective of \cite[Chapter II.4]{green}, we will study Gr\"obner bases in a free graded-commutative algebra by lifting to a $\Theta$-algebra $\hat{F}$. From our free graded-commutative $\Q$-algebra $F$, let $\hat{F}$ be the graded associative $\Q$-algebra generated by $\hat{f}_1,\dots,\hat{f}_\ell$ with $d_i=\deg(\hat{f}_i)=\deg(f_i)$, modulo the relations
\[ \hat{f}_i\hat{f}_j-(-1)^{d_id_j} \hat{f}_j\hat{f}_i 
\qquad \text{ for } i\neq j.\]
Let $q\colon \hat{F}\to F$ be the quotient map given by $\hat{f}_i\mapsto f_i$, so that $F\cong \hat{F}/\ker(q)$ where $\ker(q)$ is generated as an ideal by $\hat{f}_i^2$ for $\deg(f_i)$ odd. As a vector space, $\hat{F}$ has basis $\B(\hat{F})$ given by monomials
\[  \hat{f}^{\ol m} := \hat{f}_1^{m_1}\cdots \hat{f}_\ell^{m_\ell} \qquad \text{where } (m_1,\dots,m_\ell)\in\Z_{\geq0}^\ell \]
and $q$ has a section $s\colon F\to\hat{F}$ given by $s(f^{\ol m})=\hat{f}^{\ol m}$. Abbreviate $\hat{g}:=s(g)\in\hat{F}$ for the \emph{standard lift} of an element $g\in F$.
Equip $\B(\hat{F})$ with the analogous $\mo$ monomial ordering to define leading monomials and leading terms of polynomials in $\hat{F}$, and note that $q(\lm(\hat{g}))={\lm(g)}$ for $g\in F$.

Fix a finite set $\hat{G}\subseteq\hat{F}$. For any two polynomials $p_1,p_2\in\hat{G}$, define the \emph{S-polynomial} $\sigma(p_1,p_2)$ by
\[\sigma(p_1,p_2):= p_1\tau_1-p_2\tau_2 \]
where $\tau_1$ and $\tau_2$ satisfy
$\lt(p_1)\tau_1 = \lcm(\lm(p_1),\lm(p_2))=\lt(p_2)\tau_2$.
Say that $\sigma=\sigma(p_1,p_2)$ is \emph{weakly reducible}, written $\sigma\to0$, if either $\sigma=0$ or $\sigma$ is a linear combination of elements $p\in\hat{G}$ satisfying $\lm(p)\leq_{\mathrm{grlex}}\lm(\sigma)$.
We now state the main technique to be used in establishing our Gr\"obner basis.

\begin{prop}\label{prop:liftgb}
Let $F$ be a free graded-commutative algebra with generators $f_1,\dots,f_\ell$ and $\mo$ order on the monomial basis $\B(F)$.
Let $J\subseteq F$ be an ideal generated by a finite set $G$, and set 
\[\hat{G}:=\{ \hat{g}\st g\in G \} \cup \{ \hat{f}_i^2 \st \deg(f_i) \text{ is odd} \} \subseteq\hat{F}. \]
If the S-polynomial $\sigma$ is weakly reducible for all pairs in $\hat{G}$ whose leading monomials are not relatively prime, then $G$ is a Gr\"obner basis of $J$.
\end{prop}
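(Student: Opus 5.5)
The plan is to work entirely in the lift $\hat{F}$ and push the conclusion down along $q$. The algebra $\hat{F}$ is a commutative-up-to-sign algebra whose monomials $\hat{f}^{\ol m}$ behave exactly like commutative monomials. Every product of monomials is $\pm$ a monomial, and divisibility and $\lcm$ are computed exponentwise. So the standard Buchberger criterion holds in $\hat{F}$, as in \cite[Chapter II.4]{green}. Concretely, let $\hat{J}\subseteq\hat{F}$ be the ideal generated by $\hat{G}$. If every S-polynomial of a pair in $\hat{G}$ with non-coprime leading monomials is weakly reducible, then $\hat{G}$ is a Gröbner basis of $\hat{J}$. Pairs with relatively prime leading monomials are handled by the usual argument: $\sigma(p_1,p_2)$ reduces to zero using $p_1,p_2$ themselves, with signs only from the graded commutation.

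The proof of this criterion in $\hat{F}$ is the standard one, and I would sketch it rather than belabor it. Take $h=\sum_i c_i\mu_i p_i\in\hat{J}$ with $\mu_i$ monomials and $p_i\in\hat{G}$. Choose the representation minimizing $\mu:=\max_i\lm(\mu_ip_i)$. If $\lm(h)<\mu$, the top terms cancel, so their sum is a combination of shifted S-polynomials $\nu\,\sigma(p_i,p_j)$ with $\nu\sigma$ having leading monomial below $\mu$. Weak reducibility then rewrites each of these with strictly smaller maximum, contradicting minimality. Hence $\lm(h)=\lm(\mu_ip_i)$ for some $i$, so $\lm(h)$ is divisible by some $\lm(p)$ with $p\in\hat{G}$.

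The main step is the descent to $F$, which I expect to be the main obstacle because of the sign and standard-lift bookkeeping. First, $q(\hat{J})=J$ and $\ker q\subseteq\hat{J}$, since $\hat{f}_i^2\in\hat{G}$ for odd $f_i$. Now let $g\in J$ be nonzero. Its standard lift satisfies $\hat{g}\in q^{-1}(J)=\hat{J}$, and $q(\lm(\hat{g}))=\lm(g)$. By the previous step, $\lm(\hat{g})=\hat\mu\cdot\lm(p)$, up to sign, for some $p\in\hat{G}$ and monomial $\hat\mu$. The monomial $\lm(\hat{g})$ is a standard monomial with odd exponents less than $2$, so it cannot be divisible by any $\hat{f}_i^2$ with $f_i$ odd. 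Therefore $p=\hat{g}'$ for some $g'\in G$. Applying $q$ gives $\lm(g)=\pm q(\hat\mu)\lm(g')$, and $q(\hat\mu)$ is a nonzero monomial because it divides a squarefree-in-odd-variables monomial. Hence $\lm(J)\subseteq\langle\lm(g')\st g'\in G\rangle$. The reverse inclusion is trivial, so $G$ is a Gröbner basis of $J$.

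The care needed is in checking two things. First, $\lm(\hat g)$ really is the lift of $\lm(g)$; this holds because $s$ is order-preserving on monomials. Second, the lifts $\hat g$ of the generators, rather than arbitrary preimages, are what appear in $\hat{G}$. This matters because we only need leading monomials of $\hat{g}'$, and $\lm(\hat g')=s(\lm(g'))$.
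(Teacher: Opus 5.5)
Your proposal is correct and takes the same route as the paper. You lift to the $\Theta$-algebra $\hat{F}$, invoke Green's Buchberger criterion to conclude that $\hat{G}$ is a Gr\"obner basis of $\langle\hat{G}\rangle=q^{-1}(J)$, and push leading monomials down along $q$ using $q(\lm(\hat{g}))=\lm(g)$; your elementwise descent (ruling out $\hat{f}_i^2$ as a divisor because $\lm(\hat{g})$ is a standard monomial) is just a more explicit version of the paper's chain $\lm(J)=q(\lm(q^{-1}(J)))=\langle q(\lm(p))\st p\in\hat{G}\rangle=\langle\lm(g)\st g\in G\rangle$, in which the $\hat{f}_i^2$ simply map to $0$.
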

\begin{proof}
If the S-polynomial $\sigma$ is weakly reducible for all pairs in $\hat{G}$ whose leading monomials are not relatively prime, then $\hat{G}$ is a Gr\"obner basis of the ideal $\langle \hat{G}\rangle=q^{-1}(J)\subseteq\hat{F}$ by \cite[Thm 4.31]{green}. Then the following equalities hold
\begin{align*}
\lm(J) 
&= q(\lm(q^{-1}(J))) 
&& \text{since $q(\lm(\hat{f}))=\lm(f)$ for any $f\in J\setminus0$,}\\
&= q(\langle \lm(p) \colon p\in\hat{G}\rangle) 
&& \text{since $\hat{G}$ is a Gr\"obner basis}, \\
&= \langle q(\lm(p)) \colon p\in\hat{G} \rangle 
&& \text{since $q$ is surjective,}\\
&= \langle \lm(g) \colon g\in G \rangle
&& \text{since $q(\lm(\hat{g}))=\lm(g)$.}
\end{align*}
\end{proof}

The goal is now to apply this theory to the Orlik--Solomon ideal in the free graded-commutative algebra $E=E(\P)$. To make sense of a graded-lexicographic ordering $\mo$ on the monomials of $E$, we must first linearly order the generators of $E$. 
To this end, fix a linear order $\ato$ on the atoms of $\I$, and let $\leq_*$ be a linear extension of $\I$ in which
$\alpha\leq_*\beta$ if either
\begin{itemize}
    \item $|\alpha|<|\beta|$, or
    \item $|\alpha|=|\beta|$ and the minimal atom in the symmetric difference $\at(\alpha)\triangle\at(\beta)$ is in $\at(\beta)$.
\end{itemize}
Then we can list the elements of $\I\setminus\{\zero\}$ in a chain $\alpha_1 <_*\cdots <_* \alpha_\ell$ and order monomials $e_{\alpha_1}^{m_1}\cdots e_{\alpha_\ell}^{m_\ell}$ in $E(\P)$ by $\mo$ accordingly, noting that if $|\alpha|=|\beta|$ and 
$\alpha <_*\beta$ then $e_{\beta}{\mo} e_\alpha$.  

\begin{example}\label{ex:grobner}
Consider the matroid prescheme shown in \Cref{fig:ex2} and \Cref{ex:OS}. One admissible choice of linear ordering on $\I\setminus\{\zero\}$ is
\[1 \io 2 \io 3 \io 4\]
\[(c,12)\io(c,13)\io(x,14)\io(c,23)\io(y,24)\io(z,34)\]
\[(u_1,124)\io(u_2,124)\io(u_1,134)\io(u_2,134)\io(u_1,234)\io(u_2,234)\]
where rank-1 elements necessarily precede rank-2 elements, which precede rank-3 elements.

Then the leading term in \eqref{eq:ex2:ii} is $e_4e_{(c,23)}$.
In \eqref{eq:ex2:iii}, the leading terms are $e_{(u_1,234)}$ and $e_{(u_2,234)}$, and in \eqref{eq:ex2:iiic}, the leading term is $e_{(c,23)}$. 

In this case, multiplying \eqref{eq:ex2:iiic} by $e_4$ and subtracting from \eqref{eq:ex2:ii} will cancel leading terms, which results in the S-polynomial
\begin{equation}\label{eq:spoly}
    -e_{(u_1,234)}-e_{(u_2,234)}+e_4e_{(c,13)}-e_4e_{(c,12)}.
\end{equation}
The latter two terms are the leading terms of their own respective type \eqref{OSii} generators, and the sum of these two generators subtracted by the two generators in \eqref{eq:ex2:iii} results in \eqref{eq:spoly}. 
This demonstrates the sort of reduction to be used in the proof of \Cref{thm:gb}.
\end{example}
Before delving into the main theorem of this section, we record here a quick lemma about signs of permutations that will be used.

\begin{lemma}\label{lem:signs}
Let $X$ be a totally ordered set, and let $A$, $B$, and $C$ be pairwise disjoint subsets of $X$. Then 
\begin{enumerate}
\item \label{eq:comm}
$\sgn(A,B) = (-1)^{|A|\cdot|B|}\sgn(B,A)$, and
\item \label{eq:trans}
$\sgn(A,B)\sgn(B,C) = \sgn(A\cup B,C)\sgn(A,B\cup C)$.
\end{enumerate}
\end{lemma}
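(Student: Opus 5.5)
Both identities follow from an inversion count, so the plan is to make that precise first. For disjoint subsets $A,B$ of $X$, the sign of the permutation sorting the concatenated list $(A,B)$, each block listed in increasing order, is $(-1)^{\mathrm{inv}(A,B)}$. Here
\[\mathrm{inv}(A,B)=\#\{(a,b)\in A\times B \st a>b\}.\]
This holds because each block is already sorted internally, so the only inversions in the concatenation are between an element of $A$ and an element of $B$. I will state this as the basic observation and reduce everything to counting such pairs.

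For part (1), every pair $(a,b)\in A\times B$ satisfies exactly one of $a>b$ or $b>a$, since the sets are disjoint. Hence
\[\mathrm{inv}(A,B)+\mathrm{inv}(B,A)=|A|\cdot|B|.\]
It follows that $\sgn(A,B)\sgn(B,A)=(-1)^{|A||B|}$. Since signs are $\pm1$, this is exactly the claimed identity.

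For part (2), I will use the additivity $\mathrm{inv}(A\cup B,C)=\mathrm{inv}(A,C)+\mathrm{inv}(B,C)$, which holds because pairs in $(A\cup B)\times C$ split according to whether the first coordinate lies in $A$ or in $B$. Likewise $\mathrm{inv}(A,B\cup C)=\mathrm{inv}(A,B)+\mathrm{inv}(A,C)$. Adding these, the right-hand side has exponent
\[\mathrm{inv}(A,B)+\mathrm{inv}(B,C)+2\,\mathrm{inv}(A,C).\]
This is congruent modulo $2$ to $\mathrm{inv}(A,B)+\mathrm{inv}(B,C)$, which is the exponent on the left-hand side. That proves (2).

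I expect no step to be a real obstacle. The only care needed is to justify the basic observation, namely that the sign of the sorting permutation equals $(-1)$ raised to the number of cross inversions. This holds because the sign of a permutation is $(-1)$ to its number of inversions, and the within-block orderings contribute none.
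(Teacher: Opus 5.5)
Your proof is correct and follows the same route as the paper: write $\sgn(A,B)=(-1)^{\inv(A,B)}$, get (1) from the complementary count of pairs in $A\times B$, and get (2) from additivity of $\inv$ in each argument. You correctly use the sum $\inv(A,B)+\inv(B,A)=|A|\cdot|B|$; the paper's text writes a product $\inv(A,B)\inv(B,A)$ at that point, which is a typo.
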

\begin{proof}
Recall that $\sgn(A,B) = (-1)^{\inv(A,B)}$ where $\inv(A,B)$ is the number of pairs $(a,b)\in A\times B$ with $a>b$.
Equation \eqref{eq:comm} follows from the observation that $|A|\cdot|B|=\inv(A,B)\inv(B,A)$.

Now
$\sgn(A,B\cup C) = \sgn(A,B)\sgn(A,C)$
 because $\inv(A,B\cup C) = \inv(A,B)+\inv(A,C)$, and similarly 
 $\sgn(A\cup B, C) = \sgn(A,C)\sgn(B,C)$. Combining these two equations proves \eqref{eq:trans}.
\end{proof}

\begin{theorem}[Gr\"obner basis for OS ideal]\label{thm:gb}
Let $\P$ be a locally geometric poset, with associated matroid prescheme $M(\P)=(S,I)$. Let $G\subseteq E(\P)$ denote the set: 
\begin{enumerate}[label=(\roman*)]
    \item $e_\alpha e_\beta$ when $\alpha\meet\beta\neq\{\hat{0}\}$, except when $\alpha=\beta$ and $|\alpha|$ is odd;
    \item $e_\alpha e_\beta-\sgn(\alpha,\beta)\sum\limits_{\gamma\in(\alpha\join\beta)\cap\I}e_\gamma$ when $\alpha\meet\beta=\{\hat{0}\}$; and
    \item $\sum\limits_{a\in\at(\zeta)}\sgn(a,\omega\setminus a)e_{\omega\setminus a}$ when $\omega$ is a $\zeta$-unicircular element.
    \end{enumerate}
Then $G$ is a Gr\"obner basis for the Orlik--Solomon ideal $J(\P)\subseteq E(\P)$.
\end{theorem}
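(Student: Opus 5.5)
First I check that $G$ generates $J$. Generators of types (ii) and (iii) are identical to those in \Cref{defn:OS}. Type (i) drops only $e_\alpha^2$ with $|\alpha|$ odd, and that element is already zero in $E$ by graded-commutativity. Then I apply \Cref{prop:liftgb}: lift $G$ to $\hat G\subseteq\hat F$ by adding the $\hat e_\alpha^2$ for $|\alpha|$ odd, and show that every S-polynomial of a pair with non-coprime leading monomials is weakly reducible.

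\textbf{Leading monomials.} With the order $\leq_*$ and $\mo$:
\begin{itemize}
\item[(i)] the leading monomial of a type (i) element is $e_\alpha e_\beta$;
\item[(ii)] for type (ii) it is $e_\alpha e_\beta$, since the $e_\gamma$ are single generators of the same degree, and degree-equal monomials with more factors of lower degree come first under the lex rule (I would verify this against the convention that $e_\beta\mo e_\alpha$ when $\alpha<_*\beta$);
\item[(iii)] for type (iii) it is $e_{\omega\setminus a_0}$ with $a_0=\min_{\ato}\at(\zeta)$, because removing the smallest atom gives the $\leq_*$-largest element.
\end{itemize}
So the leading monomials are quadratic monomials $e_\alpha e_\beta$, the squares $\hat e_\alpha^2$, and single generators $e_{\omega\setminus a_0}$, which are the non-$\nbc$ elements by \Cref{lem:nbc-uni}.

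\textbf{Case analysis of overlapping pairs.}
\begin{itemize}
\item \emph{(iii)--(iii).} Two unicircular elements whose leading generators coincide, $\omega_1\setminus a_1=\omega_2\setminus a_2$. If $\omega_1=\omega_2$ the S-polynomial is $0$. Otherwise the S-polynomial is a combination of $e_{\omega_i\setminus a}$. I would reduce it using \Cref{lem:iii'-iii'}: for each $\upsilon\in\omega_1\join\omega_2$, the elements $\upsilon\setminus a$ are $\zeta^\upsilon_a$-unicircular. The type (iii) relations of the $\upsilon\setminus a$, each pushed down one more atom and combined with signs from \Cref{lem:signs}, should reproduce the difference. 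Part (3) of that lemma gives the symmetric pairing of terms needed for cancellation.
\item \emph{(i)/(ii) versus (iii).} These pairs share a factor $e_{\omega\setminus a_0}$ with some $e_\beta$.
\begin{itemize}
\item If $(\omega\setminus a_0)\meet\beta\ne\{\zero\}$, I would split terms according to whether $(\omega\setminus a)\meet\beta$ is trivial. \Cref{lem:i'-iii'} shows that the joins contain no independent elements, so the products reduce by type (i) and (ii) to zero.
\item If the meet is trivial and $\omega\meet\beta=\{\zero\}$, the S-polynomial is $\sum_{a}\pm e_\beta e_{\omega\setminus a}$ minus $e_\gamma$ terms. \Cref{lem:ii'-iii'} identifies $((\omega\setminus a_0)\join\beta)\cap\I$ with $\{\upsilon\setminus a_0\}$ for $\zeta$-unicircular $\upsilon\in\omega\join\beta$. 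Replacing each $e_\beta e_{\omega\setminus a}$ by its type (ii) relation then leaves $\sum_\upsilon\sum_a\pm e_{\upsilon\setminus a}$, which is a sum of type (iii) generators for the $\upsilon$. This is exactly the pattern in \Cref{ex:grobner}.
\item If the meet is trivial but $\omega\meet\beta\ne\{\zero\}$ (so $a_0\le\beta$), use \Cref{lem:i'-iii'}.
\end{itemize}
\item \emph{Quadratic versus quadratic.} Pairs overlap in a shared factor $e_\beta$, giving degree-3 S-polynomials $e_\alpha e_\beta e_\gamma$-type. I would split into cases by which pairwise meets are trivial and use \Cref{lem:triplejoin} (associativity of join) so that both reductions yield $\pm\sum_{\upsilon\in\bigvee\{\alpha,\beta,\gamma\}}e_\upsilon$ with matching signs by \Cref{lem:signs}(2). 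The signs for the graded-commutative swap come from \Cref{lem:signs}(1) together with the relation $\hat f_i\hat f_j=\pm\hat f_j\hat f_i$ in $\hat F$.
\end{itemize}

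\textbf{Squares.} The pairs involving $\hat e_\alpha^2$ with a quadratic generator containing $e_\alpha$ produce $\hat e_\alpha$ times that generator (or its partner), which reduces directly.

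\textbf{Main obstacle.} The hardest part is the weak-reducibility bookkeeping. Each reducer must have leading monomial at most $\lm(\sigma)$, and the signs must cancel exactly. This is worst in the (iii)--(iii) case and in the triple-join case, where the independent part of a triple join can differ from an iterated independent join. There I would insert \Cref{lem:u-join} to account for the non-independent intermediate joins.
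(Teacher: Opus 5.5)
Your plan is correct and follows the paper's proof. You lift to the $\Theta$-algebra via \Cref{prop:liftgb} and run the same pairwise S-polynomial case analysis, with \Cref{lem:i'-iii'}, \Cref{lem:ii'-iii'}, \Cref{lem:triplejoin} together with \Cref{lem:signs}, and \Cref{lem:iii'-iii'} playing exactly the roles you assign them; in the (iii)--(iii) case the paper fixes a single $\upsilon\in\omega_1\join\omega_2$ and uses that the signed sum of the type (iii) relations of all $\upsilon\setminus b$ vanishes identically.

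There are two small corrections, neither affecting validity:
\begin{enumerate}
\item Removing $a_0=\min_{\ato}\at(\zeta)$ gives the $\le_*$-\emph{smallest} element, not the largest. That is precisely why $e_{\omega\setminus a_0}$ is the $\mo$-largest monomial.
\item The triple-join case needs no appeal to \Cref{lem:u-join}. Anything above a dependent intermediate join is dependent by \eqref{I2}, so the independent part of $\bigvee\{\alpha,\beta_1,\beta_2\}$ is exactly the iterated independent join.
\end{enumerate}
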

\begin{proof}
To use \Cref{prop:liftgb}, we work in the $\Theta$-algebra $\hat{E}$ generated by $z_\alpha:=\hat{e}_\alpha$ for $\alpha\in\I$. Then $\hat{G}\subseteq\hat{E}$ contains the elements 
\begin{enumerate}
    \item[\mylabel{i'}{i'}] $z_\alpha z_\beta$ when $\alpha\meet\beta\neq\{\hat{0}\}$;
    \item[\mylabel{ii'}{ii'}] $z_\alpha z_\beta-\sgn(\alpha,\beta)\sum\limits_{\gamma\in(\alpha\join\beta)\cap\I}z_\gamma$ when $\alpha\meet\beta=\{\hat{0}\}$; and
    \item[\mylabel{iii'}{iii'}] $\sum\limits_{a\in\at(\zeta)}\sgn(a,\omega\setminus a)z_{\omega\setminus a}$ when $\omega$ is a $\zeta$-unicircular element,
    \end{enumerate}
noting that the elements $z_\alpha^2$ for $|\alpha|$ odd are included in \eqref{i'}.

The rest of the proof is spent reducing the S-polynomial $\sigma$ for all pairs in $\hat{G}$ whose leading monomials are not relatively prime. The computations are, as expected, quite technical.

\begin{description}
\item[Case (\ref{i'},\ref{i'})]  
    Any two such generators are monomials, thus $\sigma=0$. 
 
\item[Case (\ref{i'},\ref{ii'})] 
  Suppose that $\alpha_1\meet\beta_1\neq\{\zero\}$ and $\alpha_2\meet\beta_2=\{\zero\}$. Consider 
\[z_{\alpha_1}z_{\beta_1},\textup{ and}\quad z_{\alpha_2}z_{\beta_2}-\sgn(\alpha_2,\beta_2)\sum_{\gamma\in(\alpha_2\join\beta_2)\cap\I}z_\gamma.\]
  Without loss of generality, assume that $\alpha=\alpha_1=\alpha_2$. Then the S-polynomial is
  \[\sigma=(-1)^{|\beta_1|\cdot|\beta_2|}\sgn(\alpha,\beta_2)\sum_{\gamma\in(\alpha\join\beta_2)\cap\I}z_\gamma z_{\beta_1}.\]
  Since
  the assumption $\alpha\meet\beta_1\neq\{\zero\}$ implies
  $\gamma\meet\beta_1\neq\{\zero\}$ whenever $\gamma\in(\alpha\vee\beta_2)\cap\I$, each monomial in the support of $\sigma$ is of type \eqref{i'}. Thus $\sigma\to0$.

\item[Case (\ref{i'},\ref{iii'})]

  Suppose that $\alpha\meet\beta\neq\{\hat{0}\}$ and $\omega$ is a $\zeta$-unicircular element. Consider 
  \[z_{\alpha}z_{\beta},\textup{ and}\quad \sum\limits_{a\in\at(\zeta)}\sgn(a,\omega\setminus a)z_{\omega\setminus a}.\]
  Without loss of generality,  assume that $\alpha=\omega\setminus a_1$ where $a_1=\min_{\ato}\at(\zeta)$. Then 
  \[\sigma=-\sgn(a_1,\omega\setminus a_1)\sum_{a\in\at(\zeta)\setminus a_1}\sgn(a,\omega\setminus a)z_{\omega\setminus a}z_\beta.\]  
Consider $a\in\at(\zeta)\setminus a_1$. 
If $(\omega\setminus a)\meet\beta\neq\{\zero\}$, then the term $z_{\omega\setminus a}z_\beta$ is of type \eqref{i'}. 
Otherwise, if $(\omega\setminus a)\meet\beta=\zero$, then $((\omega\setminus a)\vee\beta)\cap\I=\varnothing$ by \Cref{lem:i'-iii'} and the term $z_{\omega\setminus a}z_\beta$ is of type \eqref{ii'}.
Since each monomial in the support of $\sigma$ is itself in $\hat{G}$, $\sigma\to0$.

\item[Case (\ref{ii'},\ref{ii'})]
Suppose, without loss of generality, that $\alpha\meet\beta_1=\{\zero\}=\alpha\meet\beta_2$. 
The S-polynomial of 
\[
z_{\alpha}z_{\beta_1}-\sgn(\alpha,\beta_1)\sum\limits_{\gamma_1\in(\alpha\join\beta_1)\cap\I}z_{\gamma_1}\textup{ and}\quad z_{\alpha}z_{\beta_2}-\sgn(\alpha,\beta_2)\sum\limits_{\gamma_2\in(\alpha\join\beta_2)\cap\I}z_{\gamma_2}
\]
is
  \[\sigma=-\sgn(\alpha,\beta_1)\sum_{\gamma_1\in(\alpha\join\beta_1)\cap\I}z_{\gamma_1}z_{\beta_2}+(-1)^{|\beta_1|\cdot|\beta_2|}\sgn(\alpha,\beta_2)\sum_{\gamma_2\in(\alpha\join\beta_2)\cap\I}z_{\gamma_2}z_{\beta_1}.\]
If $\beta_1\meet\beta_2\neq\{\zero\}$, then every monomial in the support of $\sigma$ is of type \eqref{i'}, thus $\sigma\to0$.
If $\beta_1\meet\beta_2=\{\zero\}$, then every monomial in the support of $\sigma$ is the leading monomial of a type \eqref{ii'} element. 
In this case, $\sigma$ reduces to
  \[\begin{split}
  \overline{\sigma} = & -\sgn(\alpha,\beta_1)\sgn(\alpha\join\beta_1,\beta_2)\sum_{\gamma_1\in(\alpha\join\beta_1)\cap\I}\sum_{\delta\in(\gamma_1\join\beta_2)\cap\I}z_\delta\\
  & +(-1)^{|\beta_1|\cdot|\beta_2|}\sgn(\alpha,\beta_2)\sgn(\beta_2\join\alpha,\beta_1)\sum_{\gamma_2\in(\alpha\join\beta_2)\cap\I}\sum_{\delta\in(\gamma_2\join\beta_1)\cap\I}z_\delta.
  \end{split}\] 
Now the summations can be reindexed using, for $\{i,j\}=\{1,2\}$,
  \[\bigcup_{\gamma_i\in(\alpha\join\beta_i)\cap\I}(\gamma_i\join\beta_j)\cap\I
  =\left( \bigcup_{\gamma_i\in\alpha\join\beta_i} \gamma_i\join\beta_j\right)\cap\I
  =\left(\bigvee\{\alpha,\beta_1,\beta_2\}\right)\cap\I\]
via \Cref{lem:triplejoin}, hence
  \[\begin{split}
  \overline{\sigma} = & -\sgn(\alpha,\beta_1)\sgn(\alpha\join\beta_1,\beta_2)\sum_{\bigvee\{\alpha,\beta_1,\beta_2\}\cap\I}z_\gamma\\
  & +(-1)^{|\beta_1|\cdot|\beta_2|}\sgn(\alpha,\beta_2)\sgn(\beta_2\join\alpha,\beta_1)\sum_{\gamma\in\bigvee\{\alpha,\beta_1,\beta_2\}\cap\I}z_\gamma.
  \end{split}\]
  But then $\ol{\sigma}=0$ because, combining  \Cref{lem:signs}\eqref{eq:comm} and \Cref{lem:signs}\eqref{eq:trans},
    \[\sgn(\alpha,\beta_1)\sgn(\alpha\join\beta_1,\beta_2)=(-1)^{|\beta_1|\cdot|\beta_2|}\sgn(\alpha,\beta_2)\sgn(\beta_2\join\alpha,\beta_1).\]

\item[Case (\ref{ii'},\ref{iii'})]
  Suppose that $\alpha\meet\beta=\{\hat{0}\}$, and let $\omega$ be a $\zeta$-unicircular element. Consider 
  \[z_{\alpha}z_{\beta}-\sgn(\alpha,\beta)\sum\limits_{\gamma\in(\alpha\join\beta)\cap\I}z_{\gamma},\textup{ and}\quad \sum\limits_{a\in\at(\zeta)}\sgn(a,\omega\setminus a)z_{\omega\setminus a}.\]
  Without loss of generality, assume  $\alpha=\omega\setminus a_1$ where $a_1=\min_{\ato}\at(\zeta)$. The S-polynomial is
  \[\sigma=-\sgn(\omega\setminus a_1,\beta)\sum_{\gamma\in((\omega\setminus a_1)\join\beta)\cap\I}z_\gamma-\sgn(a_1,\omega\setminus a_1)\sum_{a\in\at(\zeta)\setminus a_1}\sgn(a,\omega\setminus a)z_{\omega\setminus a}z_\beta.\]
If $a_1\leq \beta$, then the first sum vanishes by \Cref{lem:i'-iii'}, and every monomial in the second sum is of type \eqref{i'}, thus $\sigma\to0$.
  If $a_1\not\leq\beta$, then $\omega\meet\beta=\{\zero\}$. In this case, every monomial in the second sum is the leading monomial of a type \eqref{ii'} element, thus $\sigma$ reduces to
  
 \begin{align*}
 \overline{\sigma}&= -\sgn(a_1,\omega\setminus a_1)\sum_{a\in\at(\zeta)}\sgn(a,\omega\setminus a)\sgn(\omega\setminus a,\beta)\sum_{\gamma\in((\omega\setminus a)\join\beta)\cap\I}z_\gamma \\
 &= -\sgn(a_1,\omega\setminus a_1)\sum_{\substack{\upsilon\in\omega\join\beta\\ \zeta\text{-unicirc.}}}\sum_{a\in\at(\zeta)}\sgn(a,\omega\setminus a)\sgn(\omega\setminus a,\beta)z_{\upsilon\setminus a}
 && \text{by \Cref{lem:ii'-iii'}} \\
 &= -\sgn(a_1,\omega\setminus a_1)\sum_{\substack{\upsilon\in\omega\join\beta\\ \zeta\text{-unicirc.}}}\sum_{a\in\at(\zeta)}\sgn(a,(\omega\setminus a)\join\beta)\sgn(\omega,\beta)z_{\upsilon\setminus a}
 &&\text{by \Cref{lem:signs}\eqref{eq:trans}}\\
 &= -\sgn(a_1,\omega\setminus a_1)\sgn(\omega,\beta)\sum_{\substack{\upsilon\in\omega\join\beta\\ \zeta\text{-unicirc.}}}\left(\sum_{a\in\at(\zeta)}\sgn(a,\upsilon\setminus a)z_{\upsilon\setminus a}\right).
 \end{align*}
Then $\ol{\sigma}\to0$ because this is a sum of type \eqref{iii'} elements, and therefore $\sigma\to0$.

\item[Case (\ref{iii'},\ref{iii'})]
  Let $\omega_i$ be $\zeta_i$-unicircular 
  for $i=1,2$, with $\omega_1\neq\omega_2$, and consider
  \[\sum\limits_{a\in\at(\zeta_1)}\sgn(a,\omega_1\setminus a)z_{\omega_1\setminus a},\textup{ and}\quad \sum\limits_{a\in\at(\zeta_2)}\sgn(a,\omega_2\setminus a)z_{\omega_2\setminus a}.\]
  Without loss of generality,  assume  $\omega_1\setminus a_1=\omega_2\setminus a_2$ where $a_i=\min_{\ato}(\at(\zeta_i))$ and  $a_1\ato a_2$. 
  Then the S-polynomial  is 
  \[\begin{split}
  \sigma= & \sgn(a_1,\omega_1\setminus a_1)\sum_{a\in\at(\zeta_1)\setminus a_1}\sgn(a,\omega_1\setminus a)z_{\omega_1\setminus a}\\
   & -\sgn(a_2,\omega_2\setminus a_2)\sum_{a\in\at(\zeta_2)\setminus a_2}\sgn(a,\omega_2\setminus a)z_{\omega_2\setminus a}.
  \end{split}\] 
  Using \Cref{lem:iii'-iii'}, fix $\upsilon\in\omega_1\vee\omega_2$ so that, for each $a\in\at(\zeta_1)\cup\at(\zeta_2)$, $\upsilon\setminus a$ is $\zeta^\upsilon_a$-circular.
For $a,b\in\at(\zeta_1)\cup\at(\zeta_2)$, let 
\[s_{b,a} = \sgn(b,\upsilon\setminus b)\sgn(a,(\upsilon\setminus b)\setminus a),\]
noting $s_{b,a}=\sgn(a,b)\sgn(a\vee b,(\upsilon\setminus a)\setminus b) = -s_{a,b}$ by \Cref{lem:signs},
and then consider 
\[p = \sum_{b\in\at(\zeta_1)\cup\at(\zeta_2)} 
\ \sum_{a\in\at(\zeta^\upsilon_b)} s_{b,a} z_{(\upsilon\setminus b)\setminus a}. \]
By \Cref{lem:iii'-iii'}, the monomial $z_{(\upsilon\setminus b)\setminus a} = z_{(\upsilon\setminus a)\setminus b}$ appears in $p$ with coefficient $s_{b,a}+s_{a,b}=0$. Thus $p=0$.
Then, with an appropriate sign, the terms in $p$ indexed by $b=a_1$ and $b=a_2$ cancel with terms in $\sigma$, providing
\begin{align*}
\sigma = \sigma+s_{a_1,a_2}p
&= \sum_{a\in\at(\zeta_1)\setminus a_1} s_{a_2,a_1}s_{a_2,a}z_{\omega_1\setminus a} - \sum_{a\in\at(\zeta_2)\setminus a_2} s_{a_1,a_2}s_{a_1,a}z_{\omega_2\setminus a} \\
&\qquad \qquad + \sum_{b\in\at(\zeta_1)\cup\at(\zeta_2)} 
\ \sum_{a\in\at(\zeta^\upsilon_b)} s_{a_1,a_2}s_{b,a} z_{(\upsilon\setminus b)\setminus a}\\
&= \sum_{b\in\at(\zeta_1)\setminus a_1\cup\at(\zeta_2)\setminus a_2} s_{a_1,a_2}\sgn(b,\upsilon\setminus b) \sum_{a\in\at(\zeta^\upsilon_b)} \sgn(a,(\upsilon\setminus b)\setminus a) z_{(\upsilon\setminus b)\setminus a}.
\end{align*}
As each inner sum is a type \eqref{iii'} element, for  $\upsilon\setminus b$ unicircular and with leading monomial preceding that of $\sigma$,  $\sigma\to0$.

\end{description}
\end{proof}

\subsection{The no-broken-circuit basis}\label{sec:nbcbasis}
We now use our Gr\"obner basis to obtain a combinatorial $\nbc$ basis for the Orlik--Solomon algebra, as a generalization of that for geometric lattices due to \cite{JT}. 
The $\nbc$ basis is then used to prove several interesting properties of the Orlik--Solomon algebra. We start by establishing the applicable tool from Gr\"obner basis theory.

\begin{prop}\label{prop:vsbasis}
Let $F$ be a finitely-generated free graded-commutative algebra over $\Q$, equipped with the $\mo$ ordering on its monomial basis $\mathcal{B}(F)$, and let $I\subseteq F$ be an ideal. Then $F/I$ has a $\Q$-vector space basis represented by monomials in $\mathcal{B}(F)$ which are not contained in $\lm(I)$.
\end{prop}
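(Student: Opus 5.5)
We follow the standard argument from commutative Gröbner theory (Macaulay's theorem), adapted to the graded-commutative setting. Let $N\subseteq\mathcal{B}(F)$ be the set of monomials not lying in the monomial ideal $\lm(I)$. We must show two things: spanning and linear independence of the images of $N$ in $F/I$.

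\textbf{Linear independence.} Suppose $g=\sum_{m\in N}c_m m\in I$ with not all $c_m=0$. Then $g\neq0$ and $\lm(g)\in\lm(I)$ by definition of $\lm(I)$. But $\lm(g)$ is one of the monomials $m\in N$, contradiction. One subtlety: we need that the monomial ideal $\lm(I)$ contains a basis monomial only when that monomial lies in the ideal's $\Q$-span of basis monomials. Concretely, the ideal generated by monomials in $F$ is spanned by monomials. This holds because the product of two basis monomials in $F$ is either $0$ or $\pm$ a basis monomial (graded-commutativity only introduces signs, and odd squares vanish). Hence a monomial ideal is the $\Q$-span of the basis monomials it contains, so membership of a basis monomial is unambiguous.

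\textbf{Spanning.} We argue by induction on the well-order $\mo$ restricted to each degree. Since $F$ is finitely generated, each graded piece is finite-dimensional, so $\mo$ is a well-order on $\mathcal{B}(F)$ compatible with degree. Suppose some element of $F$ is not congruent modulo $I$ to an element of $\operatorname{span}N$. Choose such a $g$ whose leading monomial $m=\lm(g)$ is $\mo$-minimal; we may assume $g$ is homogeneous by splitting into graded components, since $I$ is homogeneous in our applications. If this assumption is unavailable, induct on the full $\mo$ order, which is still a well-order because the set of monomials below a given one is finite.

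If $m\in N$, then $g-\lt(g)$ has smaller leading monomial, hence is congruent to an element of $\operatorname{span}N$, and so is $g$. If $m\notin N$, then $m\in\lm(I)$; by the monomial-ideal remark above there is $h\in I\setminus0$ with $\lm(h)=m$. Indeed, $m=\pm u\,\lm(h')$ for some $h'\in I$ and monomial $u$, and $\lm(u h')=\pm u\,\lm(h')$. This last equality requires multiplicativity of the order: for monomials with $u\,m_1\neq0$ and $u\,m_2\neq0$, $m_1\mo m_2$ implies $um_1\mo um_2$. It holds for graded-lex on exponent vectors since multiplication adds exponent vectors. We must also check that lower terms of $h'$ do not overtake the leading term after multiplication; a lower term $u m'$ either vanishes or stays below. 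Then $g-c\,h$ for suitable $c\in\Q$ has strictly smaller leading monomial and is congruent to $g$, contradicting minimality.

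The main obstacle is exactly this compatibility of $\lm$ with multiplication in the presence of signs and vanishing odd squares. In particular, one must rule out the case where $u\,\lm(h')=0$ while $u$ times lower terms survives. We handle it by choosing the representation $m=\pm u\,\lm(h')$ with $u\lm(h')\neq0$ (which exists since $m\neq0$). Alternatively, we can lift to $\hat F$ as in \Cref{prop:liftgb}, where the usual commutative-style theory of \cite{green} applies, and push the basis statement down through $q$, using $q^{-1}(I)\supseteq\ker q$ and $q(\lm(q^{-1}(I)))=\lm(I)$.
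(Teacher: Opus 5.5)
Your proof is correct and is essentially the paper's argument. The paper picks, for each monomial $u\in\mathcal{B}(F)\cap\lm(I)$, some $g_u\in I$ with leading monomial $u$, shows these span $I$ by the same leading-monomial reduction you use and are independent by triangularity, and then concludes that $\{g_u\}$ together with the monomials outside $\lm(I)$ is a basis of $F$; this is your spanning-plus-independence argument in $F/I$, organized one step earlier. Your check that every monomial of $\lm(I)$ equals $\lm(h)$ for some $h\in I$ (using that grlex is multiplicative on nonzero products of basis monomials) supplies a detail the paper uses tacitly when choosing $g_u$, while your homogeneity detour and the lift to $\hat F$ are not needed.
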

\begin{proof}
First, let $\mathcal{B}(I)=\mathcal{B}(F)\cap\lm(I)$, which is a monomial basis for $\lm(I)$. 
For each $u\in\mathcal{B}(I)$, pick $g_u\in I$ such that $u=\lt(g_u)$. Let $G=\{g_u\st u\in\mathcal{B}(I)\}$.
We will first show that $G$ is a $\Q$-basis for $I$. 

To see that $G$ spans $I$, let $f\in I\setminus\{0\}$. 
Then $\lt(f)=a_uu$ where $u=\lm(f)\in\B(I)$ and $a_u\in\Q$.
Consider $f'=f-a_ug_u$, so that $f'\in I$ and $\lm(f')\mo\lm(f)$. Induction on the order of leading monomials completes the proof that $G$ spans $I$. For independence, suppose that $c_1g_{u_1}+\cdots +c_ng_{u_n}=0$ for some scalars $c_1,\dots,c_n$ and monomials $u_1,\dots,u_n\in\mathcal{B}(I)$ with $u_1\mo\cdots\mo u_n$.
When expanding the expression $c_1g_{u_1}+\cdots+c_ng_{u_n}$ in the monomial basis, the coefficient of $u_i$ is $c_1+\cdots+c_i$ for each $i$. By independence of monomials, this implies each $c_i=0$.

By a similar argument,  $G\cup(\mathcal{B}(F)\setminus\mathcal{B}(I))$ is a $\Q$-basis for $F$. Then the quotient map $F\to F/I$ restricts to a vector space isomorphism on the subspace of $F$ spanned by $\mathcal{B}(F)\setminus\mathcal{B}(I)$, that is, monomials which are not contained in $\lm(I)$. In this way, such monomials determine a basis for the quotient $F/I$.
\end{proof}

\begin{theorem}[$\nbc$ basis]\label{cor:vsbasis}
The Orlik--Solomon algebra $A=A(\P)$ of a locally geometric poset $\P$ has a vector space basis given by
        \[\B(A)=\{e_\alpha \colon \alpha\in \nbc(\P)\}.\]
\end{theorem}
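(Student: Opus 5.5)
By \Cref{prop:vsbasis}, $A(\P)=E/J$ has a basis given by the standard monomials, i.e.\ those monomials of $E$ not in $\lm(J)$. By \Cref{thm:gb}, $\lm(J)$ is generated by the leading monomials of the elements of $G$. So the plan is to compute $\lm(g)$ for each $g\in G$ and show that a monomial avoids all of them exactly when it is $e_\alpha$ with $\alpha\in\nbc(\P)$.

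\textbf{Leading monomials.} Type (i) elements are the monomials $e_\alpha e_\beta$ with $\alpha\meet\beta\neq\{\zero\}$ (the excluded case $\alpha=\beta$, $|\alpha|$ odd, gives $e_\alpha^2=0$ in $E$ anyway). For type (ii), the product $e_\alpha e_\beta$ has degree $|\alpha|+|\beta|$, which equals $|\gamma|$ for each $\gamma$ in the sum. The monomial order is grlex on the chain $\alpha_1<_*\cdots<_*\alpha_\ell$, whose leftmost entries are the small-rank generators. Comparing the exponent vectors, the leftmost difference occurs at $\min_*(\alpha,\beta)$, which appears in $e_\alpha e_\beta$ and not in $e_\gamma$ (since $|\alpha|,|\beta|<|\gamma|$ when both are nonzero). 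Hence $\lm = e_\alpha e_\beta$.

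For type (iii), all terms $e_{\omega\setminus a}$, $a\in\at(\zeta)$, have the same degree. The leading one is the $<_*$-smallest index. Since $\at(\omega\setminus a)\triangle\at(\omega\setminus b)=\{a,b\}$, and the minimal atom of the symmetric difference lies in the $<_*$-larger element, the $<_*$-smallest index is the one that omits the smallest atom. So $\lm = e_{\omega\setminus a_1}$ with $a_1=\min_{\ato}\at(\zeta)$, as in \Cref{ex:grobner}.

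\textbf{Standard monomials.} Every monomial of degree at least $2$ in the generators is a product $e_\alpha e_\beta\cdot m$ (including the case $e_\alpha^2$ with $|\alpha|$ even, which is covered by type (i) since $\alpha\meet\alpha=\alpha\ne\zero$). Whether $\alpha\meet\beta\neq\{\zero\}$ or not, $e_\alpha e_\beta$ is a leading monomial of type (i) or (ii), so every such monomial lies in $\lm(J)$. The remaining monomials are $1=e_{\zero}$ and single generators $e_\beta$ with $\beta\in\I\setminus\{\zero\}$. A single generator $e_\beta$ cannot be divisible by a quadratic leading monomial, so $e_\beta\in\lm(J)$ iff $\beta=\omega\setminus\min_{\ato}\at(\zeta)$ for some $\zeta$-unicircular $\omega$. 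By \Cref{lem:nbc-uni}, this happens iff $\beta\notin\nbc(\P)$. Together with $\zero\in\nbc$ (vacuously), the standard monomials are exactly $\{e_\alpha:\alpha\in\nbc(\P)\}$.

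\textbf{Main obstacle.} The only delicate step is the leading-term identification, specifically the sign conventions of $<_*$ versus $\mo$: the order is reversed within a degree, and grlex compares the leftmost coordinate. I would check this carefully against \Cref{ex:grobner}, where $e_{(c,23)}$ leads $e_{(c,23)}-e_{(c,13)}+e_{(c,12)}$. The rest is immediate from \Cref{thm:gb} and \Cref{prop:vsbasis}.
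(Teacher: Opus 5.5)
Your proposal is correct and is essentially the paper's proof. The paper likewise reads off $\lm(J)$ from the Gr\"obner basis of \Cref{thm:gb} as all quadratic monomials $e_\alpha e_\beta$ together with the $e_{\omega\setminus a}$ for $a=\min_{\ato}\at(\zeta)$, and concludes via \Cref{lem:nbc-uni} and \Cref{prop:vsbasis}; your explicit leading-term check just fills in details the paper leaves implicit. One small quibble: $\zero\in\nbc(\P)$ is not literally vacuous, but holds because $M(\P)$ is simple, so no circuit has rank one.
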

\begin{proof}
Using the Gr\"obner basis $G$ from \Cref{thm:gb},
\[ \lm(J) = \langle \lm(g)\st g\in G\rangle =  \langle e_{\alpha}e_{\beta} \st \alpha,\beta\in\I\rangle + \langle e_{\omega\setminus a} \st \omega \text{ is $\zeta$-unicircular and } a=\min\! _{\ato}\at(\zeta) \rangle. \]
Then the monomials not in $\lm(J)$ are precisely the monomials in $\B(A)$ by the characterization of $\nbc$ in \Cref{lem:nbc-uni}. The claim follows by \Cref{prop:vsbasis}.
\end{proof}

Note that the defining relations in \Cref{defn:OS} are homogeneous, so that $A(\P)$ is graded. Using the combinatorial basis, we can refine this grading to one indexed by the poset $\P$ and obtain a combinatorial formula for the Hilbert series, generalizing the geometric lattice case in \cite[Proposition 2.10, Theorem 2.6]{OS}.
It is worth pointing out that, although $A(\P)$ is a $\P$-graded algebra when $\P$ is a geometric lattice, our $\P$-grading  does not, in general, respect the multiplication in this way.
To make sense of the Hilbert series formula, recall the recursively-defined M\"obius function $\mu:\P\to\Z$ given by $\mu(\zero)=1$ and $\sum_{y\leq x}\mu(y)=0$ for any $x>\zero$, and that the \emph{characteristic polynomial} of $\P$ is defined as $\chi_\P(t) = \sum_{x\in\P} \mu(x)t^{\rk(\P)-\rk(x)}$.

\begin{theorem}[$\P$-decomposition of $A(\P)$]\label{cor:brieskorn}
Let $\P$ be a locally geometric poset with rank function $\rk$. Then the Orlik--Solomon algebra admits the following vector space decomposition
\[A(\P) 
\cong \bigoplus_{x\in\P} A^{\rk(x)}(\P_{\leq x}) \]
 with the degree-$j$ component of $A(\P)=\bigoplus_j A^j(\P)$ given by
\[A^j(\P)\cong \bigoplus_{\substack{x\in\P\\ \rk(x)=j}} A^{j}(\P_{\leq x})\]
The Hilbert series of $A(\P)$ is 
\[ H_\P(t):=\sum_{j\geq0} \dim(A^j)t^j = \sum_{x\in\P} \mu(x)(-t)^{\rk(x)} = (-t)^{\rk(\P)}\chi_\P\left(-\frac{1}{t}\right) \]
\end{theorem}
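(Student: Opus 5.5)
The plan is to use the $\nbc$ basis of \Cref{cor:vsbasis} and sort basis elements by their top element in $\P$. Every $\beta\in\nbc(\P)$ is independent (\Cref{lem:nbc-ind}), so $\beta=(x,T)$ with $x\in\P$ and $|T|=\rk(x)$. Set $\nbc_x(\P):=\{(x,T)\in\nbc(\P)\}$. Then $\nbc(\P)=\bigsqcup_{x\in\P}\nbc_x(\P)$, and each element of $\nbc_x(\P)$ has degree $\rk(x)$. Hence
\[
A(\P)=\bigoplus_{x\in\P}\operatorname{span}\{e_\beta\st \beta\in\nbc_x(\P)\}.
\]
It then suffices to identify $\nbc_x(\P)$ with the top-degree $\nbc$ elements of $\P_{\leq x}$.

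By \Cref{lem:localnbc}, $\beta\in\nbc(\P_{\leq x})$ if and only if $\beta\in\nbc(\P)$ and $\beta\leq(x,\at(x))$. An independent $\beta=(y,T)\leq(x,\at(x))$ has degree $\rk(x)$ exactly when $\rk(y)=\rk(x)$. Since $y\leq x$, this forces $y=x$. So the degree-$\rk(x)$ part of $\nbc(\P_{\leq x})$ is exactly $\nbc_x(\P)$. Applying \Cref{cor:vsbasis} to the locally geometric poset $\P_{\leq x}$, the space $A^{\rk(x)}(\P_{\leq x})$ has basis $\{e_\beta\st\beta\in\nbc_x(\P)\}$. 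This gives both displayed decompositions; the degree-$j$ version is obtained by collecting the $x$ with $\rk(x)=j$.

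For the Hilbert series it remains to show $\dim A^{\rk(x)}(\P_{\leq x})=|\nbc_x(\P)|=(-1)^{\rk(x)}\mu(x)$. Since $\P_{\leq x}$ is a geometric lattice, I would invoke the classical Orlik--Solomon / Jambu--Terao count: the number of top-rank $\nbc$ sets equals $|\mu(\zero,x)|$, and $\mu$ alternates in sign. To keep the argument self-contained, I would instead prove it by induction on $\rk(x)$. Applying \Cref{cor:vsbasis} to $\P_{\leq x}$, the total count $\sum_{y\leq x}|\nbc_y(\P)|$ is $\dim A(\P_{\leq x})$, so the identity reduces to $\sum_{y\leq x}(-1)^{\rk(y)}|\nbc_y|=0$ for $x>\zero$. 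This alternating sum vanishing is equivalent to Weisner-type acyclicity, which I would prove via deletion--contraction (\Cref{lem:nbc}) on the geometric lattice $\P_{\leq x}$, deleting the maximal atom. This is the main obstacle if not cited; in the geometric-lattice setting the classical result can simply be quoted. Finally,
\[
\sum_x\mu(x)(-t)^{\rk(x)}=(-t)^{\rk(\P)}\sum_x\mu(x)(-1/t)^{\rk(\P)-\rk(x)},
\]
which is the characteristic polynomial form.
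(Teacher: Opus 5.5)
Your argument is correct and is essentially the paper's proof. You group the $\nbc$ basis of \Cref{cor:vsbasis} by the top element $x$, use \Cref{lem:localnbc} (plus your rank argument forcing $y=x$) to identify each block with the top-degree $\nbc$ basis of $A(\P_{\leq x})$, and quote the classical count $\dim A^{\rk(x)}(\P_{\leq x})=(-1)^{\rk(x)}\mu(x)$, as the paper does via \cite[Theorem 2.6]{OS}. The optional self-contained step is unnecessary, but if you want it, there is a shorter route than deletion--contraction. The vanishing of $\sum_{y\leq x}(-1)^{\rk(y)}|\nbc_y(\P)|$ for $x>\zero$ follows at once from the sign-reversing involution on $\nbc(\P_{\leq x})$ that toggles the $\ato$-minimal atom of $\at(x)$. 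This also sidesteps a snag in your deletion--contraction plan: contractions are generally not simple, so \Cref{lem:nbc} would not apply directly in the induction.
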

\begin{proof}
For $x\in\P$, let $A_x\subseteq A(\P)$ be the subspace spanned by $e_{(x,T)}$ with $(x,T)\in \nbc(\P)$, or equivalently with $(x,T)\in\nbc(\P_{\leq x})$ by \Cref{lem:localnbc}. 
Then 
$A_x\cong A^{\rk(x)}(\P_{\leq x})$  
since $\P_{\leq x}$ is a geometric lattice (see \cite{JT,yuz}).
Writing the $\nbc$ basis of \Cref{cor:vsbasis} as a disjoint union
$ \bigsqcup \{e_{(x,T)} \st (x,T)\in \nbc(\P)\} $ indexed by $x\in\P$ yields the decomposition  $A=\bigoplus_{x\in\P}A_x$.
It is clear that $A_x$ is homogeneous of degree $\rk(x)$.
The Hilbert series formula follows from the decomposition because $\dim(A^{\rk(x)}(\P_{\leq x}))=(-1)^{\rk(x)}\mu(x)$ for any $x\in\P$ by \cite[Theorem 2.6]{OS}.
\end{proof}

Now given a locally geometric poset $\P$, consider the deletion and contraction of its associated matroid prescheme $M=M(\P)$ as defined in \Cref{sec:del-con}. Given the triple of matroid preschemes, $(M, M', M'')$, we denote the poset of flats of $M'$ as $\P'$ and of $M''$ as $\P''$. We now compare the corresponding Orlik--Solomon algebras, denoted as  
$A':=E(\P')/J(\P')$ and $A'':=E(\P'')/J(\P'')$. 
The following theorem uses the $\nbc$ basis to establish a relationship these three algebras.
In the case of geometric semilattices, \cite[Thm 3.8]{JT} obtained the same formula using the $\nbc$ basis.
Although we do not employ it here, this deletion and contraction formula can be used as a recursive tool.

\begin{theorem}\label{thm:del-con}
    Let $\P$ be a locally geometric poset with associated matroid prescheme $M=M(\P)$ and triple  $(M,M',M'')$. Then $A'$ is a subalgebra of $A$, and
    there exists a short exact sequence of vector spaces
    \[0\to A'\to A\to A''\to 0.\]
\end{theorem}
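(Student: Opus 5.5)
We take the distinguished atom to be $a=\max_{\ato}\at(S)$, as in \Cref{lem:nbc}, and identify the three objects concretely. The deletion $M'=(S_{\not\geq a},\I_{\not\geq a})$ has poset of flats $\P'$. The associated matroid prescheme of $\P'$ should be (isomorphic to) $M'$ itself, since it is simple as a restriction of a simple prescheme. Consequently $E(\P')$ is generated by $e_\alpha$ for $\alpha\in\I_{\not\geq a}\setminus\{\zero\}$. Similarly $E(\P'')$ is generated by $e_\beta$ for $\beta\in\I_{\geq a}\setminus\{a\}$, with degree $|\beta|-1$.

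\textbf{Step 1: an algebra map $A'\to A$.} The inclusion $\I_{\not\geq a}\subseteq\I$ induces $\iota\colon E(\P')\to E(\P)$. We check that it sends $J(\P')$ into $J(\P)$, generator by generator.

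For type (i), the meets agree because $S_{\not\geq a}$ is a down-set.

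For type (ii), take $\alpha,\beta\not\geq a$ with $\alpha\meet\beta=\{\zero\}$. Then $(\alpha\join\beta)\cap\I$ computed in $S$ equals the one computed in $S_{\not\geq a}$, since every element of $\alpha\join\beta$ has atom set $\at(\alpha)\cup\at(\beta)\not\ni a$. The signs agree as well.

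For type (iii), a $\zeta$-unicircular $\omega$ in $M'$ is $\zeta$-unicircular in $M$ by \Cref{prop:rank-uni}, since the rank functions agree.

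So $\iota$ descends to $\bar\iota\colon A'\to A$. By \Cref{cor:vsbasis}, $\bar\iota$ sends the $\nbc$ basis of $A'$ to $\{e_\alpha\st\alpha\in\nbc(M')\}$. By \Cref{lem:nbc}, this is a subset of the $\nbc$ basis of $A$. Hence $\bar\iota$ is injective and $A'$ is a subalgebra.

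\textbf{Step 2: the surjection $A\to A''$.} Define $\pi\colon E(\P)\to E(\P'')$ on generators by $\pi(e_\beta)=\sgn(a,\beta\setminus a)\,e_{\beta}$ when $\beta\geq a$ and $\beta\ne a$, and $\pi(e_\beta)=0$ otherwise. A simpler alternative is to define $\pi$ only as a linear map $A\to A''$ on $\nbc$ bases, by $e_\beta\mapsto e_\beta$ for $\beta\in\nbc(M'')$ and $e_\alpha\mapsto 0$ for $\alpha\in\nbc(M')$. Since the theorem only asks for an exact sequence of vector spaces, the basis-level definition suffices. Exactness then follows immediately from $\nbc(M)=\nbc(M')\sqcup\nbc(M'')$ (\Cref{lem:nbc}), the $\nbc$ bases of \Cref{cor:vsbasis} for $A$, $A'$ and $A''$, and the fact that $\bar\iota$ is exactly the inclusion of the span of $\nbc(M')$.

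\textbf{Step 3: the identifications, which are the main obstacle.} The point to verify carefully is that $M(\P')$ and $M(\P'')$ coincide with $M'$ and $M''$ under the natural identifications, including the induced atom orders. For $M''$, the atoms are the rank-2 elements covering $a$, ordered as in the example. This requires that $M'$ and $M''$ are simple matroid preschemes with posets of flats $\P'$ and $\P''$. By the uniqueness in \cite[Thm 11.6]{bibby}, the associated preschemes are then isomorphic. With that in hand, \Cref{lem:nbc} applies verbatim to give the basis bookkeeping.

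If simplicity of $M''$ fails (for instance, two atoms of $S_{\geq a}$ with the same flat), I would instead define $A''$ directly from the prescheme $M''$ with the same presentation as \Cref{defn:OS}. I would then note that the proofs of \Cref{thm:gb} and \Cref{cor:vsbasis} use only prescheme axioms, so the $\nbc$ basis still holds for $A''$.
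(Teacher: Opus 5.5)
Your Steps 1 and 2 match the paper's proof. The paper checks $J'\subseteq J$ on the three generator types exactly as you do, and then splits the $\nbc$ basis of $A$ using \Cref{lem:nbc}. Your treatment of $M'$ is also fine: the deletion of a simple prescheme is simple, so $M(\P')\cong M'$ with the induced atom order. Your first, generator-level $\pi$ would not descend to $A$: when $a\vee b=\{x\}$ it kills $e_ae_b$ but not $e_{(x,\{a,b\})}$, which relation (ii) equates with $e_ae_b$ up to sign. So you are right to use the basis-level map. The gap is in the $M''$ half of Step 3. This is a point the paper's own proof also passes over, with the phrase that the $\nbc$ sets ``may be identified'' with bases of $A'$ and $A''$.

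The simplicity you need genuinely fails for $M''$. Suppose $a$ lies below a rank-$2$ element $x$ of $\P$ together with two further atoms $b,c$. Then $(x,\{a,b,c\})$ is a circuit of $M''$ covering the atom $(x,\{a,b\})$ of $S_{\geq a}$, so that atom is not a flat of $M''$; contraction creates parallel elements. Hence the uniqueness in \cite[Thm 11.6]{bibby} does not identify $M(\P'')$ with $M''$, and your list of generators of $E(\P'')$ is wrong in general. Your fallback of building $A''$ directly from $M''$ proves the sequence for a different algebra than $A''=A(\P'')$.

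Since only an exact sequence of vector spaces is claimed, what is missing is exactly the count $|\nbc(M'')|=\dim A(\P'')$. One way to obtain it is as follows.
\begin{enumerate}
\item The flats of $M''$ are the $(y,\at(y))$ with $y\geq a$, so $\P''\cong\P_{\geq a}$.
\item Each $\beta=(y,T)\in\I_{\geq a}$ lies below exactly one flat of $M''$ of the same rank, namely $(y,\at(y))$.
\item The argument of \Cref{lem:localnbc} uses only flatness. It shows that $\beta$ is $\nbc$ in $M''$ if and only if $T\setminus a$ is an $\nbc$ basis of the contraction $M_y/a$, where $M_y$ is the simple matroid on $\at(y)$.
\item The matroid $M_y/a$ is loopless with lattice of flats $[a,y]$. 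By Whitney's broken-circuit theorem, which holds in the presence of parallel elements, it has $|\mu(a,y)|$ $\nbc$ bases.
\item Summing over $y\geq a$ and comparing with \Cref{cor:brieskorn} gives $|\nbc(M'')|=\dim A(\P'')$, which closes the argument.
\end{enumerate}
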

\begin{proof}
From \Cref{cor:vsbasis}, $A$ has vector space basis given by $\B(A)=\{e_\alpha\st\alpha\in\nbc(M)\}$. 
By \Cref{lem:nbc}, $\B(A)$ is a disjoint union of $\B'=\{e_\alpha\st \alpha\in\nbc(M')\}$ and $\B''=\{e_\alpha\st \alpha\in\nbc(M'')\}$.
As these may be identified with bases for $A'$ and $A''$, respectively, by \Cref{cor:vsbasis}, there is a vector space decomposition $A\cong A'\oplus A''$ hence short exact sequence as claimed.

To see that the injection $A'\to A$, induced by the inclusion $E'\subseteq E$, is an algebra map, we show $J'\subseteq J$.
Since $\I'=\I\cap S_{\not\geq a}$ is downward-closed, an element in $J'$ of type \eqref{OSi} must also be in $J$. 
A type \eqref{OSii} element of $J'$ must be in $J$ because $\alpha\vee_S\beta = \alpha\vee_{S\not\geq a}\beta$ for all $\alpha,\beta\in S_{\not\geq a}$.
Type \eqref{OSiii} elements of $J'$ are in $J$ because, if $\omega$ is $\zeta$-unicircular in $M'$, then $\rk(\omega)=\rk'(\omega)=|\omega|-1$ and hence $\omega$ is $\zeta$-unicircular in $M$ by \Cref{prop:rank-uni}.
\end{proof}

\subsection{The Orlik--Solomon sheaf}\label{sec:sheaf}

In this section, we describe how the Orlik--Solomon algebra defined in \Cref{defn:OS} naturally arises from the Orlik--Solomon algebra for geometric lattices. For any element $x$ in a locally geometric poset $\P$, the poset $\P_{\leq x}$ is a geometric lattice with its associated Orlik--Solomon algebra $A(\P_{\leq x})$.
Moreover, if $x\geq y$, then there is a projection $r^x_y\colon A(\P_{\leq x})\to A(P_{\leq y})$. 
Viewing $\P$ as a category whose objects are the elements of $\P$ and whose morphisms $x\to y$ correspond to order relations $x\geq y$, this data determines a functor $\F:\P\to\mathtt{Alg}_\Q$ to the category of $\Q$-algebras. Such a functor is called a \emph{sheaf} on the poset $\P$.
Endowing $\P$ with the Alexandroff topology, in which the intervals $\P_{\leq x}$ form a basis, this agrees with the usual notion of sheaf on a topological space.
In fact, the open sets are \emph{order ideals}, that is, downward-closed subsets of $\P$, and the restriction maps in the sheaf are given as follows.

\begin{prop}\label{prop:OSmaps}
The inclusion $i\colon\mathcal{Q}\into \P$ of an order ideal induces a surjective algebra homomorphism $i^*\colon A(\P)\to A(\mathcal{Q})$.
Moreover, $(i\circ j)^*=j^*\circ i^*$ whenever $j\colon\mathcal{Q}'\into \mathcal{Q}$ is the inclusion of an order ideal. 
\end{prop}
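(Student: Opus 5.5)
We construct $i^*$ at the level of the free algebras and then show it respects the Orlik--Solomon ideals. Write $M(\P)=(S,\I)$ and $M(\mathcal{Q})=(S_\mathcal{Q},\I_\mathcal{Q})$. Since $\mathcal{Q}$ is an order ideal of $\P$, it contains $\zero$, and for $x\in\mathcal{Q}$ we have $\mathcal{Q}_{\leq x}=\P_{\leq x}$. Hence $\mathcal{Q}$ is locally geometric with the same rank function and the same atoms below each of its elements. It follows that $S_\mathcal{Q}=\{(x,T)\in S\st x\in\mathcal{Q}\}$ is an order ideal of $S$, and that $\I_\mathcal{Q}=\I\cap S_\mathcal{Q}$. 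Define $\tilde\imath\colon E(\P)\to E(\mathcal{Q})$ on generators by $e_\alpha\mapsto e_\alpha$ if $\alpha\in\I_\mathcal{Q}$ and $e_\alpha\mapsto 0$ otherwise, and extend to an algebra map. This map preserves degrees and is surjective, since every generator of $E(\mathcal{Q})$ is hit. Functoriality $(i\circ j)^*=j^*\circ i^*$ is then immediate on generators, because both sides send $e_\alpha$ to $e_\alpha$ or $0$ according to whether $\alpha$ lies over $\mathcal{Q}'$. So the whole content is the claim $\tilde\imath(J(\P))\subseteq J(\mathcal{Q})$.

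We check the generator types of \Cref{defn:OS} one at a time. The key observation is that joins and meets computed in $\P$ of elements of $\mathcal{Q}$ agree with those computed in $\mathcal{Q}$, provided we intersect with $\mathcal{Q}$. More precisely, for $\alpha,\beta\in S_\mathcal{Q}$ we have $(\alpha\join_S\beta)\cap S_\mathcal{Q}=\alpha\join_{S_\mathcal{Q}}\beta$. Indeed, any common upper bound in $S_\mathcal{Q}$ lies above some minimal upper bound in $S$, and that minimal bound is in $S_\mathcal{Q}$ by downward closure. Meets in $S$ of elements of $S_\mathcal{Q}$ are automatically in $S_\mathcal{Q}$.

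\emph{Type (i).} A generator $e_\alpha e_\beta$ maps either to $0$ or to the same type (i) generator in $E(\mathcal{Q})$.

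\emph{Type (ii).} If both $\alpha,\beta\in\mathcal{Q}$, the image is exactly the type (ii) generator of $J(\mathcal{Q})$, because the terms with $\gamma\notin S_\mathcal{Q}$ are sent to $0$. If instead, say, $\beta\notin S_\mathcal{Q}$, then every $\gamma\geq\beta$ also lies outside $S_\mathcal{Q}$, since $\mathcal{Q}$ is downward closed. So the whole expression maps to $0$.

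\emph{Type (iii).} Let $\omega$ be $\zeta$-unicircular in $S$. If $\omega\in S_\mathcal{Q}$, then $\omega$ is $\zeta$-unicircular in $M(\mathcal{Q})$: by \Cref{prop:rank-uni} this is a rank condition, and ranks agree because $\rk_M(x,T)=\rk_\P(x)$. Also all $\omega\setminus a$ lie in $\I_\mathcal{Q}$, so the generator maps to the corresponding type (iii) generator of $J(\mathcal{Q})$.

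The main obstacle is the remaining subcase $\omega\notin S_\mathcal{Q}$, where some of the $\omega\setminus a$, $a\in\at(\zeta)$, may still lie in $S_\mathcal{Q}$. The image is then the partial sum over those $a$ with $\omega\setminus a\in S_\mathcal{Q}$, and we must show this partial sum lies in $J(\mathcal{Q})$. Write $\omega=(w,\at(\omega))$. Since $S_{\leq\omega}$ is Boolean and sits over the geometric lattice $\P_{\leq w}$, the elements $\omega\setminus a$ are of the form $(y_a,\at(\omega)\setminus a)$ with $y_a\leq w$. For $a\in\at(\zeta)$ we have $\rk(y_a)=|\omega|-1=\rk(w)$ by \Cref{lem:u-a-ind}, which forces $y_a=w$. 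Hence every $\omega\setminus a$ with $a\in\at(\zeta)$ lies over $w$ itself, so when $\omega\notin S_\mathcal{Q}$ none of them lie in $S_\mathcal{Q}$ and the image is $0$. If this rank argument needs more care (for instance to confirm that $\omega\setminus a$ is the element lying over $w$ within the lattice $S_{\leq\omega}$), the fallback is to use \eqref{M4'} together with the local lattice structure of $\P_{\leq w}$. With all three types checked, $\tilde\imath$ descends to the surjective homomorphism $i^*\colon A(\P)\to A(\mathcal{Q})$.
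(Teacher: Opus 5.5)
Your proof is correct and matches the paper's: define $i^*$ on generators by sending $e_\alpha$ to $e'_\alpha$ or $0$, then check relations (i)--(iii), the key point in (iii) being that for $a\in\at(\zeta)$ the element $\omega\setminus a$ lies over the same $x$ as $\omega$, so these generators vanish when $x\notin\mathcal{Q}$. Your rank argument forcing $y_a=w$ is already complete, so the fallback you mention is not needed.
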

\begin{proof}
As $\mathcal{Q}$ is an order ideal of $\P$,
we have that
$S(\mathcal{Q})=\{(x,T)\in S(\P)\st x\in \mathcal{Q}\}$ and similarly $\I(\mathcal{Q})=\{(x,T)\in \I(\P)\st x\in \mathcal{Q}\}$. 
Letting $e_\alpha$ denote the generators of $A(\P)$, where $\alpha\in\I(\P)$, and $e'_\beta$ denote the generators of $A(\mathcal{Q})$, where $\beta\in\I(\mathcal{Q})$, we claim that the following defines a surjective algebra homomorphism. 
\[
i^*(e_\alpha) =
\begin{cases}
e'_\alpha & \text{ if } \alpha\in\I(\mathcal{Q}) \\
0 & \text{ otherwise}
\end{cases}
\]
To see that $i^*$ respects relation \eqref{OSi}, note that if $\alpha,\beta\in\I(\mathcal{Q})$, then $\alpha\wedge_{S(\P)}\beta\neq\{\zero\}$ implies $\alpha\wedge_{S(\mathcal{Q})}\beta\neq\{\zero\}$.
To see that $i^*$ respects relation \eqref{OSii}, note that if $\alpha,\beta\in\I(\mathcal{Q})$, then $\alpha\vee_{S(\mathcal{Q})}\beta\subseteq\alpha\vee_{S(\P)}\beta$, and otherwise $(\alpha\vee_{S(\P)}\beta)\cap S(\mathcal{Q})=\varnothing$.
To see that $i^*$ respects relation \eqref{OSiii}, note that for a $\zeta$-unicircular element $\omega=(x,T)$ in $S(\P)$, if $x\in\mathcal{Q}$ then $\omega$ is $\zeta$-unicircular in $S(\mathcal{Q})$, and if $x\notin \mathcal{Q}$ then $\omega\setminus a=(x,T\setminus a)\notin S(\mathcal{Q})$ for any $a$.

This is compatible with compositions because any order ideal $\mathcal{Q}'$ of $\mathcal{Q}$ is also an order ideal of $\P$, with 
$S(\mathcal{Q}') =  \{(x,T)\in S(\mathcal{Q}) \st x\in\mathcal{Q}'\} = \{(x,T)\in S(\P)\st x\in\mathcal{Q}'\}$ and similarly for $\I(\mathcal{Q}')$.
\end{proof}

The following theorem shows how, for a locally geometric poset $\P$, the \emph{local} Orlik--Solomon algebras $A(\P_{\leq x})$ glue together to the \emph{global} Orlik--Solomon algebra $A(\P)$. In the case of geometric semilattices, this idea was first considered by Yuzvinsky \cite{yuz-sheaf} (see also \cite{BDF}).

\begin{theorem}[Local-to-global OS algebra]\label{thm:sheaf}
The sheaf $\F(\P)$ of Orlik--Solomon algebras on a locally geometric poset $\P$ is a flasque sheaf with global sections isomorphic to the algebra  $A(\P)$.
\end{theorem}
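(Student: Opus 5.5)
The sheaf $\F=\F(\P)$ assigns to each open set (order ideal) $\mathcal{Q}\subseteq\P$ the algebra $A(\mathcal{Q})$. The restriction maps are the homomorphisms $i^*$ of \Cref{prop:OSmaps}; on the basic opens $\P_{\leq x}$ this gives $A(\P_{\leq x})$ with the projections $r^x_y$. With this choice, global sections are tautologically $A(\P)$. The real content is therefore twofold: the assignment $\mathcal{Q}\mapsto A(\mathcal{Q})$ must actually be a sheaf (equivalently, agree with the sheaf determined by the functor $x\mapsto A(\P_{\leq x})$), and it must be flasque. Flasqueness is immediate from \Cref{prop:OSmaps}, since every restriction $A(\P)\to A(\mathcal{Q})$ is surjective, and functoriality is the compatibility statement $(i\circ j)^*=j^*\circ i^*$.

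For the sheaf property, I would use the standard description of sections of the sheaf associated to a functor on a poset with the Alexandroff topology:
\[\F(\mathcal{Q})\cong\varprojlim_{x\in\mathcal{Q}}A(\P_{\leq x}),\]
i.e.\ compatible families $(s_x)_{x\in\mathcal{Q}}$ with $r^x_y(s_x)=s_y$ for $y\le x$. So it suffices to prove that the natural map
\[\Phi\colon A(\mathcal{Q})\to\varprojlim_{x\in\mathcal{Q}}A(\P_{\leq x}),\]
with components given by restriction, is an isomorphism. Since $\mathcal{Q}$ is itself locally geometric, I may take $\mathcal{Q}=\P$. $\Phi$ is an algebra map by \Cref{prop:OSmaps}, so only bijectivity remains, and I would prove it with the $\nbc$ bases.

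By \Cref{cor:vsbasis} and \Cref{lem:localnbc}, the basis $\{e_\alpha:\alpha\in\nbc(\P)\}$ of $A(\P)$ restricts, on each $A(\P_{\leq x})$, to the basis $\{e_\alpha:\alpha\in\nbc(\P),\ \alpha\le(x,\at(x))\}$, and every other basis element maps to $0$. Here I must verify that $i^*(e_\alpha)=0$ exactly when $\alpha=(y,T)$ has $y\not\le x$. This holds because for $\alpha\le(x,\at(x))$ with $\alpha=(y,T)$ we get $y\le x$, and conversely $y\le x$ forces $(y,T)\le (x,\at(x))$ by the lattice property. Using the $\P$-decomposition of \Cref{cor:brieskorn}, write
\[A(\P_{\leq x})=\bigoplus_{y\le x}A_y,\]
where $A_y$ is spanned by the $\nbc$ elements $e_{(y,T)}$ and is independent of $x$. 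Then $r^x_y$ is projection onto $\bigoplus_{z\le y}A_z$. A compatible family $(s_x)$ is determined by its components $c_y\in A_y$: the $A_y$-component of $s_x$ equals that of $s_y$ by compatibility, so we may set $c_y$ to be this common value. Conversely, any choice of $(c_y)_{y\in\P}$ gives a compatible family. Hence
\[\varprojlim A(\P_{\le x})\cong\bigoplus_{y\in\P}A_y\cong A(\P)\]
as vector spaces, and $\Phi$ sends $\sum_y c_y$ to this family. This proves bijectivity.

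The main obstacle is the claim that restriction respects the decomposition, i.e.\ that the $\nbc$ basis element $e_\alpha$ of $A(\P)$ maps to the corresponding $\nbc$ basis element of $A(\P_{\le x})$ rather than to some rewritten combination. This is exactly what the explicit formula for $i^*$ on generators, together with \Cref{lem:localnbc}, provides: $\nbc$ elements of $\P_{\le x}$ are precisely the $\nbc$ elements of $\P$ lying below $(x,\at(x))$, so no reduction is needed. One should also check that the identification of the sheaf on opens with $A(\mathcal{Q})$ is compatible with $i^*$, which follows because $\Phi$ is natural in $\mathcal{Q}$.
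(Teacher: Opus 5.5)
Your proof is correct and follows the paper's argument: the paper identifies global sections with compatible families $(a_x)_{x\in\P}$, uses the $\P$-decomposition of \Cref{cor:brieskorn} (via $\pi^x_y=\pi^y_y\circ r^x_y$ and $r_x(a)=(\pi_y(a))_{y\le x}$) to show that $\phi(a)=(r_x(a))_x$ is bijective, and deduces flasqueness from the surjectivity in \Cref{prop:OSmaps}. Your one addition is to run the argument for every order ideal $\mathcal{Q}$, so that $\F(\mathcal{Q})\cong A(\mathcal{Q})$ naturally; this is a worthwhile refinement, because that identification is exactly what lets flasqueness follow from \Cref{prop:OSmaps} on arbitrary open sets and not just on the basic opens $\P_{\le x}$.
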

\begin{proof}
Let \[\Gamma=\left\{ (a_x)_{x\in\P} \in \prod A(\P_{\leq x}) \st r^x_y(a_x)=a_y \text{ whenever } y\leq x\right\}\] be the ring of global sections of $\F(\P)$. 
Using, for each $x\in \P$, the projection $r_x\colon A(\P)\to A(\P_{\leq x})$ from \Cref{prop:OSmaps} and the fact $r^x_y\circ r_x=r_y$ whenever $x\geq y$, we have an algebra map
\[ \phi\colon A(\P)\to\Gamma, \qquad \phi(a)= (r_x(a))_{x\in\P} \]

To show that $\phi$ is an isomorphism, we use the $\P$-decomposition $A(\P)\cong \bigoplus_{x\in\P} A^{\rk(x)}(\P_{\leq x})$ from \Cref{cor:brieskorn}. For $x\in\P$, let $\pi_x\colon A(\P)\to A^{\rk(x)}(\P_{\leq x})$ be the projection onto the component indexed by $x$, so that we may identify an element $a\in A(\P)$ with the tuple $(\pi_x(a))_{x\in\P}\in\bigoplus_{x\in\P}A^{\rk(x)}(\P_{\leq x})$.
Using the analogous decomposition of $A(\P_{\leq x})$ and projections $\pi^x_y\colon A(\P_{\leq x})\to A^{\rk(y)}(\P_{\leq y})$ for $y\leq x$,
we may similarly identify an element $a_x\in A(\P_{\leq x})$ with the tuple $(\pi^x_y(a_x))_{y\leq x}\in \bigoplus_{y\leq x} A^{\rk(y)}(\P_{\leq y})$. 
Observe that $\pi^x_y=\pi^y_y\circ r^x_y$ for $x\geq y$, and 
$r_x(a)=(\pi_y(a))_{y\leq x}$ for $a\in A(\P)$ and $x\in\P$. 

Now, for injectivity of $\phi$, suppose $\phi(a)=\phi(b)$. Then $(\pi_y(a))_{y\leq x}=r_x(a)=r_x(b)=(\pi_y(b))_{y\leq x}$ for all $x\in\P$, which implies $\pi_y(a)=\pi_y(b)$ for all $y\in\P$, hence $a=b$.

For surjectivity, suppose $(a_x)_{x\in\P}\in\Gamma$, and let $a=(\pi^x_x(a_x))_{x\in\P}\in A(\P)$. Then 
\[ r_x(a) = (\pi_y(a))_{y\leq x} = (\pi^y_y(a_y))_{y\leq x} = (\pi^y_y(r^x_y(a_x)))_{y\leq x} = (\pi^x_y(a_x))_{y\leq x} =a_x \]
for all $x\in\P$, implying $\phi(a)=(a_x)_{x\in\P}$.

Finally, the sheaf $\F(\P)$ is flasque because all restriction maps are surjective via \Cref{prop:OSmaps}.
\end{proof}

\section{Abelian arrangements and their cohomology}\label{sec:H^*}

\subsection{Abelian arrangements} \label{sec:arrangement}
This section reviews a topological manifestation of geometric posets, encoding the intersection data of an arrangement of certain subspaces in abelian Lie groups.

\begin{defn}\label{def:arrangement}
Let $\Lambda=\Z^r$ so that for a connected abelian Lie group $\G$, $\Hom(\Lambda,\G)\cong\G^r$, and an element $\chi\in\Lambda$ determines a group homomorphism $\chi\colon\G^r\to\G$. 

An \textbf{abelian arrangement} is a collection $\A=\A(\G)=\{H_1,\dots,H_n\}$ where, for each $i$, $H_i=\chi_i^{-1}(g_i)$ for some $g_i\in\G$ and primitive $\chi_i\in\Lambda$.

The \textbf{intersection poset} (or \emph{poset of layers}) $\P=\P(\A)$ of an abelian arrangement $\A=\A(\G)$ is the collection of all connected components of intersections $\cap_{i\in T}H_i$ where $T\subseteq[n]$, partially ordered by reverse inclusion. By convention, $\G^r$ is the unique minimal element of $\P(\A)$. 

Given $x\in\P(\A)$, define 
$\Lambda_x:= \Lambda \cap\langle \chi_i \st H_i\supseteq x \rangle_{\R}.$
\end{defn}

\begin{example}\label{ex:arrangement}
Let $\Lambda=\Z^2$ and consider the column vectors of the matrix 
\[\begin{blockarray}{ccc}
\chi_1 & \chi_2 & \chi_3 \\
\begin{block}{[ccc]}
  1 & 0 & 2 \\
  0 & 1 & -1 \\
\end{block}
\end{blockarray}.\]
We will consider various choices of $\G$ and the arrangement $\A(\G)=\{H_1,H_2,H_3\}$ that arises, where $H_i=\chi_i^{-1}(e)$ and $e$ is the identity element of $\G$. Notably, there is an isomorphism of posets $\P(\A(\G))\cong\P(\A(\G\times\R))$, and the following three cases are of most interest.
\begin{enumerate}
\item \label{ex:A:R} When $\G=\R$ or $\G=\C$, $\A(\G)$ is an arrangement of hyperplanes in the vector space $\G^2$. The $i$th column vector is normal to the hyperplane $H_i=\chi_i^{-1}(0)$, and all three hyperplanes meet at the origin. The arrangement is depicted in \Cref{fig:A:R}, and its intersection poset is depicted in \Cref{fig:P:R}. The poset $\P(\A)$ has the structure of a geometric lattice, as is the case for any \emph{linear arrangement} ($\G=\R^b$ for $b>0$) with $0\in H_i$ for each $i$.
\item \label{ex:A:T} When $\G=S^1$ or $\G=\C^\times$, the subspaces $H_i$ are hypertori in the torus $\G^2$, and $\A$ is called a \emph{toric arrangement}. 
Here, the intersection $H_2\cap H_3$ contains points $(t_1,t_2)\in\G^2$ where $t_2=1$ and $t_1^2t_2^{-1}=1$, equivalently $t_1=\pm1$ and $t_2=1$.
This means $H_2\cap H_3=\{u,x\}$, where $u=(1,1)$ and $x=(-1,1)$, corresponding to $H_2\vee H_3 = \{u,x\}$  in $\P(\A)$.
The arrangement is depicted in \Cref{fig:A:T1} and \Cref{fig:A:T2}, and its intersection poset is depicted in \Cref{fig:P:T}.
\item \label{ex:A:E} When $\G=S^1\times S^1$, $\A$ is called an \emph{elliptic arrangement}.
Here, the intersection $H_2\cap H_3$ contains the four points $((\pm1,\pm1),(1,1))\in(S^1\times S^1)^2$.
While we do not depict the arrangement itself, its intersection poset is depicted in \Cref{fig:P:E}.
\end{enumerate}

\begin{figure}[ht]
\begin{subfigure}[t]{.25\textwidth}
\centering
\begin{tikzpicture}[scale=.7]
\draw[thick,red,-] (-1,-3)--(-1,1);
\draw[thick,blue,-] (-2.5,-1)--(.5,-1);
\draw[thick,green,-] (-2,-3)--(-1,-1)--(0,1);
\node at (-1.175,0) {\textcolor{red}{\scriptsize$H_1$}};
\node at (0,-1.15) {\textcolor{blue}{\scriptsize$H_2$}};
\node at (-.25,0) {\textcolor{green}{\scriptsize$H_3$}};
\end{tikzpicture}
\caption{$\G=\R$}
\label{fig:A:R}
\end{subfigure}
\begin{subfigure}[t]{.25\textwidth}
\centering
\begin{tikzpicture}[scale=1.2]
\draw[thick,red,-] (-1,-1)--(-1,1);
\draw[-] (-1,1)--(1,1);
\draw[thick,blue,-] (-1,-1)--(1,-1);
\draw[-] (1,-1)--(1,1);
\draw[thick,green,-] (-1,-1)--(0,1);
\draw[thick,green,-] (0,-1)--(1,1);
\node at (-1.1,-1.1) {\scriptsize$u$};
\node at (0,1.1) {\scriptsize$x$};
\node at (-1.175,0) {\textcolor{red}{\scriptsize$H_1$}};
\node at (0,-1.15) {\textcolor{blue}{\scriptsize$H_2$}};
\node at (-.25,0) {\textcolor{green}{\scriptsize$H_3$}};
\end{tikzpicture}
\caption{$\G=S^1$}
\label{fig:A:T1}
\end{subfigure}
\begin{subfigure}[t]{.4\textwidth}
\centering
\includegraphics[trim={0 2cm 0 1cm},clip,width=1\textwidth]{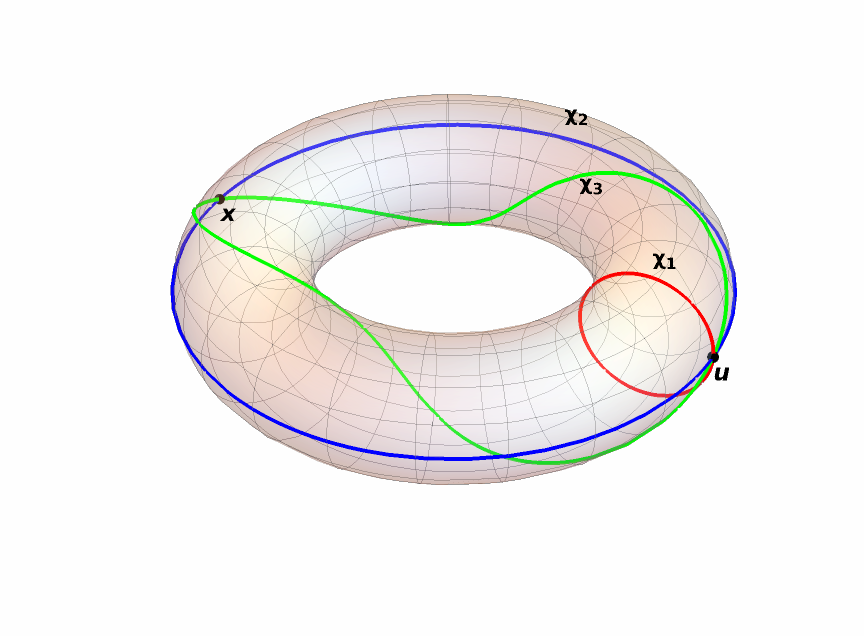} 
\caption{$\G=S^1$ with $\A$ embedded in the torus}
\label{fig:A:T2}
\end{subfigure}
\caption{Pictures of a hyperlane arrangement and a toric arrangement.}
\end{figure}

\begin{figure}[ht]
\begin{subfigure}[t]{.3\textwidth}
\centering
\begin{tikzpicture}
\node (0) at (0,0) {\scriptsize $\G^2$};
\node (1) at (-1,1.5) {\scriptsize $H_1$};
\node (2) at (0,1.5) {\scriptsize $H_2$};
\node (3) at (1,1.5) {\scriptsize $H_3$};
\node (u) at (0,3) {0}; 
\foreach \x in {1,2,3} {
\draw[-] (0.north)--(\x.south);
\draw[-] (u.south)--(\x.north);
};
\end{tikzpicture}
\caption{$\G=\R^b$}
\label{fig:P:R}
\end{subfigure}
\begin{subfigure}[t]{.3\textwidth}
\centering
\begin{tikzpicture}
\node (0) at (0,0) {\scriptsize $\G^2$};
\node (1) at (-1,1.5) {\scriptsize $H_1$};
\node (2) at (0,1.5) {\scriptsize $H_2$};
\node (3) at (1,1.5) {\scriptsize $H_3$};
\node (u) at (-1,3) {\scriptsize $u$};
\node (x) at (1,3) {\scriptsize $x$};
\foreach \x in {1,2,3} {
\draw[-] (0.north)--(\x.south);
\draw[-] (u.south)--(\x.north);
};
\foreach \x in {2,3} {
\draw[-] (x.south)--(\x.north);
};
\end{tikzpicture}
\caption{$\G=S^1\times \R^b$}
\label{fig:P:T}
\end{subfigure}
\begin{subfigure}[t]{.3\textwidth}
\centering
\begin{tikzpicture}
\node (0) at (0,0) {\scriptsize $\G^2$};
\node (1) at (-1,1.5) {\scriptsize $H_1$};
\node (2) at (0,1.5) {\scriptsize $H_2$};
\node (3) at (1,1.5) {\scriptsize $H_3$};
\node (u) at (-1,3) {\scriptsize $p_1$};
\node (y) at (0,3) {\scriptsize $p_2$};
\node (x) at (1,3) {\scriptsize $p_3$};
\node (z) at (2,3) {\scriptsize $p_4$};
\foreach \x in {1,2,3} {
\draw[-] (0.north)--(\x.south);
\draw[-] (u.south)--(\x.north);
};
\foreach \x in {2,3} {
\draw[-] (x.south)--(\x.north);
\draw[-] (y.south)--(\x.north);
\draw[-] (z.south)--(\x.north);
};
\end{tikzpicture}
\caption{$\G=S^1\times S^1\times \R^b$}
\label{fig:P:E}
\end{subfigure}
\caption{The intersection posets of the arrangements in \Cref{ex:arrangement}}
\end{figure}

In each of these examples, $\Lambda_x=\langle \chi_i\rangle_\Z$ when $x=H_i$, and $\Lambda_x=\Lambda$ when $x$ is one of the intersection points.
Note that $\Lambda_x$ is not in general equal to the integer lattice $\langle \chi_i \st H_i\supseteq x\rangle_\Z$.
For instance, taking $x$ as the element labeled so in \Cref{fig:P:T}, one has $\langle \chi_2,\chi_3\rangle_\Z$ as an index-2 subgroup of $\Lambda_x=\Lambda$.
\end{example}

\begin{remark}
Let us make two important remarks on the integer lattice $\Lambda_x$ for an element $x\in\P(\A)$.
First, if $x$ is a connected component of $\cap_{i\in T}H_i$, then $\Lambda_x=\Lambda\cap\langle \chi_i\st i\in T\rangle_{\R}$, meaning that $\Lambda_x$ does depend on the choice of $T$ for which $x$ is a component of the intersection $\cap_{i\in T}H_i$.

For the second remark, by translating when necessary, we may assume that $x$ is a subgroup of $\G^r$. Then any $\lambda\in\Lambda_x$ has $x\subseteq\ker(\lambda)$, and choosing a $\Z$-basis $(\lambda_1,\dots,\lambda_k)$ for $\Lambda_x$ allows us to realize $x$ as the kernel of a group homomorphism $\G^r\to\G^k$ with $g\mapsto(\lambda_1(g),\dots,\lambda_k(g))$. 
\end{remark}

Let $\A=\A(\G)$ be an abelian arrangement in $\G^r$, where $\G$ is a connected abelian Lie group of dimension $d$. 
We will be interested in the topology of the arrangement complement
\[\M(\A) := \G^r\setminus \cup_{H\in\A} H.\]
This fits into the context of this paper because the intersection poset $\P(\A)$ is a geometric poset, as we recall in \Cref{thm:P(A)geom}, and there is intuitive topological understanding for why $\P(\A)$ is locally geometric, explained as follows.
Let $X\in\P(\A)$. We say that $g\in X$ is a \emph{generic point} of $X$ if $g\notin Y$ for any $Y\in\P_{>X}$. 
For such $g$, we define a linear arrangement $\A_X^{(g)}$ in the tangent space $T_g\G^r\cong(\R^d)^r$ as the collection
\[ \A_X^{(g)} = \{T_gH \st H\in\A, H\ni g\}. \]
The arrangement $\A_X^{(g)}$ is called the \emph{localization} of $\A$ at $g$ because there is a small neighborhood $U\subseteq\G^r$ of $g$ for which $U\cap \M(\A)\cong \M(\A_X^{(g)})$. 
Up to canonical homeomorphism (by translation), the topological space $\M(\A_X^{(g)})$ is independent of the choice of generic $g\in X$. 
We will thus abbreviate
\begin{equation}\label{eq:localization} 
\M_X := \M(\A_X^{(g)}) \qquad \text{ for some (any) generic point } g\in X 
\end{equation}
Since the localization $\A_X^{(g)}$ is a linear arrangement, its intersection poset is a geometric lattice and, since  $\P(\A_X^{(g)})\cong \P(\A)_{\leq X}$, this implies $\P(\A)$ is locally geometric. 
For example, the linear arrangement in \Cref{ex:arrangement}\eqref{ex:A:R} is the localization of the toric arrangement in \Cref{ex:arrangement}\eqref{ex:A:T} at the point $u$, and the interval under $u$ in \Cref{fig:P:T} is isomorphic to the poset in \Cref{fig:P:R}.

\begin{theorem}[{\cite[Cor 4.4.7]{BD}}]\label{thm:P(A)geom}
Let $\A=\A(\G)$ be an abelian arrangement in $\G^r$, where $\G$ is a connected abelian Lie group of dimension $d$.
The intersection poset $\P(\A)$ is a geometric poset with $\rk(x)=(dr-\dim(x))/d$ for all $x\in\P(\A)$.
\end{theorem}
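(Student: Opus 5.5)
The plan is to verify in turn that $\P=\P(\A)$ is ranked with the stated rank function, bounded below, locally geometric, and finally that it satisfies the global geometric condition of \Cref{defn:geometric}.

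Boundedness below holds by convention, since $\G^r$ is the unique minimum. For ranks, first translate so that a given layer $x$ contains the identity. By the second remark on $\Lambda_x$, $x$ is then a connected component of the kernel of $\G^r\to\G^k$, where $k=\operatorname{rank}\Lambda_x$. Because $\Lambda_x$ is saturated in $\Lambda$, a basis of $\Lambda_x$ extends to a basis of $\Lambda$. This gives an automorphism of $\G^r$ carrying $x$ to a translate of a subgroup of the form $\{0\}^k\times\G^{r-k}$ (up to components), so $\dim x=d(r-k)$ and $(dr-\dim x)/d=k$ is an integer.

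If $y$ covers $x$, then $x\supsetneq y$ are both components of intersections. Localizing at a generic point $g\in y$ identifies $\P_{\leq y}$ with $\P(\A_y^{(g)})$, which is the geometric lattice of a linear arrangement in $(\R^d)^r$. Codimensions in that lattice are $d$ times the lattice rank, so covers raise $k$ by exactly one. Local geometricity is exactly the localization argument recorded before the theorem: $U\cap\M(\A)\cong\M(\A_X^{(g)})$ and $\P(\A_X^{(g)})\cong\P_{\leq X}$. To make this rigorous, I would check that the layers containing $g$ correspond bijectively and order-preservingly to the tangent-space intersections $T_g(\cdot)$. This works because near $g$ each layer is a translated subgroup, and its germ at $g$ is determined by its tangent space.

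The main obstacle is the global condition. Suppose $x\in\P$, a set $Y$ of hypersurfaces $H_i$, and $y\in\bigvee Y$ with $\rk(x)<\rk(y)=|Y|$. Then the characters $\chi_i$ for $i\in Y$ are linearly independent. I would argue by a dimension count on character spans. The span $\langle\chi_j : H_j\supseteq x\rangle_\R$ has dimension $\rk(x)<|Y|$, so some $\chi_a$ with $a\in Y$ lies outside it, and in particular $H_a\not\supseteq x$.

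It remains to show $x\cap H_a\neq\varnothing$, i.e.\ $a\join x\neq\varnothing$. Write $x$ as a translate of a connected subgroup $K$ of dimension $d(r-\rk x)$. The restriction $\chi_a|_K\colon K\to\G$ is a homomorphism of connected abelian Lie groups. It is nontrivial on the Lie algebra because $\chi_a\notin\Lambda_x\otimes\R$, so its differential is $\R$-linearly surjective. Its image is therefore an open connected subgroup of the connected group $\G$, hence all of $\G$. Consequently $\chi_a$ attains $g_a$ on $x$, so $x\cap H_a$ is nonempty and some component of it is an upper bound of $a$ and $x$.

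The delicate points are surjectivity when $\G$ is compact (which the connectedness argument above handles), and confirming that $\chi_a\notin\Lambda_x\otimes\R$ really gives a nonzero differential on $\mathrm{Lie}(K)=\bigcap_{\lambda\in\Lambda_x}\ker d\lambda$. The latter reduces to the linear algebra fact that $\chi\otimes\mathrm{id}$ vanishes on $\bigcap\ker(\lambda\otimes\mathrm{id})$ only if $\chi$ lies in the span of those $\lambda$. Alternatively, I would simply invoke \cite[Cor 4.4.7]{BD} for these steps.
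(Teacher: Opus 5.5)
Your proposal is correct. For local geometricity, and hence for the claim that covers raise $\rk$ by one, you use the same localization argument the paper sketches just before the statement.

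The difference is in the rest. The paper gives no argument for the rank formula or for the global geometric condition; it imports both from \cite[Cor 4.4.7]{BD}. You verify them directly:
\begin{itemize}
\item For the rank formula, saturation of $\Lambda_x$ lets a $GL_r(\Z)$ change of coordinates put the translated layer in the form $\{0\}^k\times\G^{r-k}$, so $\dim x=d(r-k)$.
\item For the global condition, you combine a dimension count on character spans with surjectivity of $\chi_a|_K$.
\end{itemize}
Your route gives a self-contained proof and slightly more than is asked: every $H_a$ with $\chi_a\notin\Lambda_x\otimes\R$ meets $x$, whatever $y$ is. This reflects the fact that connected abelian Lie groups are divisible. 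The citation buys brevity and places the result within the general framework of \cite{BD}.

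One step needs tightening. For $d>1$, a nonzero linear map $\mathrm{Lie}(K)\to\R^d$ need not be surjective, so ``nontrivial on the Lie algebra, so surjective'' is not justified as written. Your later remark only addresses non-vanishing. There are two ways to close it:
\begin{itemize}
\item Use the tensor form. Write $d\chi_a|_{\mathrm{Lie}K}=\mathrm{id}_{\R^d}\otimes(\chi_a|_V)$ with $V=\bigcap_{\lambda\in\Lambda_x}\ker\lambda\subseteq\R^r$. This map is onto because $\chi_a|_V$ is a nonzero functional.
\item Avoid Lie algebras. In your adapted coordinates, $\chi_a|_K(g_{k+1},\dots,g_r)=\sum_{j>k}c_jg_j$ with some $c_j\neq 0$. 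This is onto $\G$ because $\G\cong(S^1)^a\times\R^b$ is divisible.
\end{itemize}
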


\subsection{A topological Orlik--Solomon algebra}\label{sec:topOS}
When $\G=\R^d$, the cohomology ring of an arrangement complement $\M(\A)$ depends only on the parity of $d$ and the combinatorics of the arrangement. When $d=2l$ is even (so that $\G\cong\C^l$), the cohomology ring is isomorphic to the Orlik--Solomon algebra of the intersection poset $\P(\A)$ with generators in degree $d-1$ (\cite[Thm 5.2]{OS} when $l=1$ and \cite[Cor 5.6]{dLS} when $l>1$), while the $d$ odd case behaves differently. We thus focus here on the even-dimensional case only, equivalently when $\G$ is a complex abelian Lie group.
Although the cohomology ring of an arrangement complement is isomorphic to an Orlik--Solomon algebra in the case $\G\cong\C^l$, more generally the cohomology of the ambient space $\G^r$ affects the cohomology of the arrangement complement. This leads us to first define a topological Orlik--Solomon algebra, which combines both the combinatorial and topological inputs of an arrangement in $\G^r$.
This generalizes the ``graded Orlik--Solomon algebra'' for toric arrangements  in \cite{pagaria} and the model for elliptic arrangements  in \cite{bibby-elliptic}.

\begin{defn}[Topological OS Algebra]\label{def:B}
Let $\G$ be a connected complex abelian Lie group with real dimension $d=\dim\G$, $\Lambda=\Z^r$, and let $\P=\P(\mathcal{A})$ be the intersection poset for an arrangement $\mathcal{A}$ in $\G^r$. 
Adjust the grading on the Orlik--Solomon algebra $A^*(\P)$ so that the degree of the generator $e_\alpha$ is equal to $|\alpha|\cdot(d-1)$. 
Define a bigraded algebra $B=B(\mathcal{A})$ as the quotient of $H^*(\G^r;\Q)\otimes A^*(\P)$ by the homogeneous ideal 
\[ \langle
\chi^*(z)\otimes e_{(x,T)} \text{ where } (x,T)\in\I(\P), z\in H^{1}(\G), \chi\in{\Lambda}_x
\rangle.\]
\end{defn}

Supposing $\G\cong (S^1)^a\times\R^b$, we can write an explicit list of generators and relations for $B(\A)$.
First let $z_1,\dots,z_a\in H^1(\G;\Q)$ generate $H^*(\G;\Q)$ as an exterior algebra, and let $\pi_i:\G^r\to\G$ be the projection onto the $i$th coordinate so that the elements $z_j[i]:=\pi_i^*(z_j)$ generate $H^*(\G^r;\Q)$ as an exterior algebra.
Let $F$ be the free graded-commutative algebra generated by 
\begin{itemize}
\item $z_j[i]$ in bidegree $(1,0)$, where $1\leq j\leq a$ and $1\leq i\leq r$, and 
\item $e_\alpha$ in bidegree $(0,|\alpha|(d-1))$, where $\alpha\in\I\setminus\{\hat{0}\}$.
\end{itemize}
Then the algebra $B(\A)$ is the quotient of $F$ by the ideal generated by
    \begin{enumerate}
    \item[\eqref{OSi}] $e_\alpha e_\beta$ when $\alpha\meet\beta\neq\{\hat{0}\}$;
    \item[\eqref{OSii}] $e_\alpha e_\beta-\sgn(\alpha,\beta)\sum\limits_{\gamma\in(\alpha\join\beta)\cap\I}e_\gamma$ when $\alpha\meet\beta=\{\hat{0}\}$; 
    \item[\eqref{OSiii}] $\sum\limits_{a\in\at(\zeta)}\sgn(a,\omega\setminus a)e_{\omega\setminus a}$ when $\omega$ is a $\zeta$-unicircular element; and
\item[\mylabel{Biv}{iv}] $\sum_{i=1}^r \chi[i]z_j[i]e_{(x,T)}$ where $1\leq j\leq a$, $(x,T)\in\I(\P)$, and $\chi=(\chi[1],\dots,\chi[r])\in{\Lambda}_x$.
\end{enumerate}

\begin{example}\label{ex:ellipticB}
Let $\G=S^1\times S^1$ and consider the arrangement $\A=\{H_1,H_2,H_3\}$ from \Cref{ex:arrangement}\eqref{ex:A:E}. 
The algebra $B(\A)$ is generated by 
$z_1[1], z_2[1], z_1[2], z_2[2]$ in degree $(1,0)$; $e_1,e_2,e_3$ in degree $(0,1)$; and 
$e_{(p_1,12)}, e_{(p_1,13)}, e_{(p_1,23)}, e_{(p_2,23)}, e_{(p_3,23)}, e_{(p_4,23)}$ in degree $(0,2)$.
The relations here are given by $e^2_\alpha$ for all $\alpha$;
$e_\alpha e_\beta$ and $z_j[i]e_\beta$ when $\rk(\beta)=2$; and
\[ e_1e_2-e_{(p_1,12)},\quad  e_1e_3-e_{(p_1,13)},\quad e_2e_3-\sum_{i=1}^4 e_{(p_i,23)}, 
\quad  e_{(p_1,23)}-e_{(p_1,13)}+e_{(p_1,12)} \]
\[ z_1[1]e_1,\quad z_2[1]e_1,\quad z_1[2]e_2,\quad z_2[2]e_2,\quad (2z_1[1]-z_1[2])e_3,\quad (2z_2[1]-z_2[2])e_3\]
\end{example}

\begin{remark}\label{rmk:geom-joinindep}
Since the poset $\P(\A)$ is in fact geometric, \cite[Prop 4.5(2)]{bibby} implies that, for all $\alpha,\beta\in S$, either $(\alpha\vee \beta)\cap\I=\varnothing$ or $\alpha\vee \beta\subseteq\I$. This means relations \eqref{OSi} and \eqref{OSii} can be rewritten as 
\begin{enumerate}
\item[(I)] $e_\alpha e_\beta$ when $\alpha\meet\beta\neq\{\hat{0}\}$ or $\alpha\vee\beta\not\subseteq\I$; and
\item[(II)] $e_\alpha e_\beta-\sgn(\alpha,\beta)\sum\limits_{\gamma\in\alpha\join\beta}e_\gamma$ when $\alpha\meet\beta=\{\hat{0}\}$ and $\alpha\vee \beta\subseteq\I$.
\end{enumerate}
\end{remark}

The following lemma will provide us with a useful perspective on the defining ideal, and in turn a vector space decomposition of $B(\A)$ building on the decomposition of  $A(\P)$ in \Cref{cor:brieskorn}.

\begin{lemma}\label{prop:K_x}
Let $\G$ be a connected complex abelian Lie group and $\P=\P(\mathcal{A})$ the intersection poset for an arrangement $\mathcal{A}$ in $\G^r$. For an element $x\in\P$ let $K_x=\ker(H^*(\G^r;\Q)\to H^*(x;\Q))$. Then: 
\begin{enumerate}
\item \label{prop:K_x:1} for any $x\in\P$, the ideal $K_x\subseteq H^*(\G^r;\Q)$ is generated by $\chi^*(z)$ where $z\in H^{1}(\G;\Q)$ and $\chi\in{\Lambda}_x$,
\item\label{prop:K_x:2} for any $x\in\P$, there is a ring isomorphism $H^*(\G^r;\Q)/K_x\cong H^*(x;\Q)$, and
\item \label{prop:K_x:3} if $x,y\in\P$ with $x\leq y$, then $K_x\subseteq K_y$.
\end{enumerate}
\end{lemma}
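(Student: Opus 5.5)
The plan is to reduce everything to linear algebra on the lattice $\Lambda$ by putting $x$ in normal form. Translating if necessary, assume $x$ is a connected subgroup of $\G^r$; translation is homotopic to the identity on $\G^r$, so neither $K_x$ nor $H^*(x)$ changes. Choose a $\Z$-basis $\lambda_1,\dots,\lambda_k$ of $\Lambda_x$. Since $\Lambda_x=\Lambda\cap(\text{a real subspace})$ is saturated in $\Lambda$, it extends to a $\Z$-basis $\lambda_1,\dots,\lambda_r$ of $\Lambda$. The resulting change of coordinates $\Phi\colon\G^r\to\G^r$, $g\mapsto(\lambda_1(g),\dots,\lambda_r(g))$, is a Lie group automorphism with inverse given by the inverse unimodular matrix. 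By the Remark above, $x$ is the identity component of $\ker(\lambda_1,\dots,\lambda_k)$. Under $\Phi$ this kernel becomes $\{e\}^k\times\G^{r-k}$, which is already connected since $\G$ is connected, so $\Phi(x)=\{e\}^k\times\G^{r-k}$.

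In these coordinates the inclusion $x\into\G^r$ is, up to homotopy, the inclusion $\G^{r-k}\into\G^k\times\G^{r-k}$ at the identity. By K\"unneth, $H^*(\G^r)=H^*(\G)^{\otimes r}$ is the exterior algebra on the classes $\pi_i^*(z_j)$ (writing $\G\simeq(S^1)^a$). Restriction to $x$ kills $\pi_i^*(z_j)$ for $i\le k$ and is the identity on the remaining factors. Hence restriction is surjective with kernel the ideal generated by $\pi_i^*(z)$, $i\le k$, $z\in H^1(\G)$. This gives \eqref{prop:K_x:2} immediately, with the ring isomorphism induced by the surjective restriction map.

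For \eqref{prop:K_x:1}, pull back along $\Phi$: $\pi_i\circ\Phi=\lambda_i$, so the generators become $\lambda_i^*(z)$ with $i\le k$, each of which lies in the proposed generating set. Conversely, for arbitrary $\chi\in\Lambda_x$, write $\chi=\sum_i c_i\lambda_i$ with $c_i\in\Z$. The map $\chi^*\colon H^1(\G)\to H^1(\G^r)$ is additive in $\chi$, because the group law $\G\times\G\to\G$ induces addition on $H^1$ (the diagonal followed by multiplication gives the primitive coproduct in degree one). Therefore $\chi^*(z)=\sum_i c_i\lambda_i^*(z)$ lies in the ideal. Alternatively, $\chi$ restricts to the constant map on the connected group $x$, so $\chi^*(z)$ maps to $0$ in $H^*(x)$. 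Either way, the two ideals coincide.

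For \eqref{prop:K_x:3}, if $x\le y$ then $y\subseteq x$ as subsets, and every $H_i\supseteq x$ also contains $y$. Hence $\Lambda_x\subseteq\Lambda_y$, and part \eqref{prop:K_x:1} gives $K_x\subseteq K_y$. Equivalently, and independently of the dependence of $\Lambda_x$ on $T$ noted in the Remark, restriction to $y$ factors through restriction to $x$. The main point requiring care is the normal form step: justifying that $\Lambda_x$ is saturated, and that $x$ is exactly the identity component of $\ker(\lambda_1,\dots,\lambda_k)$ rather than merely contained in it. The latter follows from the Remark together with a dimension comparison via \Cref{thm:P(A)geom}.
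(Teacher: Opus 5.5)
Your proof is correct, but you reach the key step by a different route from the paper. The paper keeps $x$ as the kernel of $\lambda_x\colon\G^r\to\G^k$ and uses the fibration $x\to\G^r\to\G^k$. The homotopy exact sequence gives a split short exact sequence of fundamental groups, and dualizing gives $0\to H^1(\G^k;\Q)\to H^1(\G^r;\Q)\to H^1(x;\Q)\to0$, so $K_x^1=\operatorname{im}\lambda_x^*$. The paper then appeals to the exterior-algebra structure of $H^*(\G^r;\Q)$ to conclude that $K_x$ is generated in degree one.

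You instead use saturation of $\Lambda_x$ to complete its basis to a basis of $\Lambda$. This produces an automorphism $\Phi$ of $\G^r$ carrying $x$ onto the coordinate subgroup $\{e\}^k\times\G^{r-k}$. After that, surjectivity of restriction and the generators of its kernel are read off from K\"unneth, with no homotopy theory.

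Your route is more elementary, and it makes explicit three things the paper passes over quickly:
\begin{itemize}
\item why $\lambda_x$ has connected kernel exactly equal to $x$;
\item why the kernel of the induced map of exterior algebras is generated by its degree-one part (obvious once the map is a coordinate projection);
\item why $\chi^*(z)\in K_x$ for every $\chi\in\Lambda_x$, not only for basis elements (your additivity of $\chi\mapsto\chi^*$ on $H^1$, or the observation that $\chi$ is constant on $x$).
\end{itemize}
The paper's route, in exchange, is coordinate-free and avoids choosing a complement to $\Lambda_x$. Your alternative argument for (3), factoring restriction to $y$ through restriction to $x$, is also cleaner than the paper's, since it does not depend on (1).

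One small simplification: the point you flag at the end needs no dimension count. The subgroup $\ker\lambda_x=\Phi^{-1}(\{e\}^k\times\G^{r-k})$ is connected and contains the identity. It also lies in $\bigcap_{i\in T}\ker\chi_i$, because each $\chi_i$ is an integer combination of the $\lambda_j$. Hence it lies in the identity component of that intersection, which is $x$.
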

\begin{proof}
As $x$ is a translate of some connected Lie subgroup of $\G^r$, and translation in Lie groups is homotopic to the identity, we may assume that $x$ is a connected Lie subgroup of $\G^r$. 
Let $\{\lambda_1,\dots,\lambda_k\}$ be a basis for the  integer lattice ${\Lambda}_x$ and consider the surjective homomorphism
\[\lambda_x:=(\lambda_1,\dots,\lambda_k):\G^r\to \G^k,\]
with kernel equal to $x$.
The fiber bundle $x\to \G^r\stackrel{\lambda_x}{\longrightarrow} \G^k$ induces
a split short exact sequence on fundamental groups 
$0\to\pi_1(x)\to\pi_1(\G^r)\to\pi_1(\G^k)\to0.$
As these fundamental groups are free abelian and isomorphic to the respective first integral homology groups, applying $\operatorname{Hom}(-,\Q)$ yields a short exact sequences on the first rational cohomology groups
\begin{equation*}
0\to H^1(\G^k;\Q)\stackrel{\lambda_x^*}{\longrightarrow} H^1(\G^r;\Q)\to H^1(x;\Q)\to0.
\end{equation*}
Denote $K_x^1 = \ker(H^1(\G^r;\Q)\to H^1(x;\Q)) = \operatorname{im}(\lambda_x^*:H^1(\G^k;\Q)\to H^1(\G^r;\Q))$. 
As the cohomology ring of an abelian Lie group is canonically isomorphic to the exterior algebra generated by the first cohomology group, it follows 
that $K_x$ is generated as an ideal by $K_x^1$.
This means that $K_x$ is generated by $\lambda_x^*(w)$ where $w\in H^1(\G^k;\Q)$.
Using the K\"unneth Formula and the projections $\pi_i\colon \G^k\to \G$ onto each coordinate, 
this implies that $K_x$ is generated by 
\[\lambda_x^*(\pi_i^*(z)) = (\pi_i\circ \lambda_x)^*(z) = \lambda_i^*(z),\]
where $z\in H^1(\G;\Q)$ and $1\leq i \leq k$.
Since $\lambda_1,\dots,\lambda_k$ is a basis for ${\Lambda}_x$, we conclude that $K_x$ is generated by $\chi^*(z)$ where $z\in H^1(\G;\Q)$ and $\chi\in{\Lambda}_x$ as desired for the first claim.

The second claim follows from the first by the First Isomorphism Theorem, noting that 
surjectivity of $H^1(\G^r;\Q)\to H^1(x;\Q)$ above implies surjectivity of $H^*(\G^r;\Q)\to H^*(x;\Q)\cong\bigwedge H^1(x;\Q)$.

Lastly, the third claim follows from the first with the observation that if $x\leq y$, then ${\Lambda}_x\subseteq{\Lambda}_y$.
\end{proof}

Building on the analogous theorem for $A(\P)$ in \Cref{cor:brieskorn}, we establish a vector decomposition and Hilbert series formula for $B(\A)$.

\begin{prop}[$\P$-decomposition of $B(\A)$]\label{lem:Bvs}
Let $\G=(S^1)^a\times \R^b$ with $a+b$ even, and let $\A$ an  arrangement in $\G^r$ with intersection poset $\P=\P(\A)$.
There is a vector space decomposition
\[ B(\mathcal{A}) \cong \bigoplus_{x\in\P} H^*(x;\Q)\otimes A^{\rk(x)}(\P_{\leq x}) \]
where the degree-$(p,j(d-1))$ component is \[B^{p,j(d-1)}(\A)\cong \bigoplus_{\substack{x\in\P \\ \rk(x)=j}} H^p(x;\Q)\otimes A^{j}(\P_{\leq x})\]
and $B^{p,q}=0$ when $q$ is not divisible by $d-1$.
The bigraded Hilbert series of $B(\A)$ is
\[\sum_{p,q\geq0} \dim(B^{p,q})t^pu^q = (1+t)^{a(r-\rk(\P))}(-u)^{\rk(\P)(a+b-1)} \chi_{\P}\left( -\dfrac{(1+t)^a}{u^{a+b-1}} \right) \]
where $\chi_\P$ is the characteristic polynomial of $\P$.
\end{prop}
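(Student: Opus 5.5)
The plan is to filter $B(\A)$ by the $\P$-decomposition of $A(\P)$ from \Cref{cor:brieskorn} and identify each piece using \Cref{prop:K_x}. As an $H^*(\G^r;\Q)$-module, $B(\A)$ is the quotient of $H^*(\G^r;\Q)\otimes A(\P)$ by the submodule generated by $K_x\cdot e_{(x,T)}$ for $(x,T)\in\I(\P)$. By \Cref{prop:K_x}\eqref{prop:K_x:1}, the defining ideal of \Cref{def:B} is the ideal generated by the elements $k\otimes e_{(x,T)}$ with $k\in K_x$. Since $H^*(\G^r)$ acts on the first tensor factor and $K_x$ is an ideal, this ideal equals the $H^*(\G^r)$-submodule spanned by $K_x\otimes e_{(x,T)}\cdot A(\P)$. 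I will show that this submodule is $N:=\bigoplus_{x\in\P} K_x\otimes A_x$, where $A_x$ is the span of the $\nbc$ basis elements $e_{(x,T)}$ as in the proof of \Cref{cor:brieskorn}.

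For the inclusion of $N$ in the ideal, note that $A_x$ is spanned by elements $e_{(x,T)}$ with $(x,T)$ independent, so $K_x\otimes A_x$ lies among the generators. The reverse inclusion is the main obstacle. Take a generator $k\otimes e_{(x,T)}$ with $(x,T)\in\I$ and multiply it by an arbitrary $e_\beta$. The product $e_{(x,T)}e_\beta$ is, by relations \eqref{OSi}--\eqref{OSii}, either $0$ or a signed sum of $e_\gamma$ over $\gamma\in((x,T)\join\beta)\cap\I$. Each such $\gamma=(y,T')$ satisfies $y\geq x$, so $K_x\subseteq K_y$ by \Cref{prop:K_x}\eqref{prop:K_x:3}. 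This keeps us inside the span of elements $k'\otimes e_{(y,T')}$ with $k'\in K_y$. It remains to rewrite a non-$\nbc$ independent $e_{(y,T')}$ in the $\nbc$ basis. The Gr\"obner reduction of \Cref{thm:gb} by type \eqref{OSiii} relations only replaces $e_{\omega\setminus a}$ by other $e_{\omega\setminus b}$, and these share the same poset element $y$. Hence $e_{(y,T')}\in A_y$, which is the key point: the $\P$-grading is preserved by the \eqref{OSiii} reductions, even though it is not multiplicative. Therefore the ideal is contained in $\sum_y K_y\otimes A_y=N$.

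Granting this, $B(\A)\cong \bigoplus_x (H^*(\G^r)/K_x)\otimes A_x\cong\bigoplus_x H^*(x;\Q)\otimes A^{\rk(x)}(\P_{\leq x})$, using \Cref{prop:K_x}\eqref{prop:K_x:2} and $A_x\cong A^{\rk(x)}(\P_{\leq x})$ from \Cref{cor:brieskorn}. The generator $e_\alpha$ sits in degree $|\alpha|(d-1)$, and $|\alpha|=\rk(x)$ for $\alpha=(x,T)\in\I$, so the bigraded component statement follows, along with vanishing when $d-1\nmid q$.

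For the Hilbert series, each $x$ is homotopy equivalent to a torus. By \Cref{thm:P(A)geom}, $\dim x=d(r-\rk x)$, and $x\cong (S^1)^{a(r-\rk x)}\times\R^{b(r-\rk x)}$ since $x$ is a translate of the kernel of $\lambda_x\colon\G^r\to\G^{\rk(x)}$. Hence its Poincar\'e polynomial is $(1+t)^{a(r-\rk x)}$. Combining this with $\dim A^{\rk(x)}(\P_{\leq x})=(-1)^{\rk x}\mu(x)$ gives
\[\sum_x (1+t)^{a(r-\rk x)}\mu(x)(-u^{d-1})^{\rk x},\]
with $d=a+b$. Factoring out $(1+t)^{a(r-\rk\P)}(-u)^{\rk(\P)(a+b-1)}$ then rewrites this sum as $\chi_\P$ evaluated at $-(1+t)^a/u^{a+b-1}$, which is a routine check of exponents.
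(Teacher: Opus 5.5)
Your proposal is correct and takes essentially the paper's route. Both arguments identify the defining ideal with $\sum_{(x,T)\in\nbc} K_x\otimes\Q e_{(x,T)}$ (your $\bigoplus_x K_x\otimes A_x$) using $K_x\subseteq K_y$ for $y\geq x$, pass to the quotient via \Cref{prop:K_x}, and compute the Hilbert series termwise, where the sign check uses that $a+b$ is even. Your observation that type \eqref{OSiii} reductions never change the underlying element of $\P$ also usefully supplies the justification the paper leaves implicit for the claim that $\langle e_{(x,T)}\rangle$ is spanned by $\nbc$ elements $e_{(y,S)}$ with $y\geq x$: each $\omega\setminus b$ with $b\in\at(\zeta)$ is independent of rank $\rk(\omega)$, hence lies over the same element as $\omega$.
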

\begin{proof}
The $\nbc$ basis for $A(\P)$ from \Cref{cor:vsbasis} provides a decomposition $H^*(\G^r;\Q)\otimes A(\P)\cong\bigoplus_{\beta\in\nbc} H^*(\G^r;\Q)\otimes\Q e_{\beta}$. 
Let $J\subseteq H^*(\G^r;\Q)\otimes A(\P)$ be the defining ideal of $B(\A)$, which by \Cref{prop:K_x}\eqref{prop:K_x:1} can be written as
$J=\sum_{(x,T)\in\I} K_x\otimes\langle e_{(x,T)}\rangle$. 

Consider $(x,T)\in\I$. Any element of the ideal $\langle e_{(x,T)}\rangle\subseteq A(\P)$ can be expressed as a linear combination of $e_{(y,S)}$ where $(y,S)\in\nbc$ and $y\geq x$. Since $K_x\subseteq K_y$ whenever $y\geq x$ by \Cref{prop:K_x}\eqref{prop:K_x:3}, we conclude that $K_x\otimes\langle e_{(x,T)}\rangle \subseteq \sum_{(y,S)\in\nbc} K_y\otimes \Q e_{(y,S)}$.
It follows that
\begin{equation}\label{eq:idealJ} 
J = \sum_{(x,T)\in\I} K_x\otimes\langle e_{(x,T)}\rangle = \sum_{(x,T)\in\nbc} K_x\otimes\Q  e_{(x,T)}
\end{equation}
Then applying \eqref{eq:idealJ}, \Cref{prop:K_x}\eqref{prop:K_x:2}, and \Cref{cor:brieskorn}, in order, we obtain
\[
B(\A) = 
\dfrac{\bigoplus_{(x,T)\in\nbc} H^*(\G^r;\Q)\otimes\Q e_{(x,T)}}{\sum_{(x,T)\in\nbc} K_x\otimes \Q e_{(x,T)}} 
\cong \bigoplus_{(x,T)\in\nbc} H^*(x;\Q)\otimes \Q e_{(x,T)} 
\cong \bigoplus_{x\in\P} H^*(x;\Q)\otimes A^{\rk(x)}(\P_{\leq x}) 
\]

The Hilbert series of $B(\A)$ is thus
\begin{align*}
\sum_{p,q\geq 0} \dim B^{p,q}(\A) t^pu^q
&= \sum_{x\in\P} \sum_{p\geq 0} \dim(H^p(x;\Q))  t^p \dim(A^{\rk(x)}(\P_{\leq x})) u^{\rk(x)(a+b-1)} \\
&= \sum_{x\in\P} (1+t)^{a(r-\rk(x))}(-1)^{\rk(x)}\mu(x)u^{\rk(x)(a+b-1)} \\
&= (1+t)^{a(r-\rk(\P))}(-u)^{\rk(\P)(a+b-1)} \sum_{x\in\P} \mu(x) \left( -\dfrac{(1+t)^a}{u^{a+b-1}} \right)^{\rk(\P)-\rk(x)} \\
&= (1+t)^{a(r-\rk(\P))}(-u)^{\rk(\P)(a+b-1)} \chi_{\P}\left( -\dfrac{(1+t)^a}{u^{a+b-1}} \right) 
\end{align*}
\end{proof}

\begin{example}
The characteristic polynomial of the poset $\P(\A)$ in \Cref{fig:P:E} is $\chi_\P(t)=t^2-3t+5$, and the associated algebra $B=B(\A)$ from \Cref{ex:ellipticB} has bigraded Hilbert series 
\[ \sum_{p,q\geq0} \dim B^{p,q} t^pu^q = 
(-u)^2\chi_\P\left(-\frac{(1+t)^2}{u}\right)
=1+4t+6t^2+4t^3+t^4+3u+6tu+3t^2u+5u^2.\]
\end{example}

\subsection{The Leray spectral sequence}\label{sec:Leray}
When $\G$ is complex and $\A$ is an arrangement in $\G^r$, we can relate the topological Orlik--Solomon algebra $B(\A)$ to the cohomology of the arrangement complement $\M(\A)$. 
The main tool is the Leray spectral sequence for the inclusion 
$f\colon \M(\A)\to \G^r$ given by 
\[ E_2^{p,q} = H^p(\G^r;R^qf_*\Q_{\M(\A)}) \Longrightarrow H^{p+q}(\M(\A);\Q), \]
where the higher direct image sheaf $R^qf_*\Q_{\M(\A)}$ is the sheafification of the presheaf on $\G^r$ given by $U\mapsto H^q(U\cap \M(\A);\Q)$.
We adopt the same method of \cite{totaro} for configuration spaces and \cite{bibby-elliptic,pagaria} for $\dim\G=2$
to prove the following.

\begin{theorem}[OS algebra in the Leray spectral sequence]\label{thm:Leray}
Let $\G$ be a connected complex abelian Lie group, and let $\mathcal{A}$ be an arrangement in $\G^r$ with complement $\M(\mathcal{A})$.
The bigraded algebra $B(\mathcal{A})$ is isomorphic to the second page of the Leray spectral sequence for the inclusion $f\colon \M(\mathcal{A})\into \G^r$.
\end{theorem}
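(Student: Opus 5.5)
Following Totaro's method, we first compute the higher direct image sheaves $R^qf_*\Q$, then their cohomology on $\G^r$, and finally compare the result with the presentation of $B(\A)$ given after \Cref{def:B}.

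\textbf{Step 1: stalks of $R^qf_*\Q$.} The stalk of $R^qf_*\Q$ at a point $g\in\G^r$ is $H^q(U\cap\M(\A);\Q)$ for a small ball $U$ around $g$. If $X\in\P$ is the unique maximal layer containing $g$, then $g$ is a generic point of $X$, so this stalk is $H^q(\M_X;\Q)$. Since $\M_X$ is the complement of a linear arrangement in $(\R^d)^r$ with $d$ even, \cite{OS,dLS} identify $H^*(\M_X;\Q)$ with $A(\P_{\leq X})$, where the generators sit in degree $d-1$. In particular the stalks vanish unless $q=j(d-1)$, and $H^{j(d-1)}(\M_X)\cong A^j(\P_{\leq X})$.

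\textbf{Step 2: decomposition of $R^qf_*\Q$.} Let $X\in\P$ have rank $j$ and let $i_X\colon X\into\G^r$ be the inclusion. By the $\P$-decomposition of \Cref{cor:brieskorn}, the top-degree piece $A^{j}(\P_{\leq X})$ appears in the stalk at every point of $X$. Since $X$ is a translate of a subgroup, these pieces are locally constant along $X$ and in fact constant, because the local models are canonically identified by translation. The plan is then to prove
\[R^{j(d-1)}f_*\Q\;\cong\;\bigoplus_{\rk(X)=j} (i_X)_*\underline{A^{j}(\P_{\leq X})}.\]
This comes from the Brieskorn-type decomposition of the local Orlik--Solomon algebras, which is compatible with the restriction maps of the sheaf $\F(\P)$ in \Cref{prop:OSmaps}. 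Concretely, the restriction from a point of $X$ to a nearby point lying on a smaller layer $Y\leq X$ is the map $r^X_Y$. On the $X$-component $A^j(\P_{\leq X})$ it acts as the identity when $Y=X$ and as zero when $Y<X$. Taking cohomology gives
\[E_2^{p,j(d-1)}=\bigoplus_{\rk X=j}H^p(X;\Q)\otimes A^j(\P_{\leq X}),\]
and $E_2^{p,q}=0$ when $q$ is not divisible by $d-1$. This is exactly the decomposition of $B(\A)$ in \Cref{lem:Bvs}.

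\textbf{Step 3: the multiplicative comparison (main obstacle).} It remains to produce an algebra isomorphism, not just an isomorphism of vector spaces. The $E_2$ page is a quotient of $H^*(\G^r)\otimes H^0(\G^r;R^*f_*\Q)$ via the cup product, and $H^0(\G^r;R^*f_*\Q)$ is the algebra of global sections of $\F(\P)$. By \Cref{thm:sheaf} this algebra is $A(\P)$.

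We therefore define a map $H^*(\G^r)\otimes A(\P)\to E_2$ sending $c\otimes e_\alpha$ to $c\cdot e_\alpha$. This map is multiplicative because the cup product on $E_2$ is induced by the sheaf product together with the cup product on $H^*(\G^r)$. It is surjective by Step 2, since $H^*(\G^r)\to H^*(X)$ is surjective by \Cref{prop:K_x}.

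Next we check that the kernel contains the relations defining $B(\A)$. Take $e_{(x,T)}$, a section supported on the layer $x$, and let $\chi\in\Lambda_x$. Then $\chi^*(z)\cdot e_{(x,T)}$ factors through $H^*(x)$, where $\chi^*(z)$ restricts to zero by \Cref{prop:K_x}, so the product vanishes.

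Finally, the induced surjection $B(\A)\to E_2$ is an isomorphism by comparing dimensions in each bidegree, using \Cref{lem:Bvs}. The delicate points are showing that $e_{(x,T)}$ is genuinely supported on $x$ and that the module structure of $E_2$ over $H^*(\G^r)$ acts through restriction to $X$. I would handle both by working with the explicit decomposition of the sheaf from Step 2.
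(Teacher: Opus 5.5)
Your proposal is correct and follows essentially the same route as the paper. Both arguments show that $R^qf_*\Q$ vanishes unless $d-1$ divides $q$, establish the decomposition $R^{j(d-1)}f_*\Q\cong\bigoplus_{\rk(X)=j}A^j(\P_{\leq X})\otimes(i_X)_*\Q_X$, and deduce the splitting of $E_2$. Both then build a surjective algebra map from $H^*(\G^r;\Q)\otimes A(\P)$ that kills $\chi^*(z)\otimes e_{(x,T)}$ by \Cref{prop:K_x}, and conclude with a dimension count against \Cref{lem:Bvs}.

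The differences are only in how two steps are justified. For the sheaf decomposition, the paper constructs the isomorphism explicitly by passing to the subarrangements $\A|_x$. These are translation-invariant along $x$, which makes constancy along $x$ immediate, and so the paper never has to analyze specialization maps of the full sheaf as you do. For the algebra identification $E_2^{0,*}\cong A(\P)$, the paper reads it off an explicit product formula on $E_2$, whereas you obtain it from \Cref{thm:sheaf} as global sections of the Orlik--Solomon sheaf. Your route is a clean alternative for that step.
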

\begin{proof}
Denote by $E_2(\mathcal{A})$ the bigraded algebra on the second page of the Leray spectral sequence.
We start by examining the sheaf $R^qf_*\Q_{\M(\A)}$.
Since, for small neighborhoods $U$, $U\cap \M(\A)$ is homeomorphic to the complement of a linear arrangement, the group $H^q(U\cap \M(\A);\Q)$ is trivial unless $d-1$ divides $q$. In particular, $R^qf_*\Q_{\M(\A)}=0$ unless $d-1$ divides $q$, so let us assume $q=j(d-1)$. The goal is to provide a useful description of the sheaf $R^{j(d-1)}f_*\Q_{\M(\A)}$ in \eqref{eq:sheafiso} and then of $E_2(\A)$ in \eqref{eq:Edecomp}, after which we describe the algebra structure and finally compare to $B(\A)$.

For $x\in\P(\A)$ with $\rk(x)=j$, let $\A|_x=\{H\in\A\st x\subseteq H\}$, an arrangement in $\G^r$ with complement $\M(\A|_x)$. Let $i_x\colon x\into\G^r$ and $f_x\colon \M(\A|_x)\into\G^r$ be the inclusions.
The sheaf $(i_x)^*R^{j(d-1)}(f_x)_*\Q_{\M(\A|_x)}$ is a locally constant sheaf on $x$, because every point of $x$ has a neighborhood $U$ for which sections are $H^{j(d-1)}(U\cap \M(\A|_x))\cong H^{j(d-1)}(\M_x)\cong A^j(\P_{\leq x})$, where $\M_x$ is the localization from \eqref{eq:localization}. It is in fact a constant sheaf since stalks can be canonically identified via translation.
This yields an isomorphism 
\begin{equation}\label{eq:sheaf1}
A^j(\P_{\leq x})\otimes(i_x)_*\Q_x
\cong
(i_x)_*(i_x)^*R^{j(d-1)}(f_x)_*\Q_{\M(\A|_x)}.
\end{equation}
Next, the inclusions $i_y\colon y\into\G^r$ induce a natural morphism of sheaves
\[R^{j(d-1)}(f_x)_*\Q_{\M(\A|_x)}\to \bigoplus_{\substack{y\in\P(\A|_x) \\ \rk(y)=j}} (i_y)_*(i_y)^*R^{j(d-1)}(f_x)_*\Q_{\M(\A|_x)}. \]
On either side, the stalk at a point is trivial unless it is a point in some $y\in\P(\A|_x)$ with $\rk(y)=j$, in which case the stalk is $H^{j(d-1)}(\M_y)$. 
Being an isomorphism on stalks, it is necessarily an isomorphism of sheaves, and restricting its inverse to the summand indexed by $x$ yields a morphism of sheaves
\begin{equation}\label{eq:sheaf2}
(i_x)_*(i_x)^*R^{j(d-1)}(f_x)_*\Q_{\M(\A|_x)}
\to
R^{j(d-1)}(f_x)_*\Q_{\M(\A|_x)}.
\end{equation}
The inclusion $\M(\A)\into \M(\A|_x)$ induces a morphism $R^{j(d-1)}(f_x)_*\Q_{\M(\A|_x)}\to R^{j(d-1)}f_*\Q_{\M(\A)}$, and precomposing with \eqref{eq:sheaf1} and \eqref{eq:sheaf2} yields a morphism
\begin{equation}\label{eq:sheaf3} A^j(\P_{\leq x})\otimes(i_x)_*\Q_x
\to R^{j(d-1)}f_*\Q_{\M(\A)}. \end{equation}
Finally, the sum of morphisms in \eqref{eq:sheaf3} over all $x\in\P(\A)$ with $\rk(x)=j$ yields a morphism
\begin{equation}\label{eq:sheafiso}
\bigoplus_{\substack{x\in\P(\A) \\ \rk(x)=j}}
A^j(\P_{\leq x})\otimes(i_x)_*\Q_x
\to R^{j(d-1)}f_*\Q_{\M(\A)}.
\end{equation}
The morphism \eqref{eq:sheafiso} is a sheaf isomorphism because it is an isomorphism on stalks, noting that the stalk at a generic point of $y\in\P(\A)$ is 
\[ H^{j(d-1)}(\M_y;\Q) \cong A^j(\P_{\leq y}) 
\cong\bigoplus_{\substack{x\in{\P_{\leq y}} \\ \rk(x)=j}} A^j(\P_{\leq x}) \]
by the $\P$-decomposition.
The sheaf isomorphism \eqref{eq:sheafiso} induces a vector space decomposition
\begin{equation}\label{eq:Edecomp}
E_2^{p,j(d-1)} = H^p(\G^r; R^{j(d-1)}f_*\Q_{\M(\A)})
\cong \bigoplus_{\substack{x\in\P(\A) \\ \rk(x)=j}} H^p(x;\Q)\otimes A^j(\P_{\leq x}).
\end{equation}

The algebra structure in $E_2(\mathcal{A})$ is described as follows. 
For $x_1,x_2\in\P$ and $z\in x_1\vee x_2$, we have inclusions of algebras $j_k\colon A(\P_{\leq x_k})\into A(\P_{\leq z})$ and inclusions of topological spaces $i_k\colon z\into x_k$. 
Then, given elements
$h_k\otimes a_k \in H^{p_k}(x_k;\Q)\otimes A^{\rk(x_k)}(\P_{\leq x_k})$
for $k=1,2$, 
their product in $E_2(\mathcal{A})$ is 
\[ (h_1\otimes a_1)\cdot (h_2\otimes a_2)
= (-1)^{p_2\rk(x_1)} \sum_{z\in x_1\vee x_2} \left( (i_1^*(h_1)\smile i_2^*(h_2)) \otimes (j_1(a_1)\cdot j_2(a_2)) \right), \]
where the products on the right-hand side are the cup product $\smile$ in $H^*(z;\Q)$ and the multiplication in $A(\P_{\leq z})$.
In particular, the subalgebras $E_2^{0,*}\cong A(\P)$ and $E_2^{*,0}\cong H^*(\G^r;\Q)$ together generate $E_2(\A)$ as an algebra, providing a surjective algebra homomorphism $\phi\colon H^*(\G^r;\Q)\otimes A(\P)\to E_2(\A)$. 
Now when $(x,T)\in\I$, $\chi\in{\Lambda}_x$, and $z\in H^1(\G)$, we have 
$\phi(\chi^*(z)\otimes e_{(x,T)})
= (i_x)^*(\chi^*(z)) \otimes e_{(x,T)}
= 0$
by \Cref{prop:K_x}. 
This means that $\phi$ induces a surjective algebra homomorphism on the quotient $B(\A)\to E_2(\A)$, which is necessarily an isomorphism since it is an isomorphism on bigraded vector spaces via the decompositions \eqref{eq:Edecomp} and \Cref{lem:Bvs}. 
\end{proof}

\subsection{Cohomology in the noncompact case}\label{sec:noncpt}
The main goal of this section is \Cref{thm:noncpt} relating our topological Orlik--Solomon algebra to the cohomology ring of the arrangement complement, when $\G$ is noncompact. 
We shall see that the Hilbert series of the two algebras agree, but the algebra structure is more subtle. 
The cohomology ring presentation which we compare to is given in \cite[Theorem 5.9]{BPP} and falls into two cases.
We will show in \Cref{thm:noncpt} that, in the case which excludes $\G\cong\G'\times\C^\times$ for some compact $\G'$, the cohomology ring is isomorphic to our topological Orlik--Solomon algebra $B(\A)$. Let us start by recalling the ring presentation in this case for reference.

\begin{theorem}\label{thm:BPP}\cite[Thm 5.9]{BPP}
Let $\G$ be a noncompact connected complex abelian Lie group where $\G\not\cong \G'\times\C^\times$ with $\G'$ compact. Let $\A$ be an arrangement in $\G^r$ with poset of layers $\P(\A)$ and associated matroid scheme $(S,\I)$.

Define the free $H^*(\G^r;\Q)$-module $R(\A)$ generated by $w_\alpha$ for $\alpha\in\I$, and endow $R(\A)$ with a ring structure by defining the multiplication
\begin{equation}\label{eqn:multi}\tag{I-II}
w_\alpha w_\beta=
\begin{cases}
\sgn(\alpha,\beta)\sum\limits_{\gamma\in\alpha\join\beta}w_\gamma & \text{when $\alpha\join\beta\subseteq\I$ and $\alpha\meet\beta=\{\hat{0}\}$,}\\
0 & \text{otherwise.}
\end{cases}
\end{equation}
The rational cohomology of the complement $H^*(\M(\A);\Q)$ is the quotient of $R(\A)$ by the following:
\begin{enumerate}
\item[\mylabel{eq:BPP2}{III}] $\displaystyle\sum_{a\in\at(\zeta)}\sgn(a,\omega\setminus a)w_{\omega\setminus a}$, when $\omega$ is $\zeta$-unicircular. 
\item[\mylabel{eq:BPP1}{IV}] $z\cdot w_{(x,T)}=0$, when $(x,T)\in\I$ and $z\in K_x:=\ker(H^*(G^r;\Q)\to H^*(x;\Q))$.
\end{enumerate}
\end{theorem}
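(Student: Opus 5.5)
The plan is to read the presentation off the Leray spectral sequence of \Cref{thm:Leray}. First I show that the sequence degenerates at $E_2$. Then I lift the $E_2$ generators to honest cohomology classes and resolve the multiplicative extension problem; the excluded case $\G\cong\G'\times\C^\times$ is exactly where that last step breaks.

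\textbf{Degeneration.} By \Cref{thm:Leray}, $E_2\cong B(\A)$, which is concentrated in bidegrees $(p,j(d-1))$. I would equip everything with mixed Hodge structures, since $\M(\A)\into\G^r$ is an algebraic map of smooth varieties when $\G$ is an algebraic group.
\begin{itemize}
\item Write $\G\cong(\C^\times)^{m}\times E\times\C^{n}$, where $E$ is a compact complex torus (an abelian variety, if we pass to the algebraic setting).
\item The degree-one classes of $\G^r$ have weight $1$ (from $E$) or weight $2$ (from $\C^\times$).
\item The local Orlik--Solomon classes $e_\alpha$ come from the localizations $\M_x$. These are complements of linear arrangements in $\C^{(d/2)\rk(x)}$, so they are pure of Tate type with weight $2\cdot\frac{d}{2}|\alpha|=d|\alpha|$.
\item Hence $E_2^{p,j(d-1)}$ has weights in a range determined by $p$ and $j$.
\end{itemize}
The differential $d_k$ has bidegree $(k,1-k)$ and is nonzero only when $k-1$ is a multiple of $d-1$. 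It is a morphism of mixed Hodge structures, so it vanishes whenever source and target have disjoint weights. I would check that the presence of a $\C$ factor, or of a $\C^\times$ factor together with the hypothesis, separates these weights, so that $E_2=E_\infty$. Comparing with the Hilbert series of \Cref{lem:Bvs} then gives $\dim H^*(\M(\A);\Q)$.

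\textbf{Lifting generators.} For each $\alpha=(x,T)\in\I$, I define a class $w_\alpha\in H^{|\alpha|(d-1)}(\M(\A);\Q)$ that restricts to $e_\alpha$ in $E_\infty^{0,*}$. Concretely, $w_\alpha$ is the pullback along $\M(\A)\into\M(\A|_x)$ of a chosen class on $\M(\A|_x)$ restricting to $e_\alpha$ near generic points of $x$.
\begin{itemize}
\item Noncompactness is used here. Whenever $\G$ has a $\C$ factor, or $\C^\times$ factors not isolated against a compact part, $\M(\A|_x)$ fibres over $x$ with fibre $\M_x$. This fibration has a section, or a retraction onto a fibre, which lets me pick a canonical lift of $e_\alpha$.
\item With these lifts, $R(\A)\to H^*(\M(\A);\Q)$ is a surjective map of $H^*(\G^r;\Q)$-modules, by the filtration argument.
\end{itemize}

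\textbf{Relations.} I then verify that the lifts satisfy the stated relations.
\begin{itemize}
\item Relation (IV) holds because $w_{(x,T)}$ is pulled back through $\M(\A|_x)$. The class $z\in K_x$ vanishes on $x$, and $\M(\A|_x)$ deformation retracts, up to the fibration above, onto something over $x$.
\item Relations (III) and the multiplication rule (I-II) hold locally by the classical Orlik--Solomon relations in each $A(\P_{\leq z})$.
\item To globalize them, I must show that the product $w_\alpha w_\beta$ has no correction terms from lower filtration, i.e.\ from higher $p$. I would do this by a weight or degree count: a correction term would need a pure Tate class of the wrong weight.
\end{itemize}
This gives a surjection from $R(\A)/(\text{III},\text{IV})$ onto $H^*(\M(\A);\Q)$. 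The source has the Hilbert series of $B(\A)$ by the same $\nbc$ argument as in \Cref{lem:Bvs}, so the surjection is an isomorphism.

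\textbf{Main obstacle.} The hard step is ruling out lower-filtration corrections in the products $w_\alpha w_\beta$, which is the multiplicative extension problem. In the case $\G'\times\C^\times$, the weight-$2$ classes from $\C^\times$ coincide in weight with the Tate classes $e_\alpha$ when $d=2$, and genuine corrections occur. So the argument must use the hypothesis sharply. I would first try to choose the lifts equivariantly with respect to the $\C$ or $\C^\times$ scaling action. Failing that, I would fall back on an explicit weight computation to force these corrections to vanish.
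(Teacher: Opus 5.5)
The paper does not prove this statement. It quotes it from \cite[Thm 5.9]{BPP}, and then combines it with the Hilbert series of \Cref{lem:Bvs} to obtain the ring isomorphism $B(\A)\cong H^*(\M(\A);\Q)$ in \Cref{thm:noncpt}. Your proposal is in effect an attempt to prove that ring isomorphism directly from the Leray spectral sequence. It has a genuine gap exactly at the step you call the main obstacle: the multiplicative extension problem is left open, and the weight argument you fall back on cannot close it. As \Cref{rmk:noncpt} records, weights separate the pieces $E_\infty^{n-j(d-1),j(d-1)}$ of $H^n(\M(\A);\Q)$ only for $\C^l$, for $(S^1\times S^1)^e\times\C^l$ with $l>0$, and for $(\C^\times)^t\times\C^l$ with $d>2$. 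Now take $\G$ with both a compact factor and $\C^\times$ factors, say $\G=E\times(\C^\times)^2$ with $E$ an elliptic curve; this case is not excluded, and there these pieces have overlapping weights. Even the Hodge type $(jd/2,jd/2)$ of a product $w_\alpha w_\beta$ of two $p=0$ classes already occurs in the lower pieces $E_\infty^{m(d-1),(j-m)(d-1)}$. To see this, use $m$ classes from the $\C^\times$ factors and $m(d-2)$ classes from $E$, split evenly between types $(1,0)$ and $(0,1)$, which is possible once $r$ is large. So no weight or type count forces the corrections to vanish, and ``choose the lifts equivariantly'' is a hope, not an argument. Note also that the weights overlap in the same way for the excluded $E\times\C^\times$ and the allowed $E\times(\C^\times)^2$. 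So the hypothesis $\G\not\cong\G'\times\C^\times$ is not what makes a weight argument work, and your $d=2$ explanation of the exclusion only covers $\G=\C^\times$.

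The steps feeding into the extension problem are not in place either.

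\emph{Degeneration.} In the same mixed cases the source and target of $d_r$ have overlapping weight ranges, so the separation you plan to ``check'' is not available. The paper gets $E_2=E_\infty$ differently, by comparing the Hilbert series of \Cref{lem:Bvs} with the Poincar\'e polynomial of \cite[Thm 7.7]{LTY}.

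\emph{Lifting generators.} $\M(\A|_x)$ does not fibre over $x$ with fibre $\M_x$. After an automorphism of $\G^r$ it is $x\times\M(\bar{\A})$, where $\bar{\A}$ is the induced arrangement in $\G^r/x\cong\G^{\rk(x)}$. This is a global complement, not a local one: in \Cref{ex:arrangement} with $x=u$ one has $\A|_u=\A$. So surjectivity of $H^{\rk(x)(d-1)}(\M(\A|_x);\Q)\to H^{\rk(x)(d-1)}(\M_x;\Q)$ is again the question of whether $E_2^{0,*}$ consists of permanent cycles, and no section supplies it.

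\emph{Relation (IV).} For an arbitrary lift and $z\in K_x$, the product $z\cdot w_{(x,T)}$ vanishes only in $E_\infty^{1,*}$. It may have components in $E_\infty^{m(d-1)+1,(\rk(x)-m)(d-1)}$ for $m\ge 1$, so (IV) is part of the same extension problem.

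To close the gap you need an actual construction of classes $w_\alpha$ satisfying (I-II), (III) and (IV) on the nose. That is the input \cite{BPP} provides, and the paper imports it rather than deriving it from the spectral sequence.
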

Several remarks about the above cited theorem are in order.
Although we state it with rational coefficients for our purposes, the presentation holds over the integers. 
We also omit the case $\G\cong\G'\times\C^\times$ with $\G'$ compact because the analogous relation \eqref{eq:BPP2} becomes more complicated, making the ring structure appear quite different from our Orlik--Solomon algebra.
Lastly, we note that the sign $\sgn(a,\omega\setminus a)$ in \eqref{eq:BPP2} is a correction of the sign $\sgn(a,\zeta\setminus a)$ written in \cite[Thm 5.9]{BPP}, and we use \Cref{prop:rank-uni} to write this relation in terms of unicircular elements.

\begin{theorem}[Cohomology in noncompact case]\label{thm:noncpt}
Let $\G$ be a noncompact connected complex abelian Lie group and $\mathcal{A}$ an arrangement in $\G^r$ with complement $\M(\mathcal{A})$. 
There is a filtration $F$ on the rational cohomology of $\M(\mathcal{A})$ whose associated graded algebra $\mathrm{gr}_FH^*(\M(\mathcal{A});\Q)$ is isomorphic to $B(\mathcal{A})$ as a graded algebra.
Except when $\G\cong \G'\times\C^\times$ with $\G'$ compact, this filtration is trivial and the rational cohomology of $\M(\mathcal{A})$ is isomorphic as a graded algebra to $B(\mathcal{A})$.
\end{theorem}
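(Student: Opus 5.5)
The plan is to run the Leray spectral sequence of \Cref{thm:Leray} for $f\colon\M(\A)\into\G^r$ and show that it degenerates at $E_2$ when $\G$ is noncompact. Write $\G\cong(S^1)^a\times\R^b$ with $b\geq 1$ and $d=a+b$ even, so $E_2^{p,q}$ is nonzero only when $(d-1)\mid q$.

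The differentials $d_k$ with $d-1\nmid k-1$ vanish automatically. For the remaining ones, compare dimensions: by \Cref{lem:Bvs}, the total Hilbert series of $B(\A)\cong E_2$ is
\[
\sum_{x\in\P}(1+t)^{a(r-\rk(x))}(-1)^{\rk(x)}\mu(x)\,t^{\rk(x)(d-1)}.
\]
The Poincar\'e polynomial of $\M(\A)$ can be read off from the additive structure of the presentation in \Cref{thm:BPP} (and its companion case in \cite{BPP}). That ring is a free $H^*(\G^r)$-module on the $w_\alpha$, modulo \eqref{eq:BPP2} and \eqref{eq:BPP1}; by the same nbc argument as in \Cref{lem:Bvs}, it decomposes as $\bigoplus_{(x,T)\in\nbc} H^*(x;\Q)\,w_{(x,T)}$. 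The two dimensions agree, and since every differential can only lower total dimension, $E_2=E_\infty$. The Leray filtration $F$ on $H^*(\M(\A);\Q)$ then has $\mathrm{gr}_F H^*(\M(\A);\Q)\cong E_\infty\cong B(\A)$ as bigraded algebras, since the multiplicative structure of the spectral sequence is compatible with cup product. This proves the first assertion in all noncompact cases.

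For the second assertion (excluding $\G\cong\G'\times\C^\times$ with $\G'$ compact), I would build an explicit algebra map $B(\A)\to H^*(\M(\A);\Q)$ using the presentation of \Cref{thm:BPP}. Send $e_\alpha\mapsto w_\alpha$ and $H^*(\G^r)$ identically. By \Cref{rmk:geom-joinindep}, relations \eqref{OSi} and \eqref{OSii} become exactly the multiplication rule \eqref{eqn:multi}. Relation \eqref{OSiii} is \eqref{eq:BPP2}, and relation \eqref{Biv} matches \eqref{eq:BPP1} via \Cref{prop:K_x}\eqref{prop:K_x:1}. Hence there are mutually inverse, well-defined maps in both directions. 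The main care point is that $R(\A)$ is defined as a free module with a product, whereas $B(\A)$ is a quotient of a free graded-commutative algebra. So I need to check that every relation in $R(\A)$, in particular $w_\alpha w_\beta=0$ when $\alpha\meet\beta\neq\{\zero\}$ or the join is not independent, and graded commutativity with the correct signs $(-1)^{|\alpha||\beta|(d-1)}$, holds in $B(\A)$ and conversely. This reduces to \Cref{lem:signs}. Finally, the $w_\alpha$ and $H^*(\G^r)$ generate both algebras, which gives surjectivity of the inverse.

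It remains to see that the filtration is then trivial, meaning the isomorphism is compatible with the Leray grading. The isomorphism respects the bigrading: $w_\alpha$ lives in $F$-level $0$ and pulls back from $E_2^{0,*}$, while $H^*(\G^r)$ is $E_2^{*,0}$. So the filtration splits, and the associated graded is canonically the ring itself.

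The main obstacle I expect is the degeneration step. It relies on having the total Betti numbers of $\M(\A)$ from \cite{BPP} in the $\C^\times$-type case, where the presentation differs. If that is unavailable additively, I would instead argue via weights or purity as in \cite{totaro}: the noncompact factor $\R^b$ lets one show the $d_k$ with $k=j(d-1)+1$ must vanish because they change the mixed Hodge weight incompatibly.
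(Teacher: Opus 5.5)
\textbf{The gap.} The first assertion is not established in the exceptional case $\G\cong\G'\times\C^\times$ with $\G'$ compact. This is exactly the case where it does not follow from the second assertion, so it is the only case where the degeneration argument carries real content. You correctly flag this as the main obstacle, but neither resolution you offer works.

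To force $E_2=E_\infty$ you need $\dim H^n(\M(\A);\Q)=\sum_{p+q=n}\dim B^{p,q}(\A)$. In the companion case of \cite{BPP}, the relation replacing \eqref{eq:BPP2} has a genuinely different form. So the Gr\"obner/$\nbc$ argument behind \Cref{thm:gb} and \Cref{lem:Bvs} does not transfer, and you give no substitute basis theorem.

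The weight fallback is false as soon as $\G'$ is nontrivial. Take $\G=E\times\C^\times$ with $E$ an elliptic curve, so $d=4$. Then $H^1(\G;\Q)$ has weights $1$ and $2$, so $E_2^{p,q}$ is mixed. A differential $d_{m(d-1)+1}$ is weight-compatible on any summand where it raises the number of weight-$2$ factors by $m-1$. For instance, with $r=2$, the map $d_4\colon E_4^{0,3}=E_2^{0,3}\to E_4^{4,0}=H^4(\G^2;\Q)$ goes from a space pure of weight $4$ to one with nonzero weight-$4$ part. This particular $d_4$ does vanish, but for a different reason: the Gysin class of $H_k$ is pulled back from $H^d(\G;\Q)=0$. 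Nothing you say controls $d_7,d_{10},\dots$, and weights alone force degeneration here only when $\G=\C^\times$ itself.

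The missing ingredient is an independent count of Betti numbers. The paper takes the Poincar\'e polynomial of $\M(\A)$ for arbitrary noncompact $\G$ from \cite[Thm 7.7]{LTY} and observes that it equals the Hilbert series in \Cref{lem:Bvs}. This gives $E_2=E_\infty\cong\mathrm{gr}_F H^*(\M(\A);\Q)$ uniformly, and your dimension argument then goes through verbatim.

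\textbf{The non-exceptional case.} Outside the exceptional case your argument is sound, and slightly more direct than the paper's. The paper builds only the surjection $\phi_B\colon B(\A)\to H^*(\M(\A);\Q)$ and concludes by comparing Hilbert series. Your mutually inverse maps, via \Cref{rmk:geom-joinindep} and \Cref{prop:K_x}, give the algebra isomorphism outright, and degeneration there then follows by counting.

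\textbf{The filtration paragraph.} Your paragraph on the filtration ``splitting'' should be dropped. Nothing you have identifies the generators $w_\alpha$ of \cite{BPP} with the classes $e_\alpha$ under the edge map to $E_2^{0,*}$. The paper establishes the second assertion only through the algebra isomorphism, so that paragraph is not needed.
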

\begin{proof}
We know from \Cref{thm:Leray} that $B(\A)$ is isomorphic to the second page of the Leray spectral sequence. 
Additionally, the Hilbert series of $B(\A)$, given in \Cref{lem:Bvs}, matches the Poincar\'e polynomial of $\M(\A)$, given in \cite[Thm 7.7]{LTY}, when $\G$ is noncompact.
Together, this implies that $B(\A)\cong E_2=E_\infty=\mathrm{gr}_FH^*(\M(\A);\Q)$ where $F$ is the Leray filtration.

To prove the second claim, we exhibit an algebra isomorphism using the cohomology presentation in \Cref{thm:BPP}. 
We start with the surjective algebra map $\phi_E\colon H^*(\G^r;\Q)\otimes E(\P)\to H^*(\M(\A);\Q)$ given by $1\otimes e_\alpha\mapsto w_\alpha$.
Using \Cref{rmk:geom-joinindep} to compare relations \eqref{eqn:multi} and \eqref{eq:BPP2} in $H^*(\M(\A);\Q)$ with the defining ideal of $A(\P)$ in \Cref{defn:OS}, the map $\phi_E$ induces a surjective algebra homomorphism $\phi_A\colon H^*(\G^r;\Q)\otimes A(\P)\to H^*(\M(\A);\Q)$. Using \Cref{prop:K_x} to compare the defining ideal of $B(\A)$ in \Cref{def:B} with \eqref{eq:BPP1}, we conclude that $\phi_A$ induces a surjective algebra homomorphism $\phi_B\colon B(\A)\to H^*(\M(\A);\Q)$. 
Since the Hilbert series of $B(\A)$ matches that of $H^*(\M(\A);\Q)$, the map $\phi_B$ is necessarily an isomorphism of algebras.
\end{proof}

\subsection{Cohomology in the compact case}\label{sec:cpt}
Less is known about the cohomology ring in the compact case, and we show here how our topological Orlik--Solomon algebra is a tool for computing the cohomology. We equip $B(\A)$ with a differential $\delta$, making it a differential graded algebra, and show that this 
is the only nontrivial differential in the Leray spectral sequence, so that
\begin{equation}\label{eq:H*iso}
H^*(\M(\A);\Q)\cong H^*(B(\A),\delta).
\end{equation}
The proof uses the fact that the Leray spectral sequence is compatible with the mixed Hodge structure. In fact, this Hodge theory perspective allows us to conclude that the isomorphism in \eqref{eq:H*iso} is in fact an isomorphism of \emph{algebras}. 
While our methods follow that of Totaro \cite{totaro} for configuration spaces and \cite{bibby-elliptic} for $\G$ a complex elliptic curve, both of those papers only encountered geometric lattices in the combinatorics.

\begin{theorem}[OS model for cohomology]\label{thm:cpt}
Let $\G$ be a compact connected complex abelian Lie group with real dimension $d$. Let $\A$ be an arrangement in $\G^r$ defined by primitive elements $\chi_1,\dots,\chi_n\in\Z^r$, and write $\chi_k=(\chi_k[1],\dots,\chi_k[r])$.
The rational cohomology of the arrangement complement $\M(\mathcal{A})$ is isomorphic as a graded algebra to the cohomology of $B(\mathcal{A})$ with respect to the differential $\delta$ of bidegree $(d,1-d)$ determined by $\delta(z_j[i])=0$ for $1\leq j\leq d$, $1\leq i\leq r$, and for $(x,T)$ independent
\begin{equation}\label{eq:delta}
\delta(e_{(x,T)}) = \sum_{k\in T} \frac{\sgn(k,T\setminus k)}{c(T,k)}\left(\sum_{i=1}^r \chi_k[i]z_1[i]\right)\cdots\left(\sum_{i=1}^r\chi_k[i]z_d[i]\right)e_{(x,T)\setminus k},
\end{equation}
where $c(T,k)=\frac{|\vee T|}{|\vee(T\setminus k)|}$ considering $T$ as a set of atoms in the poset $\P(\A)$.
\end{theorem}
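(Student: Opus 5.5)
We will follow Totaro's strategy. The starting point is \Cref{thm:Leray}, which identifies the $E_2$ page of the Leray spectral sequence for $f\colon\M(\A)\into\G^r$ with $B(\A)$.

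\textbf{Step 1: only $d_d$ can be nonzero.} Since $\G$ is compact and complex, $\G^r$ is a compact complex torus. The term $E_2^{p,q}$ is nonzero only when $q=j(d-1)$, so the only possibly nonzero differentials are $d_{k(d-1)+1}$. We use the weight argument. Choose an algebraic (or Kähler) structure; in general, use Deligne's mixed Hodge theory for compact Kähler targets as in \cite{totaro,bibby-elliptic}. Then $E_2^{p,j(d-1)}\cong\bigoplus_x H^p(x)\otimes A^j(\P_{\le x})$ is pure of weight $p+jd$, because $H^p(x)$ is pure of weight $p$ and $H^{j(d-1)}(\M_x)$ is pure of weight $jd$, the localization being a complement of codimension-$d/2$ complex subspaces. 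The differential $d_r$ has bidegree $(r,1-r)$ and preserves weights. This forces $p+r+(j(d-1)+1-r)\cdot\frac{d}{d-1}=p+jd$, which holds only for $r=d$. Hence $E_{d+1}=E_\infty$. Purity also makes the Leray filtration split canonically via the weight filtration, so $\mathrm{gr}$ equals the cohomology as algebras. This gives $H^*(\M(\A);\Q)\cong H^*(E_d,d_d)=H^*(B(\A),d_d)$ as graded algebras.

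\textbf{Step 2: $d_d$ is a derivation determined on generators.} Because $E_2$ is generated by $E_2^{*,0}=H^*(\G^r)$ and $E_2^{0,*}=A(\P)$, and $d_d$ is a derivation vanishing on the bottom row, it suffices to compute $d_d(e_{(x,T)})$ for independent $(x,T)$. We will first reduce to the case $|T|=1$, where $d_d(e_k)$ is the transgression of the degree-$(d-1)$ class $e_k$. That transgression equals the Thom/fundamental class of the codimension-$d$ subtorus $H_k$ in $H^d(\G^r)$, namely $\prod_{j=1}^d\chi_k^*(z_j)=\prod_j(\sum_i\chi_k[i]z_j[i])$ (with the paper's normalization of $z_j$ as a basis of $H^1(\G)$ of dual-volume one). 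This is a standard Gysin computation for the pair $(\G^r,\G^r\setminus H_k)$, using that $\chi_k$ is primitive so that $H_k$ is connected. For general $(x,T)$ we cannot simply use the Leibniz rule, since $e_{(x,T)}$ is not generally a product of atoms; indeed $e_Te$ is only $\sum_{y\in\vee T}e_{(y,T)}$. Instead we compute $d_d$ locally. Near $x$, the sheaf summand $A^j(\P_{\le x})\otimes(i_x)_*\Q_x$ maps to $(i_{x'})_*$ pieces with $x'\in\vee(T\setminus k)$, and the Gysin/restriction for the inclusion $x\into x'$ contributes the class of $x$ in $x'$. Pulled back, that class is the product class divided by the number of components $|\vee T|/|\vee(T\setminus k)|=c(T,k)$. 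Equivalently, use the relation $e_k e_{(x',T\setminus k)}=\sgn\sum_{y\in\vee T}e_{(y,T)}$ and the symmetry among the $c(T,k)$ components, which are translates of each other and so receive equal shares. The sign $\sgn(k,T\setminus k)$ comes from the Leibniz rule applied to this product.

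\textbf{Step 3: well-definedness.} We will check that \eqref{eq:delta} is compatible with relations \eqref{OSi}--\eqref{Biv}, so that $\delta$ is a well-defined differential on $B(\A)$ agreeing with $d_d$. Note that $\delta^2=0$ is automatic from $d_d$. The check uses the sign identities of \Cref{lem:signs} and the fact that $\chi_k\in\Lambda_{x}$ kills the classes in \eqref{Biv}.

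\textbf{Main obstacle.} We expect the hardest part to be Step 2: justifying the equal splitting with factor $1/c(T,k)$ across components of $\vee T$. We would try to derive it by applying $d_d$ to relation \eqref{OSii} for $e_k\cdot e_{(x',T\setminus k)}$. On the right-hand side the $c(T,k)$ terms are permuted transitively by translations of $\G^r$ preserving $\A|_{x'}$, and this translation-equivariance is what should force equal coefficients.
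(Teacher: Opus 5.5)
Your proposal is correct and follows essentially the same route as the paper: you identify $E_2$ with $B(\A)$, use the degree constraint together with purity of $E_2^{p,j(d-1)}$ in weight $p+jd$ to show that only $d_d$ survives and that the Leray filtration coincides with the weight filtration, and you compute $d_d$ through Gysin maps using $[H_k]_{\G^r}=\chi_k^*(z_1\cdots z_d)$ and $c(T,k)[x]_y=[y\cap H_k]_y$. Your Step 3 and the translation-equivariance argument in your ``main obstacle'' are unnecessary: $\delta$ is just $d_d$ transported to $B(\A)$, so well-definedness and $\delta^2=0$ are automatic, and, as you already note, the factor $1/c(T,k)$ follows directly because the restriction of $[H_k]$ to $y$ is the sum of the $c(T,k)$ equal classes of the translates of $x$.
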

\begin{proof}
From Theorem \ref{thm:Leray}, we know that $B(\mathcal{A})$ is isomorphic as a graded algebra to $E_2(\mathcal{A})$, the second page of the Leray spectral sequence. 
Since $E_2^{p,q}$, hence also $E_r^{p,q}$, vanishes unless $q=j(d-1)$ for some $j$, the differential on the $r$th page is trivial unless $d-1$ divides $r-1$. In particular, the first nontrivial differential is on the $d$th page, and $E_d=E_2\cong B(\A)$. 
This differential is determined by Gysin maps
\begin{align*} 
E_2^{0,j(d-1)}\cong \bigoplus_{\rk(x)=j} H^0(x;\Q)\otimes A^j(\P_{\leq x}) 
&\longrightarrow 
\bigoplus_{\rk(y)=j-1} H^d(y;\Q)\otimes A^j(\P_{\leq y}) \cong E_2^{d,(j-1)(d-1)} \\
1\otimes e_{(x,T)} &\mapsto \sum_{(y,T\setminus k)<(x,T)} \sgn(k,T\setminus k) [x]_y\otimes e_{(y,T\setminus k)} 
\end{align*}
and we thus aim to describe the  classes $[x]_y\in H^d(y;\Q)$ explicitly.
Firstly, the class of $H_k=\chi_k^{-1}(g_i)$ in $H^d(\G^r;\Q)$ may be expressed
as
\[ [H_k]_{\G^r} = \chi_k^*(z_1\cdots z_d) = 
\left(\sum_{i=1}^r \chi_k[i]z_1[i]\right)\cdots\left(\sum_{i=1}^r\chi_k[i]z_d[i]\right),
\]
which determines $\delta(e_{(H_k,k)})$.
Now, suppose $(x,T)$ is independent, $k\in T$, and $y\subseteq x$ is a connected component of $\cap_{i\in T\setminus k} H_i$. 
Since the connected components of $\cap_{i\in T}H_i$ correspond to the elements in $\vee T\subseteq\P(\A)$, and similarly for $T\setminus k$, the intersection $y\cap H_k$ has 
$c(T,k)=|\vee T|/|\vee(T\setminus k)|$ connected components, each of which is a translate of $x$.
Then $c(T,k)[x]_y=[y\cap H_k]_y\in H^d(y;\Q)$ is equal to the image of $[H_k]_{\G^r}$ through the restriction $H^*(\G^r)\to H^*(y)$,
from which we obtain the formula \eqref{eq:delta}.

We next show that all higher differentials vanish using mixed Hodge theory.
Note that $E_2^{p,q}\cong H^p(x;\Q)\otimes H^{j(d-1)}(\M_x;\Q)$ is pure of weight $p+jd$, since $H^p(x;\Q)$ is pure of weight $p$ by compactness and $H^{j(d-1)}(\M_x;\Q)$ is pure of weight $jd$ by \cite[Prop. 8.6]{CH}. 
Now the differential on the $r$th page, with $r=k(d-1)+1$, maps $E_r^{p,j(d-1)}$ of weight $p+jd$ to $E_r^{p+k(d-1)+1,(j-k)(d-1)}$ of weight $p+jd-k+1$. As the differential must preserve weights, it is necessarily trivial unless $k=1$, that is, $r=d$. Hence, $E_{d+1}=E_\infty$.

Thus, $H^*(B(\A),\delta)$ is isomorphic to the associated graded of $H^*(\M(\A);\Q)$ with respect to the Leray filtration. But, since the components $E_\infty^{p,j(d-1)}$ contributing to $H^{p+j(d-1)}(\M(\A);\Q)$ are pure of distinct weights, the Leray filtration coincides with the weight filtration. The associated graded with respect to the weight filtration is isomorphic to $H^*(\M(\A);\Q)$ itself, completing the proof.
\end{proof}

\begin{example}
The differential $\delta$ on the algebra $B(\A)$ from \Cref{ex:ellipticB} is determined by
\begin{align*}
\delta(e_1) &= z_1[1]z_2[1], \qquad
\delta(e_2) = z_1[2]z_2[2], \qquad
\delta(e_3) = (2z_1[1]-z_1[2])(2z_2[1])-z_2[2]),\\
\delta(e_{(p_1,12)}) &= z_1[1]z_2[1]e_2-z_1[2]z_2[2]e_1, \\
\delta(e_{(p_1,13)}) &= z_1[1]z_2[1]e_3-(2z_1[1]-z_1[2])(2z_2[1])-z_2[2])e_1 = z_1[1]z_2[1]e_3-z_1[2]z_2[2]e_1,\\
\delta(e_{(p_i,23)}) &= \frac{1}{4}z_1[2]z_2[2]e_3-\frac{1}{4}(2z_1[1]-z_1[2])(2z_2[1])-z_2[2])e_2
= z_1[1]z_2[1]e_3-z_1[1]z_2[1]e_2,
\end{align*}
where $1\leq i\leq 4$.
We can then compute the  Poincar\'e polynomial 
\[ \sum_{n\geq 0} \dim H^n(\M(\A);\Q) t^n = 1+4t+8t^2 \]
with the following elements representing a basis for the cohomology $H^*(\M(\A);\Q)$
\begin{align*}
& 1,\quad z_1[1],\quad z_2[1],\quad z_1[2],\quad z_2[2], \quad
 z_1[1]z_1[2],\quad z_2[1]z_2[2],\quad z_1[1]z_2[2]\\
& 2z_1[2]e_1+z_1[1]e_2-z_1[1]e_3,\quad 2z_2[2]e_1+z_2[1]e_2-z_2[1]e_3 \\
& e_{(p_1,12)}-e_{(p_1,13)}+e_{(p_2,23)},\quad e_{(p_1,12)}-e_{(p_1,13)}+e_{(p_3,23)},\quad e_{(p_1,12)}-e_{(p_1,13)}+e_{(p_4,23)} 
\end{align*}
\end{example}

\begin{remark}\label{rmk:noncpt}
At the end of the proof of \Cref{thm:cpt}, we were able to recover the algebra structure on $H^*(\M(\A);\Q)$ from the spectral sequence because the Leray filtration coincided with the weight filtration. This argument could extend to recover the algebra structure in the noncompact case (as in \Cref{thm:noncpt}) if the terms contributing to $H^n(\M(\A);\Q)$ had pure distinct weights. To see when this works, suppose $\G=(S^1\times S^1)^e\times(\C^\times)^t\times\C^l$ and let $d=\dim\G=2(e+t+l)$.
\begin{itemize}
\item When $e=t=0$ and $l>0$, then $H^q(\M(\A);\Q)\cong  
E_2^{0,q}$ and is pure weight $jd$ where $q=j(d-1)$.
\item When $e>0$, $t=0$, and $l>0$, then the term
$E_2^{n-j(d-1),j(d-1)}$ has pure weight $n+j$.
\item When $e=0$, $t>0$, $l\geq 0$, and $d>2$, then the term
$E_2^{n-j(d-1),j(d-1)}$ has weight $2n+j(-d+2)$.
\end{itemize}
In these three cases, then, one has $B(\A)\cong H^*(\M(\A);\Q)$, and the Hilbert series formula of \Cref{lem:Bvs} can be refined to encode the mixed Hodge numbers.
The argument fails in other noncompact cases because the terms contributing to $H^n(\M(\A);\Q)$ do not have distinct weights. Although we do not expect an algebra isomorphism in the case $e\geq0$, $t=1$, $l=0$, there is still an algebra isomorphism in the case 
$t+l>1$, as per \Cref{thm:noncpt}.
\end{remark}

\bibliographystyle{alpha}
\bibliography{biblio}
\end{document}